\newcommand{\mc}{\mathcal}
\renewcommand{\Re}{\mathrm{Re}\,}
\renewcommand{\Im}{\mathrm{Im}\,}
\newcommand{\N}{\mathbb{N}}
\newcommand{\R}{\mathbb{R}}
\newcommand{\C}{\mathbb{C}}
\newcommand{\Z}{\mathbb{Z}}
\renewcommand{\S}{\mathbb{S}}
\newcommand{\I}{\,\mathrm{i}\,}
\newcommand{\sgn}{\mathrm{sgn}\,}
\newtheorem{lemma}{Lemma}[section]
\newtheorem{proposition}[lemma]{Proposition}
\newtheorem{theorem}[lemma]{Theorem}
\newtheorem{corollary}[lemma]{Corollary}
\theoremstyle{remark}
\newtheorem{remark}[lemma]{Remark}
\theoremstyle{definition}
\newtheorem{definition}[lemma]{Definition}
\numberwithin{equation}{section}
\title[Spectral mapping]{A spectral mapping theorem for perturbed Ornstein-Uhlenbeck operators
on $L^2(\R^d)$}
\author{Roland Donninger}
\address{Rheinische Friedrich-Wilhelms-Universit\"at Bonn,
Mathematisches Institut, Endenicher Allee 60, D-53115 Bonn, Germany}
\email{donninge@math.uni-bonn.de}
\author{Birgit Sch\"orkhuber}
\address{Fakult\"at f\"ur Mathematik,
Universit\"at Wien, Oskar-Morgenstern-Platz 1, A-1090 Vienna, Austria}
\email{birgit.schoerkhuber@univie.ac.at}
\thanks{Roland Donninger is supported by the Alexander von Humboldt Foundation via
a Sofja Kovalevskaja Award endowed by the German Federal Ministry of Education
and Research. Birgit Sch\"orkhuber is supported by the Austrian Science Fund
(FWF) via the Hertha Firnberg Program, Project Nr. T739-N25.}
\begin{document}
\begin{abstract}
We consider Ornstein-Uhlenbeck operators on $L^2(\R^d)$ perturbed by a radial potential $V$.
Under weak assumptions on $V$ we prove a spectral mapping theorem for the 
generated semigroup. The proof relies on a perturbative construction of the resolvent,
based on angular separation, and the Gearhart-Pr\"u\ss{} Theorem.
\end{abstract}

\maketitle

\section{Introduction}
\noindent We consider a class of operators generated by the formal
differential expression
\begin{equation}
\label{eq:L}
 \mc L_V u(x):=\Delta u(x)-2 x\cdot \nabla u(x)+V(|x|)u(x) 
 \end{equation}
for $u:\R^d\to\C$ with a complex-valued radial 
potential $V: [0,\infty)\to \C$.
The study of elliptic and parabolic problems with unbounded coefficients is motivated by
many applications in science, engineering, and economics. 
Operators as in \eqref{eq:L} are prototypes of this kind and attracted
a lot of interest in the mathematical literature. We refer to the monograph \cite{BerLor07} for recent developments 
in this field.

A natural space for the analysis of $\mc L_V$ is the weighted $L^2_w(\R^d)$ with the Gau\ss ian
weight $w(x)=e^{-|x|^2}$. The reason for this is that the \emph{free operator}
$\mc L_0$ is symmetric on $L^2_w(\R^d)$.
However, in this paper we consider $\mc L_V$ on $L^2(\R^d)$ without weight,
which is motivated in the following.

With a suitable domain (see below) the formal expression $\mc L_V$ has a realization
as an unbounded operator $L$ on $L^2(\R^d)$
which generates a strongly continuous one-parameter semigroup $\{S(t): t\geq 0\}$.
This shows that the $L^2$-setting without weight is very natural, too.
The operator $L$ is highly non-self-adjoint and the complex half-plane
$\{z\in \C: \Re z\leq d\}$ is contained in its spectrum.
Thus, $L$ has in some sense the worst possible spectral structure that still allows for the 
generation of a semigroup. This makes the analysis of $S(t)$ mathematically interesting
and challenging since the application of general ``soft'' arguments is largely precluded.
Furthermore, besides the well-known applications to probability and mathematical finance,
operators of the form \eqref{eq:L} occur very naturally in the study of self-similar
solutions to nonlinear parabolic equations.
To see this, consider for instance the equation
\[ \partial_t u(t,x)=\Delta_x u(t,x)+F(u(t,x),|x|) \]
where $F$ is some given nonlinearity that allows for the existence of a radial self-similar
solution of the form $u_0(t,x)=(1-t)^{-\beta} f(\frac{|x|}{\sqrt{1-t}})$.
In order to analyse the stability of $u_0$, it is standard to introduce
\emph{similarity coordinates} $\tau=-\log(1-t)$, $\xi=\frac{x}{\sqrt{1-t}}$.
If $F$ scales suitably, the change of variables 
$(t,x)\mapsto (\tau,\xi)$
leads to an equation of the form
\begin{align*} \partial_\tau \tilde u(\tau,\xi)
=&\Delta_\xi \tilde u(\tau,\xi)-\tfrac12 \xi\cdot \nabla_\xi 
\tilde u(\tau,\xi)-\beta \tilde u(\tau,\xi) \\
&+\partial_1 F\left (f(|\xi|),|\xi|\right )\tilde u(\tau,\xi)+\mbox{nonlinear terms}
\end{align*}
for $\tilde u(\tau,\xi)=(1-t)^\beta[u(t,x)-u_0(t,x)]$. 
Consequently,
the linear part on the right-hand side is 
an Ornstein-Uhlenbeck
operator as in \eqref{eq:L}.
In such a situation one is naturally led to the unweighted setting since
Sobolev spaces with \emph{decaying} Gau\ss ian weights are in general
not suitable to study nonlinear
problems. 

As usual, the important question for applications is whether one can derive
growth estimates
for the semigroup $S(t)$ by merely looking at the spectrum of $L$, which is typically the only
accessible information.
In the present paper we answer this question in the affirmative for
the class of operators defined by Eq.~\eqref{eq:L}.
We prove the strongest possible result in this context, namely that the spectrum
of $S(t)$ is completely determined by the spectrum of $L$.

\begin{theorem}[Spectral mapping for Ornstein-Uhlenbeck operators]
\label{thm:main}
Suppose $V: [0,\infty)\to \C$ satisfies $|V(r)|\leq C\langle r\rangle^{-2}$, 
$|V'(r)|\leq C\langle r\rangle^{-3}$ for all $r\geq 0$ and some constant $C>0$.
Set
\[ \mc D(\tilde L):=\{u\in H^2(\R^d)\cap C^\infty(\R^d): \mc L_V u \in L^2(\R^d)\}, \]
and define $\tilde L: \mc D(\tilde L)\subset L^2(\R^d)\to L^2(\R^d)$ by $\tilde Lu:=\mc L_V u$.
Then $\tilde L$ is densely defined, closable, and its closure $L$ generates a 
strongly continuous one-parameter
semigroup $\{S(t): t\geq 0\}$ of bounded operators on $L^2(\R^d)$ such that the spectral mapping
\begin{equation*}
 \sigma(S(t))\backslash \{0\}=\{e^{t\lambda}: \lambda \in \sigma(L)\} 
 \end{equation*}
holds for all $t\geq 0$.
\end{theorem}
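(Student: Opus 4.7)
The statement has two components: the generation of $\{S(t)\}$ by a closure of $\tilde L$, and the spectral mapping identity. For the first, I would observe that $C_c^\infty(\R^d)\subset\mc D(\tilde L)$, so $\tilde L$ is densely defined, and derive the one-sided estimate
\[ \Re\langle \tilde Lu,u\rangle\le (d+\|V\|_\infty)\|u\|^2 \]
by integration by parts---the drift $-2x\cdot\nabla$ contributes $+d\|u\|^2$, $\Delta$ contributes $-\|\nabla u\|^2\le 0$, and the bounded potential $V$ gives the remainder. Together with a range-density check for $\lambda-\tilde L$, Lumer--Phillips then yields closability and generation of the semigroup by the closure $L$. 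The genuinely non-trivial part is the spectral mapping, for which the plan is to invoke the Gearhart--Pr\"u\ss{} theorem: on a Hilbert space this characterises the resolvent set of $S(t)$ in terms of uniform vertical-line bounds on $(z-L)^{-1}$. Since the spectral inclusion $e^{t\sigma(L)}\subset\sigma(S(t))$ is automatic, it suffices to show that whenever $\{z:\Re z=\omega\}\cap\sigma(L)=\emptyset$ one has $\sup_{\Re z=\omega}\|(z-L)^{-1}\|<\infty$.

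The radial symmetry of $V$ is the decisive structural feature. I would decompose
\[ L^2(\R^d)=\bigoplus_{\ell\ge 0} L^2((0,\infty),r^{d-1}dr)\otimes\mc H_\ell, \]
where $\mc H_\ell$ is the space of spherical harmonics of degree $\ell$, so that $L$ preserves each summand and reduces there to the one-dimensional radial operator
\[ L_\ell f(r)=f''(r)+\Bigl(\tfrac{d-1}{r}-2r\Bigr)f'(r)-\tfrac{\ell(\ell+d-2)}{r^2}f(r)+V(r)f(r). \]
This \emph{angular separation} turns the resolvent problem into a family of second-order linear ODEs. For the free operator I would solve $(z-L_\ell^0)f=g$ explicitly by variation of parameters using two fundamental solutions whose behaviour at $0$ and $\infty$ is governed by confluent hypergeometric (Kummer and Tricomi) functions with parameters depending on $z$ and $\ell$. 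The perturbed resolvent is then constructed as the Neumann-type series
\[ (z-L_\ell)^{-1}=(z-L_\ell^0)^{-1}\sum_{n=0}^\infty\bigl[V(z-L_\ell^0)^{-1}\bigr]^n \]
wherever it converges, and extended to the complement of the essential spectrum by Fredholm analytic continuation; its only singularities are the eigenvalues of $L_\ell$. Since the centrifugal term $\ell(\ell+d-2)/r^2$ is nonnegative it only improves the coercivity, so the resulting bounds should be uniform in $\ell$ and reassemble to a bound on $(z-L)^{-1}$.

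The hard part---and the genuine content of Theorem~\ref{thm:main}---will be controlling $\|(z-L)^{-1}\|$ as $|\Im z|\to\infty$ along a fixed vertical line. The Ornstein--Uhlenbeck semigroup is hypercontractive and highly non-normal, so no soft argument produces decay of $\|(z-L_0)^{-1}\|$ in the imaginary direction on $L^2(\R^d)$; yet this decay is exactly what the Gearhart--Pr\"u\ss{} hypothesis demands. I would tackle this by combining three ingredients: sharp asymptotics of the Kummer functions in the large-parameter regime to control the free Green's function pointwise in $z$; a weighted estimate exploiting $|V(r)|\lesssim\langle r\rangle^{-2}$ to show that $V(z-L_\ell^0)^{-1}$ has operator norm $o(1)$ as $|\Im z|\to\infty$, uniformly in $\ell$; and an explicit uniform-in-$\Im z$ bound on the free resolvent extracted from the Green's function. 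Together these should close the Neumann series with uniformly small remainder, verify the Gearhart--Pr\"u\ss{} hypothesis, and thereby establish the spectral mapping.
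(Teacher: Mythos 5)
Your structural skeleton is the same as the paper's: Lumer--Phillips generation (your dissipativity estimate plus range density is exactly the paper's Lemma \ref{lem:gen}), reduction of spectral mapping to resolvent bounds via Gearhart--Pr\"u\ss{}, angular separation into radial operators, and an explicit ODE construction of the reduced resolvents. One formulation issue first: you reduce to showing a uniform bound on $\|(z-L)^{-1}\|$ over whole vertical lines $\{\Re z=\tfrac1t\log|\lambda|\}$ contained in $\rho(L)$. That is too strong a hypothesis to verify: for $\Re z=d+b$ with $0<b\leq\|V\|_{L^\infty}$ the paper only shows that $\sigma(L)$ meets such a line in a \emph{bounded} set (Theorem \ref{thm:main1}), so the line may well intersect $\sigma(L)$ even when $\lambda\notin e^{t\sigma(L)}$. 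You need the version of Gearhart--Pr\"u\ss{} used in Lemma \ref{lem:Restomain}, which asks only for a bound on the discrete set $\{z:e^{tz}=\lambda\}$: the lattice points with large imaginary part are handled by the uniform estimate, the remaining finitely many individually.

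The genuine gap is in the core analytic step, the uniformity in both $\ell$ and $\Im z$. Your two supporting claims do not deliver it. First, the remark that the centrifugal term ``only improves the coercivity'' gives, via dissipativity, exactly $\|(z-L^0_\ell)^{-1}\|\leq(\Re z-d)^{-1}$, uniformly in $\ell$ \emph{and} in $\Im z$ but with no decay in $\Im z$; hence $\|V(z-L^0_\ell)^{-1}\|\leq\|V\|_{L^\infty}/b$, and your Neumann series closes only for $b>\|V\|_{L^\infty}$, i.e.\ precisely in the trivial region already covered by the growth bound $e^{Mt}$ of Lemma \ref{lem:gen}. The statement you actually need, $\|V(z-L^0_\ell)^{-1}\|\to 0$ as $|\Im z|\to\infty$ at fixed $b$, uniformly in $\ell$, is a weighted smoothing estimate that no soft/coercivity argument produces in this non-normal setting (recall $\sigma(L_0)$ fills a half-plane); it requires pointwise control of the free Green's function jointly in $r$, $\omega$, and $\nu=\tfrac d2+\ell-1$. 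Second, invoking ``sharp asymptotics of the Kummer functions in the large-parameter regime'' as a citable ingredient underestimates the problem: the relevant limit is a \emph{simultaneous} two-parameter regime ($\mu=b+\I\omega$ with $\omega\to\infty$ and order $\nu\to\infty$), for which uniform Whittaker/parabolic-cylinder asymptotics are not off the shelf --- this is exactly why the paper abandons the explicit special-function representation and instead constructs fundamental systems by Liouville--Green transforms plus Volterra iterations, matched across the Bessel/Hankel/Weber regimes (Proposition \ref{prop:fsrbig}, Lemmas \ref{lem:fsrsm}, \ref{lem:fsrsm2}, \ref{lem:fsrsmnlg}), and only then reads off the kernel bounds of Lemmas \ref{lem:estG} and \ref{lem:estGhat}. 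Until you supply an equivalent uniform two-parameter analysis, ``these should close the Neumann series'' is unsupported; note also that once such Green's function bounds are in hand, absorbing $V$ directly into the ODE Volterra iteration (as the paper does) costs nothing extra, so your operator-level Neumann series buys no simplification of the real work.
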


\subsection{Remarks}
We remark that the proof of 
Theorem \ref{thm:main} considerably simplifies if $V\leq 0$ because in this case one has
$\sigma(L)=\sigma(L_0)$ and from \cite{Met01} it follows that $\sigma(L_0)=\{z\in \C: \Re z\leq d\}$,
see also Lemma \ref{lem:specL0} below.
However, as explained in the introduction,
our main motivation for studying this problem comes from the stability of self-similar
solutions. In this context there is no reason to believe
that the potential has a sign. 
In fact, the most interesting situations 
occur if there exist self-similar solutions
with a finite number of unstable modes.
The corresponding potentials will then have finitely many zeros.
Since our proof works equally well for complex-valued potentials,
we decided to state Theorem \ref{thm:main} in this generality.

Determining the spectrum of $L$ is a different problem which we do not touch upon in this
paper. After all, the spectrum of $L$ depends on the concrete form
of $V$.
On the other hand, as a by-product of our investigations, we can at least deduce the following 
nontrivial property.

\begin{theorem}
\label{thm:main1}
Let $\sigma'(L):=\sigma(L)\backslash \sigma(L_0)$. Then, for any $b>0$, the set
\[ \sigma'(L)\cap \{d+b+\I\omega: \omega\in \R\} \]
is bounded.
\end{theorem}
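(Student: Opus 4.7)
Fix $b>0$ and set $\lambda=d+b+\I\omega$. Since $\sigma(L_0)=\{z\in\C:\Re z\leq d\}$, we have $\lambda\in\rho(L_0)$, and the free resolvent $R_0(\lambda):=(\lambda-L_0)^{-1}$ is a well-defined bounded operator on $L^2(\R^d)$. The hypothesis $|V(r)|\leq C\langle r\rangle^{-2}$ makes multiplication by $V$ a bounded operator $M_V$ on $L^2(\R^d)$, and on $\mc D(L)=\mc D(L_0)$ the identity $L=L_0+M_V$ furnishes the factorization
\[
\lambda - L = (\lambda - L_0)\bigl(I - R_0(\lambda)M_V\bigr).
\]
Since $\lambda-L_0$ is boundedly invertible, $\lambda\in\sigma(L)$ if and only if $I-R_0(\lambda)M_V$ fails to be invertible on $L^2(\R^d)$; as $\Re\lambda>d$ this is equivalent to $\lambda\in\sigma'(L)$. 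The problem thus reduces to showing that $I-R_0(\lambda)M_V$ is invertible once $|\omega|$ is sufficiently large, for which it suffices to prove
\begin{equation}\label{eq:R0Vdecay}
\|R_0(\lambda)M_V\|_{L^2(\R^d)\to L^2(\R^d)}\longrightarrow 0 \qquad\text{as }|\omega|\to\infty,\;\Re\lambda=d+b.
\end{equation}
Granted \eqref{eq:R0Vdecay}, for $|\omega|\geq\omega_0(b)$ the Neumann series for $(I-R_0(\lambda)M_V)^{-1}$ converges, so $\lambda\in\rho(L)$ and $\sigma'(L)\cap\{d+b+\I\omega:\omega\in\R\}$ is contained in the bounded set $\{d+b+\I\omega:|\omega|<\omega_0(b)\}$.

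To prove \eqref{eq:R0Vdecay} I would appeal to the fiber-wise resolvent analysis developed in the proof of Theorem \ref{thm:main}. Decomposing $L^2(\R^d)$ into spherical-harmonic modes diagonalizes $L_0$ into one-dimensional operators $L_{0,\ell}$ acting on weighted half-lines, and by radiality $M_V$ is also diagonal in this decomposition. Uniform-in-$\ell$ estimates on the fiber resolvents, combined with the decay $|V(r)|\lesssim\langle r\rangle^{-2}$ and $|V'(r)|\lesssim\langle r\rangle^{-3}$ together with the oscillation of $e^{-\I\omega t}$ in the Laplace representation $R_0(\lambda)=\int_0^\infty e^{-\lambda t}S_0(t)\,dt$ (one integration by parts in $t$ gaining a factor $|\omega|^{-1}$, with the commutator of $L_0$ and $M_V$ controlled by the bound on $V'$), should yield \eqref{eq:R0Vdecay}.

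The principal obstacle is precisely the decay \eqref{eq:R0Vdecay}. Because $L_0$ is highly non-normal on unweighted $L^2(\R^d)$, the bound $\|R_0(\lambda)\|\leq 1/\mathrm{dist}(\lambda,\sigma(L_0))$ fails outright, and even a uniform-in-$\omega$ bound on $R_0(\lambda)$ on the line $\Re\lambda=d+b$ would only give the existence of \emph{some} $b$-dependent vertical line free of $\sigma'(L)$, not boundedness of $\sigma'(L)$ on every such line. Genuine $|\omega|$-decay of $R_0(\lambda)M_V$ must instead be extracted through the angular-separation machinery of the paper, using radiality and the pointwise decay of $V$ and $V'$ in an essential way; this is exactly the step where the quantitative hypotheses on $V$ in Theorem \ref{thm:main} are invoked.
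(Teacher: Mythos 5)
Your reduction of the theorem to the invertibility of $I-R_0(\lambda)M_V$ for $|\omega|$ large is correct, but the proof has a genuine gap exactly where you locate it: the key estimate \eqref{eq:R0Vdecay} is never established, and neither of the two routes you sketch for it works as stated. The Laplace-transform argument does not produce a factor $|\omega|^{-1}$: a single integration by parts in $\int_0^\infty e^{-\lambda t}S_0(t)M_Vf\,dt$ merely reproduces the resolvent identity $L_0R_0(\lambda)=\lambda R_0(\lambda)-I$, and to extract genuine gain from the oscillation one would need to control $L_0S_0(t)M_V$ near $t=0$ or the commutator $[L_0,M_V]=(\Delta V)+2\nabla V\cdot\nabla-2\,x\cdot\nabla V$, which is an unbounded first-order operator whose zeroth-order coefficient involves $V''$ --- not controlled by the hypotheses of Theorem \ref{thm:main}. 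The appeal to ``the fiber-wise resolvent analysis developed in the proof of Theorem \ref{thm:main}'' also does not close the gap: that analysis (Theorem \ref{thm:main2}, via Lemmas \ref{lem:estG} and \ref{lem:estGhat}) yields uniform \emph{boundedness} $\|R_{\ell,m}(\lambda)\|\leq C$ of the \emph{perturbed} reduced resolvent; it asserts no smallness of $R_0(\lambda)M_V$, and, as you note yourself, uniform bounds on $R_0(\lambda)$ alone cannot imply it. One could in fact obtain the smallness you need by rerunning the entire Liouville--Green/Volterra construction with $V=0$ and estimating the Hilbert--Schmidt norm of the kernel $\tilde G_\ell(r,s,\omega)V(s)$ fiber by fiber, uniformly in $\ell$ (the decay of $V$ then yields a power of $\omega^{-1}$); but that is precisely the technical content of the paper, not something that can be cited as already available.

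For comparison, the paper's own proof is shorter and bypasses the Neumann series entirely: the construction behind Theorem \ref{thm:main2} produces the reduced resolvents $\tilde R_\ell(d+b+\I\omega)$ of the full operator $L$ (with $V$ already absorbed perturbatively into the ODE analysis) as integral operators with uniformly bounded kernels for all $|\omega|$ large, so the expansion in the proof of Lemma \ref{lem:reduced} directly exhibits a bounded inverse of $d+b+\I\omega-L$. Hence $d+b+\I\omega\in\rho(L)$ for $|\omega|$ large, and the intersection of $\sigma'(L)$ with the line $\Re z=d+b$ is bounded --- no smallness of $R_0(\lambda)M_V$ is required.
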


\noindent In this respect we also remark that our construction seems to imply that
$\sigma'(L)$ 
is discrete
since the addition of the potential $V$ does not change the asymptotics of the involved
ODEs. 
However, we do not elaborate on this any further 
since in typical applications  
the potential $V$ induces a relatively compact operator and the abstract
theory implies that 
$\sigma'(L)$ consists of eigenvalues only. 
As a matter of fact, relative compactness can already be deduced from very mild 
decay properties of the potential, see e.g.~\cite{HanHol10}.

Finally, we would like to mention that it is possible to weaken the assumptions on $V$ considerably.
For instance, inspection of the proof shows that $V\in W^{1,1}(\R_+)$ suffices for
the argument to go through. However, for the sake of simplicity we do not prove
Theorem \ref{thm:main} in this generality.

\subsection{Further discussion and related work}
Ornstein-Uhlenbeck  operators are mostly studied in suitable weighted spaces
with invariant measures, e.g.~\cite{Lun97, MetPalPri02, MetPruRhaSch02, KojYok10, ForRha13, BerLor07}.
However, there is also a growing interest in the corresponding operators
acting on spaces with more general weights \cite{StuerzerArnold2014} or on unweighted $L^p$-spaces as in the present paper, 
see e.g.~\cite{Met01, AreMetPal06, HieLorPruRhaSch09, MetOuhPal11}.

In general, the question of spectral mapping between the semigroup and its generator
is of uttermost importance for applications since typically, the only way to 
determine the stability of a time-evolution system is to study the spectrum of its generator.
Unfortunately, spectral mapping is not stable under bounded perturbations.
If the perturbed semigroup does not have ``nice'' properties such as eventual norm continuity,
it can be very difficult to prove a suitable spectral mapping theorem.
Parabolic equations in 
non-self-adjoint settings are a prominent example 
where spectral mapping is nontrivial, see e.g.~\cite{Helffer2013}
and references therein.
Furthermore, hyperbolic equations are an important class of evolution problems where 
difficult problems related to spectral mapping occur
since the spectrum
contains the imaginary axis. 
Although there are many positive results, 
see e.g.~\cite{GesJonLatSta00} for the case of Schr\"odinger
equations and \cite{Lic08, BatLiaXia05} for hyperbolic systems, it was precisely in this context where
Renardy constructed his by now famous counterexample \cite{Ren94}. It shows 
that very natural, relatively compact perturbations of the wave
equation can destroy spectral mapping, even in a standard $L^2$-space.
This simple example came as a shock although many counterexamples to spectral mapping 
were known at the time,
e.g.~\cite{HilPhi57, Foi73, Zab75, HejKap86}. However, there
was a widespread belief that such a pathology is confined to rather artificially
constructed situations that never occur in real-world applications.
In view of this, Renardy's example is very disturbing.  On the other hand, 
it is known that in Hilbert spaces ``most'' bounded perturbations preserve spectral mapping
\cite{Ren96}. This is a positive result from a psychological point of view 
but it cannot be used to
deduce spectral mapping for a concrete problem.

To the knowledge of the authors, the most general ``abstract'' conditions 
that guarantee spectral mapping are based on norm continuity properties, see \cite{BreNagPol00}. 
However, we do not see how to apply the theory from \cite{BreNagPol00} 
to the problem at hand, see
Appendix \ref{sec:apx} for a discussion on this.
That is why we choose a more explicit approach.
The key tool 
is the Gearhart-Pr\"u\ss{} Theorem \cite{Gear78, Pru84} 
which reduces the question of spectral mapping 
to uniform bounds on the resolvent with respect to the imaginary part of the spectral parameter.
Consequently, we perform an explicit 
perturbative construction of 
the resolvent along vertical lines in the complex plane, for large imaginary parts
of the spectral parameter. 
This is possible because the potential is assumed to be radial which allows us
to reduce the spectral problem to an infinite number of decoupled ODEs, one for
each value of the angular momentum parameter $\ell$.
For each fixed $\ell$ we construct the ``reduced'' resolvent by asymptotic ODE methods
based on the Liouville-Green transform, and we establish $L^2$-bounds that hold
uniformly in $\ell$.
These allow us to obtain the desired bounds for the ``full'' resolvent by summing
over $\ell$ and spectral
mapping follows from the Gearhart-Pr\"u\ss{} Theorem.

\subsection{Notation}
We use standard Lebesgue and Sobolev spaces denoted by 
$L^p$, $W^{k,p}$, $H^k=W^{k,2}$, and $\mc S$ is the Schwartz space.
Furthermore, for $f: \R_+\to \C$ we write
\[ \|f\|_{L_\mathrm{rad}^2(\R^d)}^2:=\int_0^\infty |f(r)|^2 r^{d-1}dr \]
where $\R_+:=[0,\infty)$.
The letter $C$ (possibly with indices to indicate dependencies) denotes
a positive constant that might change its value at each occurrence.
We write $f(r)=O_\C (g(r))$ if 
$|f(r)|\leq C|g(r)|$ \emph{and} $|f'(r)|\leq C|g'(r)|$ (the subscript $\C$ indicates
that $f$ might be complex-valued).
As usual, $A \lesssim B$ means $A\leq CB$ where $C$ is independent of all the parameters
that occur in the inequality.
We also write $A\simeq B$ if $A\lesssim B$ and $B\lesssim A$.
$A\gg B$ means that $A\geq CB$ for $C$ sufficiently large.
Furthermore, we frequently use the ``japanese bracket'' notation $\langle x\rangle:=\sqrt{1+|x|^2}$.
For the Wronskian $W(f,g)$ of two functions $f$ and $g$ we use the convention
$W(f,g)=fg'-f'g$.
The domain of a closed operator $A$ on a Banach space is denoted by $\mc D(A)$ and we write
$\sigma(A)$ for its spectrum and $\rho(A)=\C\backslash \sigma(A)$ for its resolvent set.
Finally, in the technical part we restrict ourselves to $d\geq 3$ in order to avoid technicalities 
involving logarithmic
corrections. With minor modifications, the same construction can be performed in the case $d=2$.

\section{Preliminaries}

\subsection{Generation of the semigroup}
For the sake of completeness we include the generation result which is of course well 
known \cite{BerLor07}.

\begin{lemma}
\label{lem:gen}
Define $\tilde L_0: \mc D(\tilde L)\subset L^2(\R^d)\to L^2(\R^d)$ by $\tilde L_0 u:=\tilde Lu-Vu$
where $\tilde L$ and $V$ are from Theorem \ref{thm:main}.
Then the operator $\tilde L_0$ is densely defined, closable, and its closure $L_0$ 
generates a strongly continuous one-parameter semigroup 
$\{S_0(t): t\geq 0\}$ which satisfies
\[ \|S_0(t)\|_{L^2(\R^d)}\leq e^{dt} \]
for all $t\geq 0$. As a consequence, $L$ generates a 
semigroup $\{S(t): t\geq 0\}$ satisfying
\[ \|S(t)\|_{L^2(\R^d)}\leq e^{Mt} \]
for all $t\geq 0$ where $M=d+\|V\|_{L^\infty(\R_+)}$.
\end{lemma}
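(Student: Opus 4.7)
The plan is to construct $S_0$ explicitly and then treat $V$ as a bounded multiplicative perturbation. The free operator $\mc L_0$ reduces to the heat operator under the change of variables $v(t,x):=u(t,e^{2t}x)$, which converts $\partial_t u=\mc L_0 u$ into $\partial_t v=e^{-4t}\Delta v$; after the time reparametrisation $\tau(t):=(1-e^{-4t})/4$ this becomes the standard heat equation. This motivates the definition
\[
[S_0(t)f](y):=\bigl(e^{\tau(t)\Delta}f\bigr)(e^{-2t}y),\qquad t\geq 0,\ f\in L^2(\R^d).
\]
The substitution $z=e^{-2t}y$ (with Jacobian $e^{2dt}$) immediately yields
\[
\|S_0(t)f\|_{L^2(\R^d)}^2=e^{2dt}\bigl\|e^{\tau(t)\Delta}f\bigr\|_{L^2(\R^d)}^2\leq e^{2dt}\|f\|_{L^2(\R^d)}^2,
\]
which is the claimed bound for $S_0$. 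Strong continuity at $t=0$ and the semigroup property are inherited from the heat semigroup via the continuity of $\tau$ and an elementary $L^2$-continuity argument for the dilation $f\mapsto f(e^{-2t}\cdot)$.

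To identify the generator, I would observe that the explicit formula maps $\mc S(\R^d)$ into itself, and differentiating at $t=0$ on $\mc S(\R^d)$ recovers $\mc L_0 u$. Hence $\mc S(\R^d)$ lies in the domain of the generator, the generator acts there as $\mc L_0$, and by invariance $\mc S(\R^d)$ is a core for the generator $L_0$. It remains to identify $L_0$ with the closure of $\tilde L_0$, which reduces to the inclusion $\tilde L_0\subset L_0$, i.e.\ the fact that every $u\in\mc D(\tilde L_0)$ is the graph-norm limit of Schwartz functions. This is a standard cut-off-and-mollify argument that exploits $u\in H^2\cap C^\infty(\R^d)$ together with $\mc L_0 u\in L^2(\R^d)$.

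For the perturbed operator, the hypothesis $|V(r)|\leq C\langle r\rangle^{-2}$ gives $V\in L^\infty(\R_+)$, so multiplication by $V$ is bounded on $L^2(\R^d)$ with operator norm at most $\|V\|_{L^\infty(\R_+)}$. Since $Vu\in L^2(\R^d)$ whenever $u\in L^2(\R^d)$, one has $\mc D(\tilde L)=\mc D(\tilde L_0)$ and $\tilde L=\tilde L_0+V$ on this common domain; because closure commutes with bounded additive perturbations, $L=L_0+V$. The classical bounded perturbation theorem for strongly continuous semigroups then yields $\|S(t)\|\leq e^{(d+\|V\|_{L^\infty})t}$. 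The only genuinely delicate step in this plan is the Schwartz-density approximation used to identify the generator; everything else reduces to explicit computation with the Mehler-type formula or to standard perturbation theory.
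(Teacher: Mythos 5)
Your proposal is correct, but it takes a genuinely different route from the paper. The paper proves generation via the Lumer--Phillips theorem: an integration by parts gives the quasi-dissipativity estimate $\Re(\tilde L_0u\,|\,u)_{L^2}\leq d\|u\|_{L^2}^2$, and density of the range of $2d-\tilde L_0$ is established by solving the resolvent equation explicitly on the Fourier side, $(|\xi|^2-2\xi\cdot\nabla)\hat u=\hat f$, and bounding the resulting integral kernel by a Schur-type test; the bound $\|S_0(t)\|\leq e^{dt}$ and the fact that the \emph{closure} of $\tilde L_0$ is the generator then come for free from Lumer--Phillips, and the statement for $L$ follows, as in your argument, from the Bounded Perturbation Theorem. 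You instead construct $S_0(t)$ explicitly as a dilation composed with the heat semigroup (the same Mehler-type formula the paper records in Appendix A for a different purpose), which makes the growth bound $e^{dt}$ and strong continuity immediate, but shifts the real work to identifying the generator: you must check the commutation relation between dilations and the heat flow (equivalently the cocycle identity $\tau(t+s)=\tau(s)+e^{-4s}\tau(t)$) for the semigroup law, invoke invariance of $\mc S(\R^d)$ to get a core, and then show $\tilde L_0\subset L_0$ by graph-norm approximation of elements of $\mc D(\tilde L_0)$ by Schwartz functions. That last step does work as you say: for $u\in H^2\cap C^\infty$ with $\mc L_0u\in L^2$ one has $x\cdot\nabla u\in L^2$, and cutting off with $\chi(\cdot/n)$ converges in graph norm since $x\cdot\nabla\chi(x/n)$ is uniformly bounded and supported in $|x|\geq n$; since $\mc S\subset\mc D(\tilde L_0)$ is a core for the generator, this yields $\overline{\tilde L_0}=L_0$ and, in particular, closability. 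In short, the paper's route trades your explicit formula and core argument for a resolvent construction plus Lumer--Phillips, which packages the generator identification automatically; your route is more concrete and makes the sharp constant $e^{dt}$ transparent, at the cost of the (routine but necessary) approximation and core bookkeeping that you correctly single out as the delicate step.
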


\begin{proof}
Since $C^\infty_c(\R^d)\subset \mc D(\tilde L)$, 
it is obvious that $\tilde L_0$ is densely defined and integration by parts yields
\[ \Re (\tilde L_0 u | u)_{L^2(\R^d)}=-\int_{\R^d}|\nabla u(x)|^2 dx
+d\int_{\R^d}|u(x)|^2 dx\leq d \|u\|_{L^2(\R^d)}^2 \]
for all $u\in \mc D(\tilde L)$ (the boundary terms vanish by the density of $C^\infty_c(\R^d)$
in $H^2(\R^d)$).
Now we claim that the operator $2d-\tilde L_0$ has dense range.
To prove this, it suffices to show that the equation $(2d-\tilde L_0)u=f$ has a solution 
$u\in H^2(\R^d)\cap C^\infty(\R^d)$ for any given $f\in \mc S(\R^d)$.
On the Fourier side this equation reads
\[ (|\xi|^2-2\xi\cdot \nabla)\hat u(\xi)=\hat f(\xi) \]
which is solved by
\[ \hat u(r\omega)=\int_r^\infty \frac{e^{-\frac14(s^2-r^2)}}{2s}\hat f(s\omega)ds \]
where we introduced polar coordinates $r=|\xi|$ and $\omega=\xi/|\xi|$.
We obtain
\[ \hat u(r\omega)r^{\frac{d-1}{2}}=\int_0^\infty K(r,s)\hat f(s\omega)s^{\frac{d-1}{2}}ds \]
with $K(r,s)=\frac12 r^{\frac{d-1}{2}}e^{-\frac14(s^2-r^2)}s^{-\frac{d+1}{2}}1_{[0,\infty)}(s-r)$.
The kernel $K$ satisfies the bound $|K(r,s)|\lesssim \min\{r^{-1},s^{-1}\}$ and hence,
it induces a bounded operator on $L^2(\R_+)$ (see e.g.\ \cite{DonKri13}, Lemma 5.5).
Consequently, we infer
\begin{align*} 
\|\hat u\|_{L^2(\R^d)}^2&=\int_{\S^{d-1}}\|\hat u(\cdot\, \omega)\|_{L^2_\mathrm{rad}(\R^d)}^2
d\sigma(\omega)
\lesssim \int_{\S^{d-1}} \|\hat f(\cdot\, \omega)\|_{L^2_\mathrm{rad}(\R^d)}^2 d\sigma(\omega) \\
&=\|\hat f\|_{L^2(\R^d)}^2.
\end{align*}
Furthermore, on the support of the kernel $K(r,s)$ 
we have $r\leq s$
and thus, by the same reasoning as above we obtain the bound 
\[ \|\langle \cdot\rangle^N \hat u\|_{L^2(\R^d)}\leq C_N \|\langle \cdot \rangle^N \hat f
\|_{L^2(\R^d)} \]
for any $N\in \N$ and the right-hand side is finite since $\hat f$ is Schwartz.
By taking the inverse Fourier transform of $\hat u$,
we obtain a function $u\in H^2(\R^d)\cap C^\infty(\R^d)$ which satisfies $(2d-\tilde L_0)u=f$.
Consequently, the Lumer-Phillips Theorem yields the existence
of the semigroup
$\{S_0(t):t\geq 0\}$ with the stated bound.
The statements for $L$ and $S(t)$ follow from the Bounded Perturbation Theorem.
\end{proof}

We also recall the spectral structure of the free Ornstein-Uhlenbeck operator $L_0$.
Note that the following lemma shows in particular that the growth bound
$\|S_0(t)\|_{L^2(\R^d)}\leq e^{dt}$ is sharp and that $S$ is a general $C_0$-semigroup without
additional properties (such as eventual norm-continuity).

\begin{lemma}
\label{lem:specL0}
The spectrum of $L_0$ (defined in Lemma \ref{lem:gen}) is given by
\[ \sigma(L_0)=\{z\in \C: \Re z\leq d\}. \]
Furthermore, we have $\sigma(L_0)\subset \sigma(L)$.
\end{lemma}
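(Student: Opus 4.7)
Plan. The statement has two parts: (i) $\sigma(L_0)=\{z\in\C:\Re z\leq d\}$, and (ii) $\sigma(L_0)\subset\sigma(L)$. For (i), the inclusion $\sigma(L_0)\subset\{\Re z\leq d\}$ is immediate from Lemma~\ref{lem:gen}: the bound $\|S_0(t)\|\leq e^{dt}$ gives $\omega_0(L_0)\leq d$ and hence $s(L_0)\leq d$. For the reverse inclusion I would exhibit the open half-plane as point spectrum of the adjoint. The Fourier computation from the proof of Lemma~\ref{lem:gen} yields $\widehat{L_0 u}=(-|\xi|^2+2d+2\xi\cdot\nabla)\hat u$, so the unitary image of $L_0^*$ is $-|\xi|^2-2\xi\cdot\nabla$. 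Solving this first-order eigenvalue equation along rays produces the explicit candidate
\[ \hat v_\lambda(\xi):=|\xi|^{-\bar\lambda/2}e^{-|\xi|^2/4}, \]
which formally satisfies the equation with eigenvalue $\bar\lambda$. A polar-coordinate calculation gives $\hat v_\lambda\in L^2(\R^d)$ precisely when $\Re\lambda<d$; in that range the singularity at $\xi=0$ is locally square integrable. A cut-off argument—excising a shrinking neighbourhood of the origin, integrating by parts against Schwartz test functions, and verifying that the boundary contributions are $O(\varepsilon^{d-\Re\lambda/2})$ and hence vanish as $\varepsilon\to 0$—shows $v_\lambda:=\mc F^{-1}\hat v_\lambda\in\mc D(L_0^*)$ with $L_0^*v_\lambda=\bar\lambda v_\lambda$. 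Thus $\bar\lambda\in\sigma_p(L_0^*)$ for every $\lambda$ with $\Re\lambda<d$, so $\{\Re z<d\}\subset\sigma(L_0)$, and closedness of the spectrum fills in the boundary line. Alternatively one may simply cite \cite{Met01} for this computation.

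For (ii), I would argue via invariance of the essential spectrum under relatively compact perturbations. The decay $|V(r)|\leq C\langle r\rangle^{-2}$ forces multiplication by $V$ to be compact relative to $L_0$: any sequence bounded in the graph norm of $L_0$ is bounded in $H^2_{\mathrm{loc}}(\R^d)$ by interior elliptic regularity (since $\Delta u=L_0u+2x\cdot\nabla u\in L^2_{\mathrm{loc}}$), so Rellich--Kondrachov together with the decay of $V$ at infinity extracts a subsequence of $(Vu_n)$ convergent in $L^2(\R^d)$; this is essentially \cite{HanHol10}, referenced in the remarks. The standard Weyl-type invariance theorem then gives $\sigma_{\mathrm{ess}}(L)=\sigma_{\mathrm{ess}}(L_0)$. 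Since the closed half-plane $\sigma(L_0)$ has no isolated points, $\sigma_{\mathrm{disc}}(L_0)=\emptyset$ and therefore $\sigma(L_0)=\sigma_{\mathrm{ess}}(L_0)\subset\sigma_{\mathrm{ess}}(L)\subset\sigma(L)$, which is (ii).

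The most delicate step is the domain verification in (i): one must carefully handle the Fourier-side singularity at $\xi=0$ in order to conclude that $v_\lambda$ lies in the abstract adjoint domain and satisfies the eigenvalue equation in the strong sense, not merely classically away from the origin. A direct Weyl-sequence route for (ii) that would avoid the relative-compactness input—by spatially localising $v_\lambda$ out to infinity where $V$ is small—is in fact obstructed, because the drift commutator $[L_0,\chi(|x|/R)]$ contains the term $2x\cdot\nabla\chi(|x|/R)$ of unit size on the support of the cut-off, which cannot be beaten by renormalisation. That is why the essential-spectrum route seems preferable here.
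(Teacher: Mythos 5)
Your part (i) is essentially fine and consistent with the paper, which simply cites \cite{Met01} for $\sigma(L_0)=\{\Re z\leq d\}$: the easy inclusion from the growth bound of Lemma \ref{lem:gen} is correct, and the Fourier-side adjoint eigenfunctions $|\xi|^{-\bar\lambda/2}e^{-|\xi|^2/4}$ (with the acknowledged domain verification at $\xi=0$) reproduce the half-plane of point spectrum of $L_0^*$.

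Part (ii), however, contains a genuine gap. The step ``$\sigma(L_0)$ has no isolated points, hence $\sigma_{\mathrm{disc}}(L_0)=\emptyset$ and $\sigma(L_0)=\sigma_{\mathrm{ess}}(L_0)$'' combined with ``Weyl-type invariance gives $\sigma_{\mathrm{ess}}(L)=\sigma_{\mathrm{ess}}(L_0)$'' uses two mutually incompatible notions of essential spectrum for a non-self-adjoint operator. The dichotomy $\sigma=\sigma_{\mathrm{disc}}\sqcup\sigma_{\mathrm{ess}}$ with $\sigma_{\mathrm{disc}}$ the isolated eigenvalues of finite algebraic multiplicity characterizes the Browder essential spectrum $\sigma_{e5}$, which is \emph{not} invariant under relatively compact perturbations; the Fredholm-type essential spectra $\sigma_{e1},\dots,\sigma_{e4}$ are invariant, but for them $\sigma(A)\setminus\sigma_{e k}(A)$ need not consist of isolated points. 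A model for exactly the failure mode you must exclude is $S\oplus S^*$ with $S$ the unilateral shift: the open unit disc is filled, non-isolated spectrum consisting of Fredholm points of index zero, and it is a finite-rank perturbation of the bilateral shift, whose spectrum is only the circle. Since $L_0$ is precisely the kind of highly non-normal operator for which the paper warns that soft arguments break down (cf.\ Renardy's relatively compact counterexample \cite{Ren94}), you cannot conclude $\sigma(L_0)=\sigma_{e k}(L_0)$ for an invariant $\sigma_{e k}$ from the \emph{set} $\sigma(L_0)=\{\Re z\leq d\}$ alone; you must show that $\lambda-L_0$ fails to be Fredholm of index zero throughout the open half-plane. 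This does follow from your own Fourier picture once you observe that $\ker(\bar\lambda-L_0^*)$ is infinite-dimensional (the first-order operator $-|\xi|^2-2\xi\cdot\nabla$ admits the eigenfunction $|\xi|^{-\bar\lambda/2}e^{-|\xi|^2/4}\,g(\xi/|\xi|)$ for every $g\in L^2(\S^{d-1})$), or from injectivity of $\lambda-L_0$ together with non-density of its range; but that extra input is essentially what the paper's Appendix \ref{sec:specL0} supplies by a different, Fredholm-free route: it constructs, for every $\Re\lambda<d$, an explicit eigenfunction of the \emph{perturbed} adjoint $L^*=L_0^*+\overline{V}$ by a Volterra perturbation of the parabolic-cylinder normal form, so that $\mathrm{ran}(\lambda-L)$ is not dense and $\lambda\in\sigma(L)$ directly. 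Your relative-compactness route (whose compactness claim itself is fine, via interior elliptic regularity, Rellich, and the decay of $V$, cf.\ \cite{HanHol10}) is therefore salvageable, but only after importing the missing Fredholm information, so it does not in the end yield a softer proof than the paper's.
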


\begin{proof}
The statement on $\sigma(L_0)$ follows from \cite{Met01} and $\sigma(L_0)\subset \sigma(L)$
is proved in Appendix \ref{sec:specL0}.
\end{proof}

\subsection{Angular decomposition of the resolvent}
From now on we set $R(\lambda):=(\lambda-L)^{-1}$ for $\lambda \in \rho(L)$ where $L$
is from Theorem \ref{thm:main}.
We exploit the assumed radial symmetry of the potential $V$ by an angular decomposition.
Let $Y_{\ell,m}: \S^{d-1}\to \C$ denote a standard spherical harmonic, i.e., an
$L^2(\S^{d-1})$-normalized eigenfunction
of the Laplace-Beltrami operator on $\S^{d-1}$ with eigenvalue $\ell(\ell+d-2)$. 
We denote by $\Omega_d \subset \N_0\times \Z$ the set of admissible values of $(\ell,m)$.
The precise domain of $m$ is irrelevant for us but we note that all $\ell\in \N_0$ occur.
For $f\in L^2(\R^d)$ we define 
\[ [P_{\ell,m}f](r):=\int_{\S^{d-1}}f(r\omega)\overline{Y_{\ell,m}(\omega)}d\sigma(\omega) \]
and Cauchy-Schwarz implies that $P_{\ell,m}$ is a bounded operator from $L^2(\R^d)$
to $L_\mathrm{rad}^2(\R^d)$ with operator norm $1$.
Furthermore, we define a bounded operator $Q_{\ell,m}: L^2_\mathrm{rad}(\R^d)\to 
L^2(\R^d)$ by
\[ [Q_{\ell,m}g](x):=g(|x|)Y_{\ell,m}(\tfrac{x}{|x|}) \]
and again, the operator norm of $Q_{\ell,m}$ is $1$.
We also note that $Q_{\ell,m}P_{\ell,m}: L^2(\R^d)\to L^2(\R^d)$ is a projection.
The fact that the formal differential operator $\mc L_V$ in polar coordinates
 separates into a radial and an angular component can now be phrased in
operator language as follows.

\begin{lemma}
\label{lem:commute}
Let $\lambda \in \rho(L)$. Then $[R(\lambda), Q_{\ell,m}P_{\ell,m}]=0$ for all $(\ell,m)\in \Omega_d$.
\end{lemma}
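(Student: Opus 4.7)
The plan is to first prove that $L$ itself commutes with the bounded projection $P:=Q_{\ell,m}P_{\ell,m}$, in the sense that $P$ preserves $\mc D(L)$ and $L(Pu)=P(Lu)$ for $u\in \mc D(L)$. Once this is established, the stated resolvent commutation follows by a one-line argument: for $f\in L^2(\R^d)$ and $u:=R(\lambda)f\in\mc D(L)$, applying $P$ to $(\lambda-L)u=f$ yields $(\lambda-L)Pu=Pf$, i.e.\ $Pu=R(\lambda)Pf$, which is exactly $PR(\lambda)f=R(\lambda)Pf$.

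For the generator-level commutation I would work on the Schwartz space $\mc S(\R^d)$. It is contained in $\mc D(\tilde L)$ (any $u\in\mc S(\R^d)$ has $\mc L_V u\in L^2(\R^d)$ because $V$ is bounded and $\mc S$ is stable under multiplication by polynomials and differentiation), and it is $P$-invariant. On $\mc S(\R^d)$ the commutation $[\mc L_V,P]=0$ is a direct computation in polar coordinates: using
\[ \Delta=\partial_r^2+\tfrac{d-1}{r}\partial_r+\tfrac{1}{r^2}\Delta_{\S^{d-1}},\qquad x\cdot\nabla=r\partial_r, \]
the operator $\mc L_V$ splits into a purely radial part, which trivially commutes with $P$ since $P$ acts only in $\omega$, plus $\tfrac{1}{r^2}\Delta_{\S^{d-1}}$. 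For the spherical piece one uses that $\Delta_{\S^{d-1}}$ is symmetric on $L^2(\S^{d-1})$ and that $\Delta_{\S^{d-1}}Y_{\ell,m}=-\ell(\ell+d-2)Y_{\ell,m}$, so an integration by parts on $\S^{d-1}$ pulls the eigenvalue outside of $P_{\ell,m}$ and one sees that both $P\tfrac{1}{r^2}\Delta_{\S^{d-1}}u$ and $\tfrac{1}{r^2}\Delta_{\S^{d-1}}Pu$ coincide with $-\ell(\ell+d-2)r^{-2}Pu$.

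The main obstacle, and the only nontrivial technical point, is to show that $\mc S(\R^d)$ is a \emph{core} for $L$. I would recycle the explicit inversion of $2d-\tilde L_0$ from the proof of Lemma \ref{lem:gen}: the kernel produced there sends Schwartz functions to Schwartz functions (the weighted $L^2$-bounds on $\hat u$ derived there hold for arbitrary $N\in\N$, so the resulting $u$ is Schwartz whenever $f$ is). Hence $(2d-\tilde L_0)(\mc S(\R^d))\supset \mc S(\R^d)$, which is dense in $L^2(\R^d)$. Since $2d\in\rho(L_0)$ by Lemma \ref{lem:gen}, a standard argument — pick $u\in\mc D(L_0)$, approximate $(2d-L_0)u$ in $L^2$ by elements of $(2d-\tilde L_0)\mc S(\R^d)$, and apply the continuous resolvent $(2d-L_0)^{-1}$ to recover a graph-norm approximation — identifies $\mc S(\R^d)$ as a core for $L_0$, and since $V$ is a bounded operator the same set is a core for $L=L_0+V$.

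With the core in hand, closure takes over: given $u\in\mc D(L)$ choose $u_n\in\mc S(\R^d)$ with $u_n\to u$ and $Lu_n\to Lu$; then $Pu_n\to Pu$ by boundedness of $P$, while $L(Pu_n)=P(Lu_n)\to P(Lu)$ by the commutation on $\mc S(\R^d)$, and closedness of $L$ forces $Pu\in \mc D(L)$ with $L(Pu)=P(Lu)$. The resolvent identity above then completes the proof.
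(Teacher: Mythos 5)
Your overall architecture (commute $P:=Q_{\ell,m}P_{\ell,m}$ with the generator on a nice dense set, upgrade to $\mc D(L)$ by a core argument, then deduce the resolvent identity) is sound, and the polar-coordinate computation and the final closure/resolvent steps are fine. The genuine gap is the core argument: the claim that the explicit inversion of $2d-\tilde L_0$ from Lemma \ref{lem:gen} ``sends Schwartz functions to Schwartz functions'' is false, and the reason offered (the weighted $L^2$-bounds on $\hat u$ for all $N$) cannot give it. Those bounds control the \emph{decay} of $\hat u$, hence the smoothness of $u$; membership of $u$ in $\mc S(\R^d)$ would in addition require smoothness (indeed rapid decay of all derivatives) of $\hat u$, i.e.\ decay of $u$, and this fails at the origin of Fourier space. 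Concretely, from
\[ \hat u(r\omega)=\int_r^\infty \frac{e^{-\frac14(s^2-r^2)}}{2s}\,\hat f(s\omega)\,ds \]
one gets $\hat u(\xi)\sim -\tfrac12\,\hat f(0)\log|\xi|$ as $\xi\to 0$, so whenever $\int_{\R^d} f\neq 0$ the function $\hat u$ is not even bounded near $0$ and $u=R_0(2d)f$ is not Schwartz; since $2d\in\rho(L_0)$ the $L^2$-preimage is unique, so no Schwartz preimage exists at all. Hence $(2d-\tilde L_0)(\mc S(\R^d))\not\supset\mc S(\R^d)$, and your identification of $\mc S(\R^d)$ as a core is not established by this route (density of the range $(2d-\tilde L_0)\mc S(\R^d)$ might still hold, but that needs a separate argument).

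The conclusion you need is nevertheless true and can be rescued in two standard ways. Either invoke the explicit Mehler-type formula \eqref{eq:OU}: $S_0(t)$ is a Gau\ss ian convolution composed with a dilation, both of which preserve $\mc S(\R^d)$, so $\mc S(\R^d)$ is a dense, $S_0(t)$-invariant subspace of $\mc D(L_0)$ and is therefore a core (and remains one for $L=L_0+V$ since $V$ is bounded). Or avoid cores altogether, as the paper does: prove the commutation $[R_0(\lambda),P]=0$ directly by applying $P_{\ell,m}$ to $L_0 u$ for $u=R_0(\lambda)f$ with $f\in C^\infty_c(\R^d)$ (elliptic regularity plus symmetry of $\Delta_\omega$ on $L^2(\S^{d-1})$ — essentially your polar-coordinate computation), and then transfer to $R(\lambda)$ via the factorization $R(\lambda)=R_0(\lambda)[1-VR_0(\lambda)]^{-1}$, using that multiplication by the radial $V$ commutes with $P$. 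With either repair your argument closes; as written, the step ``$\mc S(\R^d)$ is a core'' rests on a false invariance claim. A minor additional point: the $P$-invariance of $\mc S(\R^d)$ (smoothness of $Pu$ at the origin) is true but deserves a word, since $Y_{\ell,m}(x/|x|)$ alone is not smooth at $x=0$.
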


\begin{proof}
It suffices to prove the identity $R(\lambda) Q_{\ell,m}P_{\ell,m}=Q_{\ell,m}P_{\ell,m} R(\lambda)$ on a dense subset of
$L^2(\R^d)$. So let $f\in C^\infty_c(\R^d)$.
By Lemma \ref{lem:specL0} we have $\lambda \in \rho(L_0)$ and we set $u=R_0(\lambda) f$
where $R_0(\lambda):=(\lambda-L_0)^{-1}$ is the resolvent of the free Ornstein-Uhlenbeck
operator $L_0$.
By elliptic regularity we have $u\in C^\infty(\R^d)$ and we infer
\begin{align*} 
[P_{\ell,m}L_0 u](r)&=\int_{\S^{d-1}}(D_r+\tfrac{1}{r^2}\Delta_\omega)
u(r\omega)\overline{Y_{\ell,m}(\omega)}d\sigma(\omega), \qquad r>0
\end{align*}
where $D_r=\partial_r^2+\frac{d-1}{r}\partial_r-2r\partial_r$ and $-\Delta_\omega$ is
the Laplace-Beltrami operator on $\S^{d-1}$.
By dominated convergence and the fact that $\Delta_\omega$ is symmetric on $L^2(\S^{d-1})$
we infer
\begin{align*} [P_{\ell,m}L_0 u](r)&=\left [D_r-\tfrac{\ell(\ell+d-2)}{r^2}\right ] 
\int_{\S^{d-1}}u(r\omega)
\overline{Y_{\ell,m}(\omega)}d\sigma(\omega) \\
&=\left [D_r-\tfrac{\ell(\ell+d-2)}{r^2}\right ]P_{\ell,m}u(r)
\end{align*}
and this implies
\[ Q_{\ell,m}P_{\ell,m}(\lambda-L_0) u=(\lambda-L_0)Q_{\ell,m}P_{\ell,m}u. \]
Consequently, we obtain $R_0(\lambda)Q_{\ell,m}P_{\ell,m}f=Q_{\ell,m}P_{\ell,m}R_0(\lambda)f$.
The claimed $[R(\lambda),Q_{\ell,m}P_{\ell,m}]=0$ follows now from the identity
\[ R(\lambda)=R_0(\lambda)[1-VR_0(\lambda)]^{-1} \] and the fact that $V$ is radial
(invertibility of $1-VR_0(\lambda)$ is a consequence of
$\lambda-L=[1-VR_0(\lambda)](\lambda-L_0)$ and $\lambda\in \rho(L_0)\cap \rho(L)$).
\end{proof}

\begin{definition}For $\lambda \in \rho(L)$ and $(\ell,m)\in \Omega_d$ we define the \emph{reduced resolvent} 
$R_{\ell,m}(\lambda): L^2_\mathrm{rad}(\R^d)\to L^2_\mathrm{rad}(\R^d)$ by
\[ R_{\ell,m}(\lambda):=P_{\ell,m}R(\lambda)Q_{\ell,m}. \]
\end{definition}

\begin{lemma}
\label{lem:reduced}
For every $\lambda \in \rho(L)$ we have the bound
\[ \|R(\lambda)\|_{L^2(\R^d)}\leq 
\sup_{(\ell,m)\in \Omega_d}\|R_{\ell,m}(\lambda)\|_{L^2_\mathrm{rad}(\R^d)}. \]
\end{lemma}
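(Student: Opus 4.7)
The plan is to decompose any $f \in L^2(\R^d)$ into its spherical-harmonic components, apply $R(\lambda)$ block-diagonally using Lemma~\ref{lem:commute}, and then use orthogonality of the spherical harmonics to convert the sum of squared norms into a supremum of operator norms.

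First I would record two elementary identities. On the one hand, since $\{Y_{\ell,m} : (\ell,m) \in \Omega_d\}$ is an orthonormal basis of $L^2(\S^{d-1})$, Parseval on each sphere of radius $r$ combined with integration in $r^{d-1}dr$ gives the orthogonal decomposition
\[ f = \sum_{(\ell,m)\in \Omega_d} Q_{\ell,m} P_{\ell,m} f, \qquad \|f\|_{L^2(\R^d)}^2 = \sum_{(\ell,m)\in \Omega_d} \|P_{\ell,m} f\|_{L^2_\mathrm{rad}(\R^d)}^2 \]
for every $f \in L^2(\R^d)$. On the other hand, a direct computation using $\|Y_{\ell,m}\|_{L^2(\S^{d-1})} = 1$ shows $P_{\ell,m} Q_{\ell,m} = \id$ on $L^2_\mathrm{rad}(\R^d)$, so that $Q_{\ell,m}P_{\ell,m}$ is indeed a projection.

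Next I would combine the commutation relation $R(\lambda) Q_{\ell,m} P_{\ell,m} = Q_{\ell,m} P_{\ell,m} R(\lambda)$ from Lemma~\ref{lem:commute} with the projection property $(Q_{\ell,m}P_{\ell,m})^2 = Q_{\ell,m}P_{\ell,m}$ to write
\[ R(\lambda) Q_{\ell,m} P_{\ell,m} = Q_{\ell,m} P_{\ell,m} R(\lambda) Q_{\ell,m} P_{\ell,m} = Q_{\ell,m} R_{\ell,m}(\lambda) P_{\ell,m}. \]
Applying $R(\lambda)$ to the decomposition of $f$ and inserting this identity term by term yields
\[ R(\lambda) f = \sum_{(\ell,m)\in \Omega_d} Q_{\ell,m} R_{\ell,m}(\lambda) P_{\ell,m} f. \]

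Finally I would take the $L^2(\R^d)$-norm. Because the ranges of the projections $Q_{\ell,m}P_{\ell,m}$ are mutually orthogonal and $\|Q_{\ell,m}g\|_{L^2(\R^d)} = \|g\|_{L^2_\mathrm{rad}(\R^d)}$, Parseval gives
\[ \|R(\lambda)f\|_{L^2(\R^d)}^2 = \sum_{(\ell,m)\in \Omega_d} \|R_{\ell,m}(\lambda) P_{\ell,m} f\|_{L^2_\mathrm{rad}(\R^d)}^2 \leq \sup_{(\ell,m)\in \Omega_d} \|R_{\ell,m}(\lambda)\|^2 \sum_{(\ell,m)\in \Omega_d} \|P_{\ell,m} f\|_{L^2_\mathrm{rad}(\R^d)}^2, \]
and the last sum equals $\|f\|_{L^2(\R^d)}^2$ by the Parseval identity from the first step. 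Taking square roots and the supremum over unit $f$ yields the claimed bound.

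There is no real obstacle; the argument is just bookkeeping about the orthogonal direct sum $L^2(\R^d) = \bigoplus_{(\ell,m)} \operatorname{ran}(Q_{\ell,m}P_{\ell,m})$. The only point to be slightly careful with is that one does \emph{not} get $\sum \|R_{\ell,m}\|$ on the right; the orthogonality of the decomposition is what allows the sum of squares to collapse to a supremum rather than a sum of operator norms, which is exactly what is needed for the subsequent application of the Gearhart--Pr\"u\ss{} theorem via $\ell$-uniform resolvent bounds.
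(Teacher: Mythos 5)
Your proposal is correct and follows essentially the same route as the paper: decompose $f$ via the projections $Q_{\ell,m}P_{\ell,m}$, commute $R(\lambda)$ past them using Lemma \ref{lem:commute}, and apply Parseval to collapse the sum of squares to a supremum of operator norms. The only cosmetic difference is that you assert equality $\|R(\lambda)f\|^2=\sum\|R_{\ell,m}(\lambda)P_{\ell,m}f\|^2$ by orthogonality of the ranges, whereas the paper only records the inequality $\leq$, which is all that is needed.
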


\begin{proof}
For brevity we write $\sum_{\ell,m}:=\sum_{(\ell,m)\in \Omega_d}$.
Every $f\in L^2(\R^d)$ can be expanded as 
\[ f=\sum_{\ell,m}Q_{\ell,m}P_{\ell,m}f \]
and the expansion converges in $L^2(\R^d)$.
Moreover, Parseval's identity and the monotone convergence theorem yield
\[ \|f\|_{L^2(\R^d)}^2=\sum_{\ell,m}\|P_{\ell,m}f\|_{L_\mathrm{rad}^2(\R^d)}^2. \]
Consequently, by Lemma \ref{lem:commute} and $(Q_{\ell,m}P_{\ell,m})^2=Q_{\ell,m}P_{\ell,m}$
we infer
\begin{align*}
R(\lambda)f&=\sum_{\ell,m}Q_{\ell,m}P_{\ell,m}R(\lambda)f
=\sum_{\ell,m}Q_{\ell,m}P_{\ell,m}R(\lambda)Q_{\ell,m}P_{\ell,m}f \\
&=\sum_{\ell,m}Q_{\ell,m}R_{\ell,m}(\lambda)P_{\ell,m}f
\end{align*}
which implies
\begin{align*} 
\|R(\lambda)f\|_{L^2(\R^d)}^2&\leq \sum_{\ell,m}\|R_{\ell,m}(\lambda)P_{\ell,m}f\|_{L^2_\mathrm{rad}(\R^d)}^2  \\
&\leq \sup_{\ell,m}\|R_{\ell,m}(\lambda)\|_{L_\mathrm{rad}^2(\R^d)}^2\sum_{\ell,m}\|P_{\ell,m}
f\|_{L_\mathrm{rad}^2(\R^d)}^2 \\
&\leq \sup_{\ell,m}\|R_{\ell,m}(\lambda)\|_{L_\mathrm{rad}^2(\R^d)}^2 \|f\|_{L^2(\R^d)}^2.
\end{align*}
\end{proof}

\noindent The main result of the present paper is in fact the 
following estimate on the reduced resolvent.

\begin{theorem}
\label{thm:main2}
Let $b>0$.
Then the reduced resolvent $R_{\ell,m}$ satisfies
\[ \|R_{\ell,m}(d+b+\I\omega)\|_{L_\mathrm{rad}^2(\R^d)}\leq C \]
for all $\omega \gg 1$ and all $(\ell,m) \in \Omega_d$.
\end{theorem}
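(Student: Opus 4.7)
The plan is to reduce the estimate to a one-dimensional ODE problem and then to perform a Liouville--Green analysis. Since $L$ commutes with rotations (because $V$ is radial), the reduced resolvent $R_{\ell,m}(\lambda)$ is the inverse on $L^2_\mathrm{rad}(\R^d)=L^2(r^{d-1}dr)$ of the ordinary differential operator
\[ \mc L_\ell - \lambda, \qquad \mc L_\ell := \partial_r^2 + \tfrac{d-1}{r}\partial_r - 2r\partial_r - \tfrac{\ell(\ell+d-2)}{r^2} + V(r). \]
To bring this into a form amenable to asymptotic ODE analysis I would conjugate by $r^{-(d-1)/2}e^{r^2/2}$: setting $u(r) = r^{-(d-1)/2} e^{r^2/2} w(r)$ turns the inhomogeneous equation $(\lambda-\mc L_\ell)u = f$ into the Schr\"odinger equation
\[ w''(r) = Q_\ell(r) w(r) + g(r), \qquad Q_\ell(r) := \tfrac{\mu^2-1/4}{r^2} + r^2 + b + \I\omega - V(r), \]
with $\mu := \ell+(d-2)/2$ and $g := -e^{-r^2/2}r^{(d-1)/2}f$. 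This conjugation is an isometry from $L^2_\mathrm{rad}(\R^d)$ onto $L^2(e^{r^2}dr)$, so the target bound becomes a weighted $L^2$ estimate for the Schr\"odinger Green's operator.

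Next I would construct two linearly independent solutions of the homogeneous problem by the Liouville--Green transform. Choosing the branch of $\sqrt{Q_\ell}$ with positive real part (possible since $\Im Q_\ell \equiv \omega>0$, so $Q_\ell$ never meets $(-\infty,0]$), the WKB ansatz $Q_\ell^{-1/4}\exp(\pm\int^r\sqrt{Q_\ell})$ is fed into a Volterra integral equation whose effective small parameter is the Liouville--Green error $Q_\ell^{-3/4}(Q_\ell^{-1/4})''$ plus the contribution of $V$. The decay assumptions $|V(r)|\lesssim\langle r\rangle^{-2}$ and $|V'(r)|\lesssim\langle r\rangle^{-3}$ are exactly what ensure that these error terms are integrable and, for $\omega\gg 1$, have small $L^1$-norm, so the Volterra iteration converges. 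This produces a solution $\phi_{0,\ell}$ realising the regular behaviour $r^{\mu+1/2}$ near $r=0$ and a second solution $\phi_{\infty,\ell}$ realising the decaying behaviour of order $r^{-b/2-1/2}e^{-r^2/2}$ near $r=\infty$; the matching at the endpoints is a classical reconciliation of the WKB ansatz with the Bessel-type and parabolic-cylinder-type exact solutions of the leading order problems.

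With the two fundamental solutions in hand, the Schr\"odinger normal form guarantees that the Wronskian $W(\phi_{0,\ell},\phi_{\infty,\ell})$ is constant in $r$, and evaluating it on the WKB representations yields $|W|\simeq 1$ uniformly in $\ell$ and $\omega\gg 1$; non-vanishing of $W$ also certifies that $\lambda=d+b+\I\omega\in\rho(L)$ a posteriori. The reduced resolvent is then the integral operator with explicit kernel
\[ G_\ell(r,s) = \tfrac{1}{W}\, r^{-(d-1)/2}e^{r^2/2}\,\phi_{0,\ell}(r\wedge s)\,\phi_{\infty,\ell}(r\vee s)\,e^{-s^2/2}s^{-(d-1)/2}, \]
and a Schur test on the $L^2(r^{d-1}dr)$ boundedness of this kernel yields the stated bound. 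What makes Schur work is that in the intermediate regime where $\I\omega$ dominates $Q_\ell$, the WKB factor provides pointwise control of the form $|\phi_{0,\ell}(r\wedge s)\phi_{\infty,\ell}(r\vee s)|\lesssim \omega^{-1/2}\exp(-c\sqrt\omega\,|r-s|)$, so the kernel is essentially concentrated on the diagonal at scale $\omega^{-1/2}$.

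The main obstacle I anticipate is the uniform-in-$\ell$ control of the Liouville--Green construction through the transition zones where two of the three dominant terms in $Q_\ell$ balance, namely near $r\simeq \mu/\sqrt\omega$ (centrifugal vs.\ spectral) and near $r\simeq \sqrt\omega$ (spectral vs.\ harmonic). No genuine turning points occur since $\Im Q_\ell=\omega$ stays bounded away from zero, but the Liouville--Green error $Q_\ell^{-3/4}(Q_\ell^{-1/4})''$ does grow in these zones and has to be estimated carefully so that the Volterra iteration converges uniformly in $(\ell,\omega)$. Once this uniformity is secured, the Schur-type bound and the passage back to $R_{\ell,m}$ via the isometry $u\mapsto w$ are routine.
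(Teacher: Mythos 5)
Your overall architecture (angular reduction, conjugation to normal form, Liouville--Green plus Volterra, Wronskian, Green kernel, Schur test) is the same as the paper's. The genuine gap is in the single global WKB ansatz $Q_\ell^{-1/4}\exp(\pm\int^r\sqrt{Q_\ell})$ with the \emph{full} symbol $Q_\ell(r)=\frac{\mu^2-1/4}{r^2}+r^2+b+\I\omega-V(r)$: this construction fails at the endpoint $r\to0$, and the failure has nothing to do with $V$ or with $\omega$. Near $r=0$ the phase is $\sqrt{\mu^2-\tfrac14}\,\log r+O(1)$, so your LG solutions behave like $r^{\frac12\pm\sqrt{\mu^2-1/4}}$ while the true indicial exponents are $\tfrac12\pm\mu$; equivalently, the LG correction potential generated by $\xi'=\sqrt{a}/r$ is exactly $\tfrac{1}{4r^2}$, so the residual error term in the Volterra equation is $\sim\frac{1}{4r^2}$ against solutions of power type, giving a kernel of size $\sim\frac{1}{\mu}\cdot\frac{1}{r}$ whose integral diverges logarithmically at $0$ for every fixed $\mu$, and which even for $\mu\to\infty$ only produces the wrong power $r^{O(1/\mu)}$ rather than $1+o(1)$. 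This is precisely why the paper does \emph{not} run one global LG construction: for large $\nu$ it puts only $\nu^2/r^2$ (without the $-\tfrac14$) into the phase and exploits that the induced LG potential's $\tfrac{1}{4r^2}$ singularity exactly cancels the left-over $-\tfrac{1}{4r^2}$, leaving a perturbation $\alpha^2\tilde q(\alpha r)$ that is regular at $r=0$ and small like $\nu^{-1}$ (Lemma \ref{lem:fsrsmnlg}); for small $\nu$ even that gain is absent and one must perturb off exact Bessel and Hankel functions in two subregimes $0<r<c\omega^{-1/2}$ and $c\omega^{-1/2}\le r\le1$ (Lemmas \ref{lem:fsrsm}, \ref{lem:fsrsm2}). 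Your phrase ``classical reconciliation of the WKB ansatz with the Bessel-type \ldots exact solutions'' is exactly where the missing work lives, and it is most of the paper.

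A second, related soft spot: the claim that the Wronskian of the two endpoint solutions satisfies $|W|\simeq1$ ``by evaluating it on the WKB representations'' presupposes that both solutions carry valid WKB representations at a common point, which is again the gluing problem. In the paper this requires propagating the regular solution $v_0$ from the origin out to $r=1$ through the Bessel and Hankel regimes via Wronskian connection formulas (Corollary \ref{cor:rsm}, Lemmas \ref{lem:fsrbig2}, \ref{lem:fs}), and the resulting Wronskian is $\alpha_+e^{\mu^{1/2}}[1+O_\C(\omega^{-1/2})]$ --- exponentially large, not $\simeq1$, because $v_0$ acquires an exponentially large component along the growing Weber solution. One must also check that the coefficient of the \emph{recessive} branch does not degenerate, which is a genuine cancellation-free statement and not automatic from non-vanishing of $W$. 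Once these constructions and connection formulas are in place, your final Schur-test step (rescaling $r\mapsto\omega^{1/2}r$ and using the kernel bounds $\omega^{-1/2}\langle\omega^{-1/2}r\rangle^{-\frac12\mp\frac b2}\langle\omega^{-1/2}s\rangle^{-\frac12\pm\frac b2}$) is indeed how the paper concludes.
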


\subsection{Reduction of Theorems \ref{thm:main} and \ref{thm:main1} to Theorem \ref{thm:main2}}
Now we show that Theorem \ref{thm:main2} implies Theorems \ref{thm:main}
and \ref{thm:main1}.
The rest of the paper is then devoted to the proof of Theorem \ref{thm:main2}.

\begin{lemma}
\label{lem:Restomain}
Theorem \ref{thm:main2} implies Theorems \ref{thm:main} and \ref{thm:main1}.
\end{lemma}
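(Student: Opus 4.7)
The plan is to deduce Theorem \ref{thm:main1} directly from the resolvent bound in Theorem \ref{thm:main2}, and then to obtain Theorem \ref{thm:main} via the Hilbert-space Gearhart-Pr\"u\ss{} theorem.

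For Theorem \ref{thm:main1}, I would combine Lemma \ref{lem:reduced} with Theorem \ref{thm:main2} to obtain
\[ \|R(d+b+\I\omega)\|_{L^2(\R^d)}\leq C \]
for all sufficiently large $|\omega|$; the construction carried out in the proof of Theorem \ref{thm:main2} will additionally ensure that the points $d+b+\I\omega$ then lie in $\rho(L)$. Hence $\sigma(L)\cap\{d+b+\I\omega:\omega\in\R\}$ is bounded, and since Lemma \ref{lem:specL0} makes the vertical line $\Re z=d+b>d$ disjoint from $\sigma(L_0)$, this intersection coincides with $\sigma'(L)\cap\{d+b+\I\omega:\omega\in\R\}$, giving Theorem \ref{thm:main1}.

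For Theorem \ref{thm:main}, the inclusion $\{e^{t\lambda}:\lambda\in\sigma(L)\}\subset\sigma(S(t))\setminus\{0\}$ is the standard spectral inclusion available for any $C_0$-semigroup, so only the reverse inclusion requires work. Here the plan is to invoke the Hilbert-space Gearhart-Pr\"u\ss{} theorem in the form: a nonzero $\mu\in\C$ lies in $\rho(S(t))$ precisely when every $\lambda\in\C$ with $e^{t\lambda}=\mu$ lies in $\rho(L)$ and the resulting family $\{R(\lambda,L)\}$ is uniformly bounded. Given $\mu\neq 0$ with $\mu\notin\{e^{t\lambda}:\lambda\in\sigma(L)\}$, I would first reduce to $|\mu|\leq e^{Mt}$ using Lemma \ref{lem:gen} (larger $\mu$ lie in $\rho(S(t))$ automatically), write $\mu=e^{t\lambda_0}$, and use Lemma \ref{lem:specL0} to force $b:=\Re\lambda_0-d>0$. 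The preimages $\lambda_k=\lambda_0+\tfrac{2\pi\I k}{t}$ all lie in $\rho(L)$ by hypothesis; Theorem \ref{thm:main2} combined with Lemma \ref{lem:reduced} bounds $\|R(\lambda_k,L)\|$ uniformly for $|k|$ large, while the finitely many remaining indices contribute bounded resolvents automatically. Gearhart-Pr\"u\ss{} then places $\mu$ in $\rho(S(t))$, completing the argument.

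The only subtlety I anticipate is that Theorem \ref{thm:main2} is stated only for $\omega\gg 1$, whereas the Gearhart-Pr\"u\ss{} criterion demands uniform bounds as $\omega\to\pm\infty$; I would expect the Liouville-Green construction from the proof of Theorem \ref{thm:main2} to apply verbatim for large negative $\omega$, possibly combined with an obvious $\omega\mapsto-\omega$ symmetry of the underlying angular ODE. Beyond this bookkeeping point, the reduction is a routine packaging of the resolvent estimate and presents no real obstacle.
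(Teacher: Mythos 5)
Your proposal is correct and follows essentially the same route as the paper: spectral inclusion plus the Gearhart-Pr\"u\ss{} theorem, with Lemma \ref{lem:specL0} forcing $\Re\lambda_0>d$, Theorem \ref{thm:main2} and Lemma \ref{lem:reduced} giving the uniform resolvent bound for large $|k|$ (finitely many remaining preimages being harmless), and the expansion behind Lemma \ref{lem:reduced} yielding existence of $R(d+b+\I\omega)$ for large $|\omega|$, which is Theorem \ref{thm:main1}. The one bookkeeping point you flag, large negative $\omega$, is handled in the paper exactly as you anticipate, by a complex-conjugation symmetry.
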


\begin{proof}
First of all we note that by complex conjugation, the stated resolvent bound 
in Theorem \ref{thm:main2} holds for
large negative $\omega$ as well.
We may assume $t>0$ and
use the common abbreviation
\[ e^{t\sigma(L)}:=\{e^{tz}: z\in \sigma(L)\}. \]
Recall that the inclusion $e^{t\sigma(L)}\subset \sigma(S(t))$ always holds.
Thus, in order to show Theorem \ref{thm:main},
it suffices to prove that $\C^*\backslash e^{t\sigma(L)}\subset \rho(S(t))$,
where $\C^*:=\C\backslash \{0\}$.
Now assume that $\lambda\in \C^*\backslash e^{t\sigma(L)}$
and suppose $\lambda=e^{tz}$ for some $z\in \C$.
Then we must have $z\in \rho(L)$ since otherwise $\lambda \in e^{t\sigma(L)}$. 
Thus, we obtain
\begin{equation*}
 \{z\in \C: e^{tz}=\lambda\}\subset \rho(L). 
 \end{equation*}
 Furthermore, Lemma \ref{lem:specL0} shows that $\frac{1}{t}\log |\lambda|>d$ and
by assumption and Lemma \ref{lem:reduced} we have the bound
\begin{equation*}
 \|R(\tfrac{1}{t}\log |\lambda|+\tfrac{\I}{t}\arg\lambda+\tfrac{2\pi \I k}{t})\|_{L^2(\R^d)}\leq C 
 \end{equation*}
for all $k\in \Z$ (if $\frac{1}{t}\log|\lambda|>d+\|V\|_{L^\infty(\R_+)}$, the stated bound follows directly
from the growth bound in Lemma \ref{lem:gen}).
Consequently, the set
\[ \{\|R(z)\|_{L^2(\R^d)}: e^{tz}=\lambda\}\subset \R \]
is bounded and the Gearhart-Pr\"u\ss{} Theorem (see \cite{Pru84}, Theorem 3) yields
$\lambda \in \rho(S(t))$.
This proves Theorem \ref{thm:main}.

To prove Theorem \ref{thm:main1}, it suffices to note that, for given $b>0$, 
it is a consequence of the expansion in the proof of Lemma \ref{lem:reduced}
and Theorem \ref{thm:main2} that the resolvent $R(d+b+\I \omega)$ exists,
provided $|\omega|$ is large enough.
This implies Theorem \ref{thm:main1}.
\end{proof}

\section{A nontechnical outline of the resolvent construction}
\noindent Since the rest of the paper is very technical, we outline the main steps of our construction
in a less formal fashion.
This should aid the interested reader when going through the details of the proof.

\subsection{Setup}
Our goal is to construct the reduced resolvent $R_{\ell,m}(d+b+\I\omega)$, at least
for large values of $\omega$.
If we set $u_{\ell,m}:=R_{\ell,m}(d+b+\I\omega)f$, then $u_{\ell,m}$ satisfies the ODE
\begin{align}
\label{eq:out_u}
u_{\ell,m}''(r)&+\tfrac{d-1}{r}u_{\ell,m}'(r)-2 r u_{\ell,m}'(r)
-\tfrac{\ell(\ell+d-2)}{r^2}u_{\ell,m}(r) \nonumber \\
&+V(r)u_{\ell,m}(r)-\lambda u_{\ell,m}(r)=-f_{\ell,m}(r)
\end{align}
where $f_{\ell,m}(r)=(f(r\,\cdot)|Y_{\ell,m})_{\S^{d-1}}$.
It is convenient to transform Eq.~\eqref{eq:out_u} to normal form which is achieved by
setting $u_{\ell,m}(r)=r^{-\frac{d-1}{2}}e^{r^2/2}v(r)$, where we suppress now the subscripts
$\ell,m$ on $v$ in order to avoid notational clutter.
We obtain
\begin{align}
\label{eq:out_v}
v''(r)&-r^2 v(r)-\tfrac{(d+2\ell-1)(d+2\ell-3)}{4r^2}v(r)
-(\lambda-d)v(r) \nonumber \\
&+V(r)v(r)=-r^{\frac{d-1}{2}}e^{-r^2/2}f_{\ell,m}(r)
\end{align}
and the goal is to invert this operator, i.e., we have to compute $v$ in terms of
$f_{\ell,m}$.
By the variation of constants formula, $v$ is given by
\[ v(r)=\int_0^\infty G(r,s;\lambda,\ell)s^{\frac{d-1}{2}}e^{-s^2/2}f_{\ell,m}(s)ds \]
where $G$ is the Green function, i.e.,
\[ G(r,s;\lambda,\ell)=-\frac{1}{W(v_0,v_-)(\lambda,\ell)}\left \{
\begin{array}{lr}v_0(r)v_-(s) & \mbox{ if }r\leq s \\
v_0(s)v_-(r) & \mbox{ if }r \geq s \end{array} \right . . \]
Here, $\{v_0,v_-\}$ is a fundamental system for the \emph{homogeneous equation}, that
is, Eq.~\eqref{eq:out_v} with $f_{\ell,m}=0$.
The functions $v_0$ and $v_-$ need to be chosen in such a way that $v_0$ is 
the ``good'' solution near $0$ and $v_-$ is the ``good'' solution near $\infty$.

The difficulty in obtaining the necessary estimates on $G$ is the fact
that $v_0$ and $v_-$ depend on $\ell$ and $\lambda$ which
are two potentially large parameters.
Thus, we need uniform control of $v_0$ and $v_-$ for all $\ell\in \N$ and 
$\lambda=d+b+\I\omega$
with $\omega$ large (the parameter $b$ is fixed).
This is a challenging two-parameter asymptotic problem.

Since the potential $V$ is not known, one cannot hope for explicit expressions for the
functions $v_0$ and $v_-$.
Consequently, throughout the paper we treat the potential perturbatively, that is,
we rewrite the homogeneous version of Eq.~\eqref{eq:out_v} as
\begin{align}
\label{eq:out_vhom}
v''(r)&-r^2 v(r)-\tfrac{\nu^2-\frac14}{r^2}v(r)
-\mu v(r) =-V(r)v(r)
\end{align}
and for brevity we introduce the parameters $\mu=\lambda-d=b+\I\omega$ and $\nu=\frac{d}{2}+\ell-1$.
Our hope is that the right-hand side of Eq.~\eqref{eq:out_vhom} is in some sense
negligible 
if $|\mu|$ is large (this is the only case we are interested in). 
Consequently, the first step is to solve Eq.~\eqref{eq:out_vhom} with $V=0$.
Although this equation can be solved explicitly in terms of parabolic cylinder functions,
it turns out that the corresponding expressions are still too complicated to proceed.
Thus, we do not rely on any kind of asymptotic theory for parabolic cylinder functions
but choose a different approach which we now explain.

\subsection{The Liouville-Green transform}
\label{sec:out_LG}
It is expected that the only relevant property of solutions 
to Eq.~\eqref{eq:out_vhom} (with $V=0$) is
their asymptotic behavior as $r\to 0+$ and $r\to \infty$, 
which cannot be terribly complicated.
Consequently, it should be possible to add a correction potential $\tilde Q$ (depending
on $\omega$ and $\nu$, of course) to both sides
of Eq.~\eqref{eq:out_vhom} such that
\[ v''(r)-r^2 v(r)-\tfrac{\nu^2-\frac14}{r^2}v(r)
-\mu v(r)+\tilde Q(r)v(r)=0 \]
has a ``simple'' fundamental system and the ``new'' right-hand side 
\[ -V(r)v(r)+\tilde Q(r)v(r) \]
can still be treated perturbatively.
The technical device that is used to achieve this is the Liouville-Green transform.
We briefly recall how it works which is most easily done by considering a toy problem.

Suppose we are given an equation of the form
\begin{equation}
\label{eq:out_toy}
 f''(x)+q(x)f(x)+af(x)=0 
 \end{equation}
where $a>0$ is a potentially large parameter.
The transformation $g(\varphi(x))=\varphi'(x)^\frac12 f(x)$ for an orientation-preserving 
diffeomorphism $\varphi$ yields
\[ g''(\varphi(x))+\frac{q(x)+a}{\varphi'(x)^2}g(\varphi(x))-\frac{Q(x)}{\varphi'(x)^2}
g(\varphi(x))=0 \]
where
\[ Q(x)=\frac12 \frac{\varphi'''(x)}{\varphi'(x)}-\frac34 \frac{\varphi''(x)^2}{\varphi'(x)^2} \]
is called the Liouville-Green potential.
So far, this is a general observation.
The transformation becomes useful only if one chooses $\varphi$ in a clever way, depending
on what kind of information one would like to obtain.
For instance, if it is possible to choose $\varphi$ 
in such a way that $\frac{q(x)+a}{\varphi'(x)^2}=a$, one sees that the equation
\[ f''(x)+q(x)f(x)+af(x)+Q(x;a)f(x)=0 \]
has the solutions $\varphi'(x)^{-\frac12}e^{\pm \I a \varphi(x)}$. (of course,
$\varphi$ depends on $a$
but we suppress this).
Thus, one rewrites Eq.~\eqref{eq:out_toy}
as
\[  f''(x)+q(x)f(x)+af(x)+Q(x;a)f(x)=Q(x;a)f(x) \]
and if $Q$ is small for $x$ large and $a$ large, say, one may obtain solutions of
Eq.~\eqref{eq:out_toy} of the form 
$\varphi'(x)^{-\frac12}e^{\pm \I a \varphi(x)}[1+\varepsilon_\pm(x,a)]$
where $\varepsilon_\pm(x,a)$ goes to zero as $x\to \infty$ or $a\to \infty$.
The analysis of solutions of Eq.~\eqref{eq:out_toy} is then reduced to the analysis
of the function $\varphi$ which may be considerably easier.

\subsection{Volterra iterations}
\label{sec:out_Volt}
Next, we describe the perturbative treatment of the right-hand side based on Volterra
iterations. 
For simplicity, we stick to the above toy problem Eq.~\eqref{eq:out_toy}
and set $\psi_\pm(x;a):=\varphi'(x)^{-\frac12}e^{\pm \I a \varphi(x)}$.
Suppose we would like to construct a solution to Eq.~\eqref{eq:out_toy} of the form 
$\psi_-(x;a)
[1+\varepsilon_-(x;a)]$ where $\varepsilon_-(x;a)$ vanishes as $x\to\infty$.
The variation
of constants formula yields the integral equation
\begin{equation}
\label{eq:out_volt}
h(x;a)=1+\int_x^\infty K(x,y;a)h(y;a)dy
\end{equation}
for the function $h=1+\varepsilon_-$, where
\[ K(x,y;a)=\frac{1}{W(a)}\left [\psi_-\psi_+(y;a)-\frac{\psi_+}{\psi_-}(x;a)\psi_-(y;a)^2
\right ]Q(y;a) \]
and $W(a)=W(\psi_-(\cdot;a),\psi_+(\cdot;a))=-2\I a$.
Now suppose 
\[ m_0:=\int_{x_0}^\infty \sup_{x \in (x_0,y)}|K(x,y;a)|dy<\infty \]
for some $x_0$ and all $a\geq 1$.
Then the basic theorem on Volterra equations, see e.g.~\cite{SchSofSta10}, shows that
Eq.~\eqref{eq:out_volt} has a solution $h$ 
that satisfies $|h(x;a)|\leq e^{m_0}$ for all $x\geq x_0$ and $a\geq 1$.
In a typical situation (for instance if $Q(x;a)$ decays like $\langle x\rangle^{-2}$)
one has an estimate like
\[ |K(x,y;a)|\lesssim \langle y\rangle^{-2}a^{-1} \]
for all $x_0\leq x\leq y$ and $a\geq 1$.
This bound implies the existence of 
$h$ and the Volterra equation yields the decay
\[ |\varepsilon_-(x;a)|=|h(x;a)-1|\leq e^{m_0} \int_x^\infty |K(x,y;a)|dy
\lesssim \langle x\rangle^{-1}a^{-1}. \]
In this way one would obtain a solution to Eq.~\eqref{eq:out_toy}
of the form $\varphi'(x)^{-\frac12}e^{-\I a \varphi(x)}[1+O(\langle x \rangle^{-1}a^{-1})]$
and in order to prove bounds that hold uniformly for large $x$ \emph{and} large $a$, it 
again suffices to study the function $\varphi$.

Another nice feature of Volterra iterations is the fact that the constructed functions
inherit differentiability properties of the potential.
In a typical situation the potential satisfies symbol-type bounds of the form
\[ |\partial_x^k Q(x;a)|\leq C_k \langle x\rangle^{-2-k},\quad k\in \N_0 \]
and these types of bounds are usually inherited by the function $h$, cf.~Remark 
\ref{rem:Volterra} below.

In the technical part, all our perturbative arguments are based on this
scheme and we make free use of the above observations.

\subsection{Construction of fundamental systems}
After this interlude we return to Eq.~\eqref{eq:out_vhom}.
In order to apply the machinery described in Sections \ref{sec:out_LG} and \ref{sec:out_Volt},
it is necessary to distinguish a number of cases which we name after the approximating
equations.
Recall that the relevant parameters are $\nu=\frac{d}{2}+\ell-1$ and $\omega$, where
throughout,
$\mu=\lambda-d=b+\I\omega$.
We are only interested in $\omega$ large (which we always assume) whereas $\nu$ can
be small or large.
The parameter $b$ is fixed and thus not relevant.
As a consequence, all implicit constants are allowed to depend on $b$
(but, of course, \emph{not} on $\nu$ or $\omega$).
Furthermore, we have the variable $r$ which can be small or large.
Depending on the relative location of $r$, $\omega$, and $\nu$, 
we move different terms in Eq.~\eqref{eq:out_vhom} to the right-hand side, apply
a suitable Liouville-Green transform, and perform a perturbative construction
as outlined in Sections \ref{sec:out_LG} and \ref{sec:out_Volt}.
\begin{enumerate}
\item Small angular momenta: $\nu\leq \nu_0$ for some sufficiently large absolute
constant $\nu_0>0$.
\begin{enumerate}
\item Weber case: $r\geq 1$, Proposition \ref{prop:fsrbig}. We rewrite Eq.~\eqref{eq:out_vhom} as
\begin{align*} v''(r)&-r^2 v(r)-\tfrac{\nu^2}{r^2}v(r)
-\mu v(r) \\
&=-\tfrac{1}{4r^2}v(r)-V(r)v(r). 
\end{align*}
The dominant contribution comes from a Weber-type equation. We construct a fundamental system
$\{v_-,v_+\}$ of the form
\[ v_{\pm}(r,\omega)=\tfrac{1}{\sqrt{2}}\mu^{-\frac14}\xi'(\mu^{-\frac12}r)^{-\frac12}
e^{\pm \mu\xi(\mu^{-1/2}r)}[1+\varepsilon_{\pm}(r,\omega)] \]
where $\varepsilon_{\pm}$ are complex-valued functions that satisfy the bound
\begin{align*} 
|\varepsilon_\pm(r,\omega)|\leq C r^{-1}\omega^{-\frac12} 
\end{align*}
for all $r\geq 1$, $\omega\geq \omega_0$ (where $\omega_0$ is a sufficiently large
absolute constant), $\nu\geq 0$, and some absolute constant $C>0$. 
Of course, $v_\pm$ and $\varepsilon_\pm$ also depend on $\nu$ but in the domain
$\nu\leq \nu_0$ this dependence is inessential and we suppress it in the notation.
In the sequel we will use the compact notation 
$\varepsilon_\pm(r,\omega)=O_\C(r^{-1}\omega^{-\frac12}\nu^0)$ which carries
all the relevant information.
The function $\xi$ is given in closed form as an integral and its analysis is in principle
straightforward.

\item Hankel case: $c\omega^{-\frac12}\leq r\leq 1$ where $c>0$ is a sufficiently large absolute
constant, Lemma \ref{lem:fsrsm2}.
Since $r$ is bounded, we can move the Weber term to the right-hand side and consider
\begin{align*} v''(r)&-\tfrac{\nu^2-\frac14}{r^2}v(r)
-\mu v(r) 
=r^2v(r)-V(r)v(r). 
\end{align*}
Thus, the dominant contribution comes from a Bessel equation and we construct
a fundamental system by perturbing Hankel functions.
\item Bessel case: $0<r<c\omega^{-\frac12}$, Lemma \ref{lem:fsrsm}.
This is similar to the Hankel case but we replace the Hankel functions by Bessel functions
in order to gain control near $r=0$.
\end{enumerate}
\item Large angular momenta: $\nu\geq \nu_0$.
\begin{enumerate}
\item Weber case: $r\geq 1$, Proposition \ref{prop:fsrbig}. This case is in fact already
handled by the small angular momenta Weber case since it turns out 
that for the fundamental system $\{v_\pm\}$ one has good control for all $\nu\geq 0$
and not just $\nu\leq \nu_0$.
\item Bessel case: $0<r\leq 1$, Lemma \ref{lem:fsrsmnlg}. 
This is reminiscent of the classical asymptotic theory
for Bessel functions. We rewrite Eq.~\eqref{eq:out_vhom} as
\begin{align*} v''(r)&-\tfrac{\nu^2}{r^2}v(r)
-\mu v(r) \\
&=-\tfrac{1}{4r^2}v(r)+r^2v(r)-V(r)v(r)
\end{align*}
and construct a fundamental system by perturbing the asymptotic form of Bessel functions
for large parameters.
\end{enumerate}
\end{enumerate}
The representations of solutions to Eq.~\eqref{eq:out_vhom} in the various regimes
are then used to compute the Wronskian $W(v_0,v_-)$ and to estimate the Green function
for large $\omega$, uniformly in $\nu$.
The bound on the reduced resolvent can then be obtained in a straightforward manner.

\subsection{A remark on notation}
In the technical part we make extensive use of the ``$O_\C$-notation''. In this respect
we would like to reiterate that there is no hidden dependence of any kind on
the relevant parameters $\omega$ and $\nu$.
On the contrary, it is of course precisely the point of our whole construction 
to track the 
dependence
on $\omega$ and $\nu$ explicitly through all computations.
Due to the complexity of the calculations, this is only possible with an economic notation that keeps track of the relevant information
but suppresses all the irrelevant details.
In particular, as is standard in many branches of analysis,
we hardly ever denote absolute constants explicitly.
The dependence on $\omega$ and $\nu$, on the other hand, is always explicitly stated, even if it is not
relevant in the particular context.
For instance, we use the notation $O_\C(\omega^{-\frac12}\nu^0)$ for a complex-valued function
that depends on $\omega$, $\nu$ and which is bounded by $C\omega^{-\frac12}$ for
some absolute constant $C>0$ in the relevant range of $\omega$ and $\nu$ (which is also
stated explicitly).

\section{Construction of fundamental systems}

\subsection{Reduction to normal form}

In order to construct the reduced resolvent $R_{\ell,m}(\lambda)$ we have to solve the
equation
\begin{align}
\label{eq:specrad} u''(r)&+\tfrac{d-1}{r}u'(r)-2 r u'(r)
-\tfrac{\ell(\ell+d-2)}{r^2}u(r) \nonumber \\
&+V(r)u(r)-\lambda u(r)=-f(r). 
\end{align}
Setting $u(r):=r^{-\frac{d-1}{2}}e^{r^2/2}v(r)$ yields the normal form equation
\begin{align}
\label{eq:specnorm}
v''(r)&-r^2 v(r)-\tfrac{(d+2\ell-1)(d+2\ell-3)}{4r^2}v(r)
-(\lambda-d)v(r) \nonumber \\
&+V(r)v(r)=-r^{\frac{d-1}{2}}e^{-r^2/2}f(r).
\end{align}

\noindent Consequently, our first task is to construct a fundamental system for 
\begin{align}
\label{eq:spechom}
v''(r)&-r^2 v(r)-\tfrac{(d+2\ell-1)(d+2\ell-3)}{4r^2}v(r)
-(\lambda-d)v(r) \nonumber \\
&+V(r)v(r)=0.
\end{align}
We set $\nu=\frac{d}{2}+\ell-1$, $\mu=\lambda-d$ and rewrite Eq.~\eqref{eq:spechom}
as 
\begin{align}
\label{eq:spechom2}
v''(r)&-r^2 v(r)-\tfrac{\nu^2}{r^2}v(r)-\mu v(r) =-\tfrac{1}{4r^2}v(r)-V(r)v(r).
\end{align}

\subsection{A fundamental system away from the center}

As suggested by the notation, we intend to treat the right-hand side of Eq.~\eqref{eq:spechom2} 
perturbatively. 
Thus, for the moment we set it to zero and note
that the rescaling $w(y)=v(\mu^\frac12 y)$ with $\mu>0$ yields the equation
\[ w''(y)-\mu^2(1+y^2)w(y)-\tfrac{\nu^2}{y^2}w(y)=0. \]
The Liouville-Green transform $\tilde w(\xi(y))=|\xi'(y)|^\frac12 w(y)$ leads to
\[ \tilde w''(\xi(y))-\frac{\mu^2(1+y^2)+\frac{\nu^2}{y^2}}{\xi'(y)^2}
\tilde w(\xi(y))
-\frac{Q(y)}{\xi'(y)^2}\tilde w(\xi(y))=0
 \]
with
\[ Q(y)=\frac12 \frac{\xi'''(y)}{\xi'(y)}-\frac34 \frac{\xi''(y)^2}{\xi'(y)^2}. \]
Consequently, it is reasonable to look for a diffeomorphism $\xi$ 
that satisfies
\[ \frac{\mu^2(1+y^2)+\tfrac{\nu^2}{y^2}}{\xi'(y)^2}=\mu^2 \]
or
\[ \xi'(y)=\sqrt{1+y^2+\tfrac{\nu^2}{\mu^2 y^2}} \]
and thus,
\[ \xi(y):=\int_{\mu^{-1/2}}^y \sqrt{1+s^2+\tfrac{\nu^2}{\mu^2 s^2}}ds \]
is a possible choice (the lower bound is arbitrary but this choice turns out to be convenient).
A more precise notation would be $\xi(y;\mu,\nu)$ but we refrain from using this in order
to keep the equations shorter. This sloppiness comes at the price of strange-looking identities
like $\xi(\mu^{-\frac12})=0$.
For the Liouville-Green potential we infer
\begin{equation} 
\label{eq:LGQ}
Q(y)=\frac{2y^2-3y^4+6\frac{\alpha^2}{y^2}+18y^2\frac{\alpha^2}{y^2}+\frac{\alpha^4}{y^4}}
{4y^2(1+y^2+\frac{\alpha^2}{y^2})^2},\qquad \alpha:=\tfrac{\nu}{\mu}.
\end{equation}
By construction, the equation
\begin{equation}
\label{eq:walpha} w''(y)-\mu^2(1+y^2)w(y)-\tfrac{\nu^2}{y^2}w(y)+Q(y)w(y)=0 
\end{equation}
transforms into
\[ \tilde w''(\xi(y))-\mu^2 \tilde w(\xi(y))=0 \]
for $\tilde w(\xi(y))=\xi'(y)^\frac12 w(y)$ and thus, Eq.~\eqref{eq:walpha}
has the fundamental system
\[ \xi'(y)^{-\frac12}e^{\pm \mu \xi(y)}. \]
By setting $y=\mu^{-\frac12}r$ and $w(y)=v(\mu^\frac12 y)$, Eq.~\eqref{eq:walpha} transforms into
\begin{equation}
\label{eq:rbighom}
v''(r)-r^2 v(r)-\tfrac{\nu^2}{r^2}v(r)+\mu^{-1}Q(\mu^{-\frac12}r)v(r)
-\mu v(r)=0
\end{equation}
with the fundamental system
\begin{equation}
\label{eq:basicfs}
 \xi'(\mu^{-\frac12}r)^{-\frac12}e^{\pm \mu \xi(\mu^{-1/2}r)}. 
 \end{equation}
This suggests to rewrite Eq.~\eqref{eq:spechom2} as
\begin{align}
\label{eq:rbig}
v''(r)&-r^2 v(r)-\tfrac{\nu^2}{r^2}v(r)+\mu^{-1}Q(\mu^{-\frac12}r)v(r)
-\mu v(r) \nonumber \\
&=\mu^{-1}Q(\mu^{-\frac12}r)v(r)-\tfrac{1}{4r^2}v(r)-V(r)v(r)
\end{align}
and the hope is to treat the right-hand side perturbatively.

\subsubsection{Analysis of $\xi$ and $Q$}
So far we were dealing with $\mu>0$ but we are actually interested in $\mu=b+\I\omega$
for fixed $b\in \R$ and $\omega$ large.
For $\mu>0$ the function $\xi$ can be written as
\begin{align} 
\label{eq:xi}
\xi(\mu^{-\frac12}r)&=\int_{\mu^{-1/2}}^{\mu^{-1/2}r}\sqrt{1+s^2+\tfrac{\nu^2}{\mu^2 s^2}}ds \nonumber \\
&=\mu^{-\frac12}\int_1^r \sqrt{1+\tfrac{s^2}{\mu}+\tfrac{\nu^2}{\mu s^2}}ds
\end{align}
and the last expression makes perfect sense even for $\mu=b+\I\omega$ as a contour integral
of a holomorphic function (the argument of the square root stays in $\C\backslash (-\infty,0]$
for all $s\geq 1$). 
We note that $\sqrt{\cdot}$ always means the principal branch of the complex square root,
holomorphic in $\C \backslash (-\infty,0]$ and explicitly given by
\begin{equation}
\label{eq:sqrt}
 \sqrt z=\frac{1}{\sqrt 2}\sqrt{|z|+\Re z}+\frac{\I\sgn(\Im z)}{\sqrt 2}\sqrt{|z|-\Re z}. 
 \end{equation}
As a direct consequence of the explicit formula \eqref{eq:sqrt} we have 
$\sqrt{z}^2=z$ and $|\sqrt{z}|=\sqrt{|z|}$ for all $z\in \C\backslash (-\infty,0]$.
Furthermore, the formula 
\[ \sqrt{z}\sqrt{w}=\sqrt{zw} \] is valid
at least if $\Re z\geq 0$ and $\Re w>0$.

Based on the explicit expression \eqref{eq:LGQ},
the Liouville-Green potential $\mu^{-1}Q(\mu^{-\frac12}r)$ has a straightforward 
analytic continuation
to $\mu=b+\I\omega$.
As a consequence, \eqref{eq:basicfs} is a fundamental system for Eq.~\eqref{eq:rbighom}
also in the complex case $\mu=b+\I\omega$.
We remark that in general, all functions depend on the parameter $\nu$ and this dependence
is crucial. However,
for the sake of readability we usually suppress it in the notation.

The following bound shows that the right-hand side of Eq.~\eqref{eq:rbig} can
indeed be treated perturbatively.

\begin{lemma}
\label{lem:estQ}
Let $\mu=b+\I\omega$ where $b\in \R$ is fixed. Then we have the bound
\[ |\mu^{-1}Q(\mu^{-\frac12}r)|\lesssim r^{-2} \]
for all $r>0$, $\omega \gg 1$, and $\nu\geq 0$.
\end{lemma}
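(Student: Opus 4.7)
The plan is to reduce everything to an algebraic estimate by clearing the denominators in \eqref{eq:LGQ} after the substitution $y=\mu^{-1/2}r$. A direct computation using $y^{2}=r^{2}/\mu$, $\alpha^{2}/y^{2}=\nu^{2}/(\mu r^{2})$, $\alpha^{4}/y^{4}=\nu^{4}/(\mu^{2}r^{4})$ together with $1+y^{2}+\alpha^{2}/y^{2}=(\mu r^{2}+r^{4}+\nu^{2})/(\mu r^{2})$ gives, after multiplying numerator and denominator by $\mu^{2}r^{4}$,
\[
 \mu^{-1}Q(\mu^{-1/2}r)=\frac{2\mu r^{6}-3r^{8}+6\mu\nu^{2}r^{2}+18\nu^{2}r^{4}+\nu^{4}}{4r^{2}\,\bigl(\mu r^{2}+r^{4}+\nu^{2}\bigr)^{2}}.
\]
Thus the desired bound is equivalent to showing $|N(r,\mu,\nu)|\lesssim |D(r,\mu,\nu)|^{2}$, where $N$ is the above numerator and $D=\mu r^{2}+r^{4}+\nu^{2}$, uniformly in $r>0$, $\nu\ge 0$, and $\omega\gg 1$.

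The key step is a two-sided control of $|D|$. Writing $\mu=b+\I\omega$ gives
\[
|D|^{2}=\bigl(br^{2}+r^{4}+\nu^{2}\bigr)^{2}+\omega^{2}r^{4},
\]
and I will show that for $\omega\ge 2|b|$ one has
\[
|D|\gtrsim r^{4}+\nu^{2}+\omega r^{2}.
\]
The argument splits into two cases depending on whether $r^{4}+\nu^{2}\ge 2|b|r^{2}$ or not. In the first case, $br^{2}+r^{4}+\nu^{2}\ge (r^{4}+\nu^{2})/2$, so $|D|^{2}\gtrsim (r^{4}+\nu^{2})^{2}+\omega^{2}r^{4}\gtrsim (r^{4}+\nu^{2}+\omega r^{2})^{2}$. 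In the second case, both $r$ and $\nu$ are bounded by a fixed constant (depending on $b$), and then $|D|\ge\omega r^{2}$ together with $r^{4}+\nu^{2}\lesssim \omega r^{2}$ (valid for $\omega$ large) yields the claim.

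Granted this lower bound, the proof reduces to the polynomial comparison
\[
\omega r^{6}+r^{8}+\omega\nu^{2}r^{2}+\nu^{2}r^{4}+\nu^{4}\;\lesssim\;\bigl(r^{4}+\nu^{2}+\omega r^{2}\bigr)^{2},
\]
which follows termwise from expanding the right-hand side as $r^{8}+\nu^{4}+\omega^{2}r^{4}+2r^{4}\nu^{2}+2\omega r^{6}+2\omega\nu^{2}r^{2}$, since each term on the left is dominated by a matching (or strictly larger) term on the right once $\omega\gg 1$. Combining this with $|N|\lesssim \omega r^{6}+r^{8}+\omega\nu^{2}r^{2}+\nu^{2}r^{4}+\nu^{4}$ yields $|N|\lesssim |D|^{2}$, and dividing by $4r^{2}|D|^{2}$ gives the claimed bound $|\mu^{-1}Q(\mu^{-1/2}r)|\lesssim r^{-2}$.

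The main (mild) obstacle is the lower bound on $|D|$ in the regime where $r$ is bounded and $r^{4}+\nu^{2}$ is \emph{smaller} than the perturbation $|b|r^{2}$: there one cannot rely on the real part of $D$ and must use the $\omega r^{2}$ coming from $\Im D$. Once the case split above is carried out, everything else is a transparent polynomial inequality.
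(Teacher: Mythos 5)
Your argument is correct and is essentially the paper's proof in cleared-denominator form: both hinge on bounding the numerator termwise by $|\mu|$ times a polynomial and on a lower bound for the denominator that uses $\Im\mu=\omega$ to compensate for the possibly small (or negative) real part. Your case-split estimate $|\mu r^{2}+r^{4}+\nu^{2}|\gtrsim r^{4}+\nu^{2}+\omega r^{2}$ is precisely the paper's inequality \eqref{eq:estx} with $x=r^{2}+\nu^{2}/r^{2}$, after multiplying through by $|\mu|r^{2}$.
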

 
\begin{proof}
We set $\alpha=\frac{\nu}{\mu}$.
For all $\alpha,y\in \C$
we have the bound
\begin{equation}
\label{eq:estQ0}
 |Q(y)|\lesssim \frac{|\frac{\alpha}{y}|^2+|\frac{\alpha}{y}|^4}{|y|^2|1+y^2
+\frac{\alpha^2}{y^2}|^2}
+\frac{|y|^2+|y|^4}{|y|^2|1+y^2+\frac{\alpha^2}{y^2}|^2}. 
\end{equation}
In order to estimate the denominator, we use the bound
\begin{equation}
\label{eq:estx}
 |1+\tfrac{x}{\mu}|^2\geq \tfrac12(1+\tfrac{x^2}{|\mu|^2}),\qquad x\in \R
 \end{equation}
which holds for $\omega\gg 1$ as a consequence of
 \begin{align*}
 |1+\tfrac{x}{\mu}|^2&=1+2x\Re(\mu^{-1})+\tfrac{x^2}{|\mu|^2} \\
 &=1+\frac{2b}{\sqrt{b^2+\omega^2}}\frac{x}{\sqrt{b^2+\omega^2}}+\frac{x^2}{b^2+\omega^2} \\
 &\geq 1-\frac{2b^2}{b^2+\omega^2}+\frac12 \frac{x^2}{b^2+\omega^2}.
 \end{align*}
Thus, Eqs.\ \eqref{eq:estx} and \eqref{eq:estQ0} imply
\[ |Q(\mu^{-\frac12}r)|\lesssim |\mu|r^{-2} \]
which yields the claim.
\end{proof}

The following representation of $\xi$ is crucial and contains in fact all the information
on $\xi$ we are going to use.

\begin{lemma}
\label{lem:xi}
Let $\mu=b+\I\omega$ where $b\in \R$ is fixed. Then we have the representation
\begin{align*} 
\Re [\mu\xi(\mu^{-\frac12}r)]&=\tfrac12 r^2+\tfrac{b}{2}\log 
\langle \mu^{-\frac12}r\rangle
+\varphi(r;\omega,\nu)
\end{align*}
where $\partial_r \varphi(r;\omega,\nu)\geq 0$ for all $r> 0$, $\omega \gg 1$, and $\nu\geq 0$.
\end{lemma}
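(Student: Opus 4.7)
The plan is to differentiate, reducing the monotonicity claim to a pointwise inequality. I would first unpack the contour integral in \eqref{eq:xi}: substituting $s = \mu^{-1/2}t$ and using the identity $\sqrt{\mu}\cdot\sqrt{1 + X/\mu} = \sqrt{\mu + X}$ (which is valid here because for $\omega \gg 1$ the arguments stay away from $(-\infty,0]$ and the principal branch respects multiplication in this regime), one obtains the clean representation
\[
\mu\,\xi(\mu^{-1/2}r) = \int_1^r \sqrt{t^2 + \mu + \tfrac{\nu^2}{t^2}}\,dt.
\]
Setting $P(t) := \Re\sqrt{t^2+\mu+\nu^2/t^2}$ and using $\langle\mu^{-1/2}r\rangle = (1 + r^2/|\mu|)^{1/2}$ (hence $\partial_r\log\langle\mu^{-1/2}r\rangle = r/(|\mu|+r^2)$), the lemma amounts to defining
\[
\varphi(r;\omega,\nu) := \int_1^r P(t)\,dt - \tfrac{1}{2}(r^2-1) - \tfrac{b}{2}\log\tfrac{\langle\mu^{-1/2}r\rangle}{\langle\mu^{-1/2}\rangle}
\]
and verifying that $\partial_r\varphi(r;\omega,\nu) = P(r) - r - \tfrac{br}{2(|\mu|+r^2)} \geq 0$.

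To establish the pointwise inequality $P(r) \geq r + \tfrac{br}{2(|\mu|+r^2)}$, I would use the closed form
\[
P(r)^2 \;=\; \tfrac12\!\left(\sqrt{A^2+\omega^2}+A\right), \qquad A \;:=\; r^2 + b + \tfrac{\nu^2}{r^2},
\]
obtained from \eqref{eq:sqrt} applied to $z = \mu + r^2 + \nu^2/r^2$. Since $|\mu| \geq \omega \gg |b|$, the right-hand side of the claimed inequality is positive, so we may square and rearrange to reduce matters to
\[
\sqrt{A^2+\omega^2} \;\geq\; r^2 - b - \tfrac{\nu^2}{r^2} + \frac{2br^2}{|\mu|+r^2} + \frac{b^2 r^2}{2(|\mu|+r^2)^2}.
\]
For $b \geq 0$ the crude bound $\sqrt{A^2+\omega^2}\geq A$ combined with $|\mu|\geq b$ suffices after short algebra. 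For $b < 0$ one would use instead the sharper identity $\sqrt{A^2+\omega^2} = A + \tfrac{\omega^2}{\sqrt{A^2+\omega^2}+A}$, whose correction $\omega^2/(2|A|+\omega)$ dominates every $b$-dependent term uniformly in $r$ and $\nu$ once $\omega \gg |b|$.

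The main obstacle is the case $b < 0$ at radii with $r^2$ comparable to $|\mu|$, where both sides of the pointwise inequality are of the same order and the crude bound on $\sqrt{A^2+\omega^2}$ is insufficient; there one must exploit the extra $\omega^2$-gain and track signs carefully. The saving grace throughout is the separation $\omega \gg |b|$: the relation $|\mu|^2 = b^2+\omega^2$ then ensures $|\mu| \approx \omega$, so every $b$-correction is dominated by an $\omega$-scale quantity, and the case analysis (split along $r^2 \lesssim |\mu|$ versus $r^2 \gtrsim |\mu|$, using $\sqrt{A^2+\omega^2}\geq \omega$ in the former and $\sqrt{A^2+\omega^2}\geq|A|$ in the latter) closes the bound uniformly.
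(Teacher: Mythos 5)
Your proposal is correct and follows the same basic strategy as the paper: differentiate, use $\partial_r\Re[\mu\xi(\mu^{-\frac12}r)]=\Re\sqrt{\mu+r^2+\tfrac{\nu^2}{r^2}}$ together with the explicit formula \eqref{eq:sqrt}, and reduce the lemma to the pointwise inequality $\Re\sqrt{\mu+r^2+\tfrac{\nu^2}{r^2}}\geq r+\tfrac{br}{2(|\mu|+r^2)}$. The two arguments part ways only in how this inequality is verified. The paper (which, note, explicitly treats only $b\geq 0$) avoids squaring altogether: from $|\mu+r^2+\tfrac{\nu^2}{r^2}|\geq b+r^2$ it gets $\Re\sqrt{\mu+r^2+\tfrac{\nu^2}{r^2}}\geq\sqrt{b+r^2}$, and then uses $\sqrt{b+r^2}-r=\tfrac{b}{\sqrt{b+r^2}+r}\geq \tfrac{b}{2}\tfrac{|\mu|^{-1/2}}{\langle\mu^{-1/2}r\rangle}$, which is shorter than your computation. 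Your squaring route does close for $b\geq 0$: after cancelling $2b$ against $\tfrac{2br^2}{|\mu|+r^2}$ one is left with $\tfrac{2b|\mu|}{|\mu|+r^2}\geq\tfrac{b^2r^2}{2(|\mu|+r^2)^2}$, which holds since $|\mu|\geq b$. For $b<0$ your sketch is viable, but the assertion that the correction $\omega^2/(2|A|+\omega)$ \emph{by itself} dominates every $b$-dependent term uniformly in $\nu$ is not literally true: when $\nu^2/r^2\gg\omega^2$ (small $r$, large $\nu$) this correction degenerates while $\tfrac{2|b|\,|\mu|}{|\mu|+r^2}$ stays of size $|b|$; one must then invoke the term $\tfrac{2\nu^2}{r^2}$ already sitting on the left-hand side of your displayed inequality. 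This is a small repair, and since the paper confines itself to $b\geq 0$ (the only case used later, as $\lambda=d+b+\I\omega$ with $b>0$), your treatment actually covers more ground.
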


\begin{proof}
We only prove the case $b\geq 0$.
From Eq.\ \eqref{eq:xi} we obtain
\begin{align*} \partial_r \Re[\mu \xi(\mu^{-\frac12}r)]&=\Re[\mu\partial_r \xi(\mu^{-\frac12}r)] 
=\Re \sqrt{\mu+r^2+\tfrac{\nu^2}{r^2}} \\
&=\tfrac{1}{\sqrt 2}\sqrt{|\mu+r^2+\tfrac{\nu^2}{r^2}|+b+r^2+\tfrac{\nu^2}{r^2}} \\
&\geq \sqrt{b+r^2}
\end{align*}
since
\begin{align*} |\mu+r^2+\tfrac{\nu^2}{r^2}|^2&=|\mu|^2
+2b(r^2+\tfrac{\nu^2}{r^2})+(r^2+\tfrac{\nu^2}{r^2})^2 \\
&\geq  b^2+2b r^2+r^4=(b+r^2)^2.
\end{align*}
Consequently, we find
\begin{align*}
 \partial_r\varphi(r;\omega,\nu)&=
 \partial_r \left \{\Re[\mu\xi(\mu^{-\frac12}r)]-\tfrac12 r^2
 -\tfrac{b}{2}\log 
\langle \mu^{-\frac12}r\rangle\right \} \\
 &\geq \sqrt{b+r^2}-r -\tfrac{b}{2}\frac{|\mu|^{-\frac12}|\mu|^{-\frac12}r}
 {\langle \mu^{-\frac12}r\rangle^2} \\
 &\geq \frac{b}{\sqrt{b+r^2}+r}
 -\frac{\frac{b}{2}|\mu|^{-\frac12}}{\langle \mu^{-\frac12}r\rangle} \\
 &=\frac{b|\mu|^{-\frac12}}{\sqrt{\frac{b}{|\mu|}+\frac{r^2}{|\mu|}}+|\mu|^{-\frac12}r}
 -\frac{\frac{b}{2}|\mu|^{-\frac12}}{\langle \mu^{-\frac12}r\rangle} \\
 &\geq \frac{\frac{b}{2}|\mu|^{-\frac12}}{\langle \mu^{-\frac12}r\rangle}
 -\frac{\frac{b}{2}|\mu|^{-\frac12}}{\langle \mu^{-\frac12}r\rangle}=0.
 \end{align*}
\end{proof}

\subsubsection{A fundamental system for Eq.~\eqref{eq:rbig}}
The functions
\begin{align*} v_0^\pm(r,\omega):&=\tfrac{1}{\sqrt 2} \mu^{-\frac14}\xi'(\mu^{-\frac12}r)^{-\frac12}e^{\pm \mu \xi(\mu^{-1/2}r)},\quad 
\mu=b+\I\omega
\end{align*}
are solutions to Eq.~\eqref{eq:rbighom}.
In order to compute their Wronskian,
we note that for any holomorphic function $f$ and $r\in \R$ we have the chain rule
$\partial_r f(rz)=zf'(rz)$, $z\in \C$.
Consequently, we obtain
\begin{align*} 
\partial_r v_0^\pm(r,\omega)&=\tfrac{1}{\sqrt 2}\mu^{-\frac14}\left [
\partial_r[\xi'(\mu^{-\frac12}r)^{-\frac12}]
\pm \mu^\frac12 \xi'(\mu^{-\frac12}r)^\frac12 \right ]e^{\pm \mu
\xi(\mu^{-1/2}r)}
\end{align*}
which yields
\begin{equation}
W(v_0^-(\cdot,\omega),v_0^+(\cdot,\omega))=1.
\end{equation}
With this information at hand we are ready to construct a fundamental system for
Eq.~\eqref{eq:rbig}.
Note that by Lemma \ref{lem:estQ}, the right-hand side of Eq.~\eqref{eq:rbig} is 
$O_\C(r^{-2}\omega^0\nu^0)v(r)$.

\begin{proposition}\label{prop:fsrbig}
Let $\mu=b+\I\omega$ where $b\in \R$ is fixed.
Then Eq.~\eqref{eq:rbig} has a fundamental system $\{v_-,v_+\}$
of the form
\begin{align*}
 v_\pm(r,\omega)&=v_0^\pm(r,\omega)[1+O_\C(r^{-1}\omega^{-\frac12}\nu^0)] \\
 &=\tfrac{1}{\sqrt 2}\mu^{-\frac14}\xi'(\mu^{-\frac12}r)^{-\frac12}
 e^{\pm \mu \xi(\mu^{-1/2}r)}[1+O_\C(r^{-1}\omega^{-\frac12}\nu^0)] 
\end{align*}
for all $r\geq 1$, $\omega \gg 1$, and $\nu\geq 0$.
\end{proposition}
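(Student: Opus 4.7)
My plan is to construct $v_\pm$ via a perturbative application of the variation of constants formula. Rewrite the right-hand side of Eq.~\eqref{eq:rbig} as $U(r)v(r)$ with
\[
U(r):=\mu^{-1}Q(\mu^{-1/2}r)-\tfrac{1}{4r^2}-V(r);
\]
by Lemma \ref{lem:estQ} and the hypothesis on $V$, $U=O_\C(r^{-2})$ for $r\geq 1$, uniformly in $\omega\gg 1$ and $\nu\geq 0$. Since $\{v_0^-,v_0^+\}$ is an explicit fundamental system for Eq.~\eqref{eq:rbighom} with Wronskian $W(v_0^-,v_0^+)=1$, variation of constants leads naturally to integral equations of Volterra type for the ratios $h_\pm:=v_\pm/v_0^\pm$.

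For the decaying solution, I would derive and solve
\[
h_-(r)=1+\int_r^\infty K_-(r,s)h_-(s)\,ds,\quad K_-(r,s)=\Bigl[v_0^+(s)v_0^-(s)-\tfrac{v_0^+(r)}{v_0^-(r)}(v_0^-(s))^2\Bigr]U(s).
\]
Two pointwise bounds drive everything: first, $|v_0^+(s)v_0^-(s)|=\tfrac12|\mu|^{-1/2}|\xi'(\mu^{-1/2}s)|^{-1}\leq C\omega^{-1/2}$, where the uniform lower bound on $|\xi'|$ for $s\geq 1$ and $\nu\geq 0$ is extracted from the explicit expression for $|\xi'|^2=|1+s^2/\mu+\nu^2/(\mu s^2)|$ by separating real and imaginary parts for $\omega\gg 1$; second, for $s\geq r$, Lemma \ref{lem:xi} yields $|e^{2\mu(\xi(\mu^{-1/2}r)-\xi(\mu^{-1/2}s))}|\leq Ce^{r^2-s^2}$. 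Combined with $|U(s)|\leq Cs^{-2}$, these give $\int_r^\infty|K_-(r,s)|\,ds\leq C\omega^{-1/2}r^{-1}$ uniformly in $\nu$, and standard Volterra iteration in $L^\infty([1,\infty))$ produces $h_-$ with $|h_-(r)-1|\leq C\omega^{-1/2}r^{-1}$. For the growing solution, because $v_0^+$ grows at infinity the integral for its $v_0^-$-coefficient cannot be run from $\infty$; the fix is to impose that coefficient to vanish at $r=1$ while keeping the $v_0^+$-coefficient at $\infty$ equal to $1$, producing the mixed equation
\[
h_+(r)=1-\int_1^r\tfrac{v_0^-(r)(v_0^+(s))^2}{v_0^+(r)}U(s)h_+(s)\,ds-\int_r^\infty v_0^-(s)v_0^+(s)U(s)h_+(s)\,ds.
\]
Now Lemma \ref{lem:xi} applied with $s\leq r$ controls $|e^{2\mu(\xi(\mu^{-1/2}s)-\xi(\mu^{-1/2}r))}|\leq Ce^{s^2-r^2}$, so the same kind of kernel bound holds and Banach fixed point delivers $h_+$ with $|h_+(r)-1|\leq C\omega^{-1/2}r^{-1}$. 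Linear independence of $v_\pm$ then follows from $W(v_-,v_+)=1+O(\omega^{-1/2})\neq 0$ for $\omega$ large.

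The pointwise bound is the easier half of the $O_\C$ claim; the subtler task is the derivative estimate $|\varepsilon_\pm'(r)|\leq C\omega^{-1/2}r^{-2}$. A naive differentiation of the Volterra equation yields a kernel of size $e^{r^2-s^2}s^{-2}$ that produces only $|h_-'|\lesssim r^{-3}$, insufficient for $r\in[1,\omega^{1/2}]$. To recover the missing $\omega^{-1/2}$ factor I would use the identity
\[
\varepsilon_-'(r)=-\frac{1}{(v_0^-(r))^2}\int_r^\infty(v_0^-(s))^2(1+\varepsilon_-(s))U(s)\,ds,
\]
which drops out of the representation $v_-=Av_0^-+Bv_0^+$ after the cancellation enabled by $W(v_0^-,v_0^+)=1$. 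Integrating by parts in the $s$-integral via $\partial_s e^{-2\mu\xi(\mu^{-1/2}s)}=-2\mu^{1/2}\xi'(\mu^{-1/2}s)e^{-2\mu\xi(\mu^{-1/2}s)}$ extracts a factor of $\mu^{-1/2}$; the boundary term at $s=r$ becomes
\[
\frac{\xi'(\mu^{-1/2}r)^{-1}U(r)(1+\varepsilon_-(r))}{2\mu^{1/2}}=O(\omega^{-1/2}r^{-2}),
\]
while the bulk remainder turns out to be of order $\omega^{-1/2}r^{-4}$. The completely analogous computation for $\varepsilon_+'(r)=-A_+(r)/(v_0^+(r))^2$, with integration by parts on $[1,r]$, delivers the same bound.

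The main technical obstacle throughout is keeping all implicit constants independent of $\nu\geq 0$. Beyond the uniform control of $|\xi'|$ and Lemma \ref{lem:xi}, the derivative estimate additionally requires the uniform bound $|\partial_s\xi'(\mu^{-1/2}s)^{-2}|\lesssim s^{-1}$; this follows from the explicit formula $\partial_s\xi'(\mu^{-1/2}s)^{-2}=-2\mu^{-1}(s-\nu^2/s^3)/\xi'(\mu^{-1/2}s)^4$ after a case analysis distinguishing $(s^2+\nu^2/s^2)/|\mu|\leq 1$ and $(s^2+\nu^2/s^2)/|\mu|>1$, in the latter of which the numerator is compensated by the growth of $|\xi'|^4$. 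Verifying these uniform bookkeeping bounds and checking that no hidden $\nu$-dependence enters through the iteration or the integration-by-parts error terms is the most delicate part of the argument.
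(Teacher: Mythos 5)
Your construction of $v_-$ coincides with the paper's: the same Volterra equation for $h_-=v_-/v_0^-$ with the same kernel, the same two pointwise inputs ($|v_0^-v_0^+(s)|\lesssim\omega^{-\frac12}$ from the lower bound on $|\xi'|$, and the exponential estimate for $s\geq r$ from Lemma \ref{lem:xi}), and the same conclusion $|h_-(r)-1|\lesssim \omega^{-\frac12}r^{-1}$. For $v_+$ you take a genuinely different route: the paper uses the reduction-of-order ansatz $v_+=v_-\bigl[\tfrac{v_0^+(1,\omega)}{v_0^-(1,\omega)}+\int_1^r v_-(s,\omega)^{-2}ds\bigr]$, converts it via the Wronskian identity, and integrates by parts using $v_0^-(s,\omega)^{-2}=\partial_s e^{2\mu\xi(\mu^{-1/2}s)}$; you instead solve a mixed integral equation with limits at $1$ and $\infty$ by a contraction argument. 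Both work: your version requires the smallness $\int|K|\lesssim\omega^{-\frac12}<1$ (available since $\omega\gg1$) rather than the pure Volterra structure, and it yields $W(v_-,v_+)=1+O_\C(\omega^{-\frac12})$ rather than the exact $W(v_-,v_+)=1$ that reduction of order gives, which is harmless. Where you are more careful than the paper is the derivative half of the $O_\C$ claim for $\varepsilon_-$: the paper delegates this to the symbol-inheritance statement of Remark \ref{rem:Volterra}, whereas you correctly note that naive differentiation of the Volterra equation only produces $|\varepsilon_-'(r)|\lesssim r^{-3}$, which is weaker than the required $\omega^{-\frac12}r^{-2}$ on $1\leq r\leq\omega^{\frac12}$, and you supply the integration by parts in the identity $\varepsilon_-'(r)=v_0^-(r)^{-2}\int_r^\infty v_0^-(s)^2U(s)h_-(s)\,ds$ that recovers the missing factor $\mu^{-\frac12}$ — the same mechanism the paper deploys for $h_+$. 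One detail worth spelling out: after the integration by parts the bulk term contains $h_-'$ itself (through $\partial_s$ hitting $h_-$), so you must close the estimate either with the crude a priori bound $|h_-'(s)|\lesssim s^{-3}$ obtained first, or by running a coupled Volterra system for $(h_-,h_-')$; with that in place your argument is complete.
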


\begin{proof}
We only treat the case $b\geq 0$.
In order to construct a solution $v_-(r,\omega)$ of Eq.~\eqref{eq:rbig}
that behaves like $v_0^-(r,\omega)$ as $r\to\infty$,
we consider the integral equation
\begin{align*} v_-(r,\omega)=&v_0^-(r,\omega) 
+v_0^+(r,\omega)
\int_r^\infty v_0^-(s,\omega)O_\C(s^{-2}\omega^0\nu^0)v_-(s,\omega)ds \\
&-v_0^-(r,\omega)
\int_r^\infty v_0^+(s,\omega)O_\C(s^{-2}\omega^0\nu^0)v_-(s,\omega)ds.
\end{align*}
We note that $|v_0^\pm(r,\omega)|>0$ for all $r>0$ and 
set $h_-:=\frac{v_-}{v_0^-}$ which yields the Volterra equation
\[
h_-(r,\omega)=1+\int_r^\infty K(r,s,\omega)h_-(s,\omega)ds
\]
where
\[ K(r,s,\omega):=\left [\frac{v_0^+(r,\omega)}{v_0^-(r,\omega)}
v_0^-(s,\omega)^2-v_0^- v_0^+(s,\omega)\right ]O_\C(s^{-2}\omega^0\nu^0). \]
Now we prove pointwise bounds on $K$.
We have
\begin{align*} |v_0^- v_0^+(s,\omega)|&=\tfrac12 |\mu|^{-\frac12}|\xi'(\mu^{-\frac12}s)|^{-1}
=\tfrac12 |\mu|^{-\frac12}|1+\tfrac{1}{\mu}(s^2+\tfrac{\nu^2}{s^2})|^{-\frac12} \\
&\lesssim \omega^{-\frac12}. 
\end{align*}
Furthermore,
\begin{align*} \left |\frac{v_0^+(r,\omega)}{v_0^-(r,\omega)}v_0^-(s,\omega)^2 \right |
&=\tfrac12 |\mu|^{-\frac12}\left |e^{2\mu\xi(\mu^{-1/2}r)}\xi'(\mu^{-\frac12}s)^{-1}e^{-2\mu \xi(\mu^{-1/2}s)} 
\right |  \\
&\lesssim \omega^{-\frac12} e^{-2 \Re[ \mu\xi(\mu^{-1/2}s)-\mu\xi (\mu^{-1/2}r)]} \\
&\lesssim \omega^{-\frac12}\langle \mu^{-\frac12}r\rangle^{b}
\langle \mu^{-\frac12}s\rangle^{-b} e^{-(s^2-r^2)} \\
&\quad \times e^{-2[\varphi(s;\omega,\nu)-\varphi(r;\omega,\nu)]} \\
&\lesssim \omega^{-\frac12}
\end{align*}
for all $0\leq r\leq s$, $\omega \gg 1$, and $\nu\geq 0$ by Lemma \ref{lem:xi}.
As a consequence, we infer the estimate $|K(r,s,\omega)|\lesssim \omega^{-\frac12}s^{-2}$
which implies
\[ \int_1^\infty \sup_{r\in [1,s]}|K(r,s,\omega)|ds\lesssim \omega^{-\frac12}
\int_1^\infty s^{-2}ds\lesssim 1 \]
and the standard result on Volterra equations (see Remark \ref{rem:Volterra} below)
yields the existence of a solution $h_-$
with the bound $|h_-(r,\omega)-1|\lesssim r^{-1}\omega^{-\frac12}$.

In order to construct the solution $v_+$ we note that $|v_-(r,\omega)|>0$ for all $r\geq 0$
and $\omega \gg 1$.
Consequently, a second solution of Eq.~\eqref{eq:rbig} is given by
\[ v_+(r,\omega):=v_-(r,\omega)\left [\frac{v_0^+(1,\omega)}
{v_0^-(1,\omega)}
+\int_1^r v_-(s,\omega)^{-2}ds \right ]. \]
We set $h_+:=\frac{v_+}{v_0^+}$.
The identity 
\[ v_0^-(r,\omega)\int_1^r v_0^-(s,\omega)^{-2}ds=v_0^+(r,\omega)
-\frac{v_0^+(1,\omega)}{v_0^-(1,\omega)}v_0^-(r,\omega), \]
which follows from $W(v_0^-(\cdot,\omega),v_0^+(\cdot,\omega))=1$, then yields
\begin{align*}
h_+(r,\omega)=1&+O_\C(r^{-1}\omega^{-\frac12}\nu^0) \\
&-\frac{v_-(r,\omega)}{v_0^+(r,\omega)}\int_1^r 
v_0^-(s,\omega)^{-2}O_\C(s^{-1}\omega^{-\frac12}\nu^0)ds.
\end{align*}
Now recall that 
\begin{align*} 
v_0^-(s,\omega)^{-2}&=2\mu^\frac12 \xi'(\mu^{-\frac12}s)e^{2\mu \xi(\mu^{-1/2}s)} \\
&=\partial_{s}e^{2\mu \xi(\mu^{-1/2}s)}
\end{align*}
and thus, an integration by parts yields
\begin{align*}
h_+(r,\omega)&=1+O_\C(r^{-1}\omega^{-\frac12}\nu^0) \\
&\quad +e^{-2\mu\xi(\mu^{-1/2}r)}[1+O_\C(r^{-1}\omega^{-\frac12}\nu^0)] \\
&\quad \times \Big [e^{2\mu \xi(\mu^{-1/2}r)}O_\C(r^{-1}\omega^{-\frac12}\nu^0)
-e^{2\mu \xi(\mu^{-1/2})}O_\C(\omega^{-\frac12}\nu^0) \\
&\quad +\int_1^r e^{2\mu\xi(\mu^{-1/2}s)}O_\C(s^{-2}\omega^{-\frac12}\nu^0)ds \Big ] \\
&=1+O_\C(r^{-1}\omega^{-\frac12}\nu^0)
\end{align*}
since
\[ \left |e^{-2 [\mu\xi(\mu^{-1/2}r)-\mu\xi(\mu^{-1/2}s)]} \right |\lesssim e^{-(r^2-s^2)} \]
for all $r\geq s$, $\omega\gg 1$, and $\nu\geq 0$, see Lemma \ref{lem:xi}.
\end{proof}

\begin{remark}
\label{rem:Volterra}
The Volterra equation 
\[ f(x)=g(x)+\int_x^\infty K(x,y)f(y)dy \]
has a solution $f\in L^\infty(a,\infty)$ if $g\in L^\infty(a,\infty)$ and
\[ \int_a^\infty \sup_{x\in (a,y)}|K(x,y)|dy<\infty, \]
see e.g.~\cite{SchSofSta10}.
In addition, $f$ inherits differentiability properties from $g$ and $K$.
For instance, if the kernel $K$ is of the form $K(x,y)=e^{\phi(x)-\phi(y)}\tilde K(x,y)$,
where $\phi: (0,\infty)\to (0,\infty)$ is an orientation-preserving diffeomorphism, 
and the functions $\phi$, $\tilde K$, and $g$ behave like symbols\footnote{
A function $f \in C^k(I)$, $I\subset \R$ an interval, 
is said to behave like a symbol if $|f^{(j)}(x)|\leq C|x|^{\gamma-j}$ for some $\gamma\in \R$,
all $0\leq j\leq k$, and all $x\in I$.
A similar notion applies to functions of several variables.
As a consequence of the chain rule, symbol behavior is preserved 
under the usual algebraic operations.},
then $f$ has the same property. This follows by a simple induction 
from the identity
\begin{align*} 
f(x)&=g(x)+\int_x^\infty e^{\phi(x)-\phi(y)}\tilde K(x,y)f(y)dy \\
&=g(x)  +\int_0^\infty e^{-y'}\tilde K(x,\phi^{-1}(y'+\phi(x))) \\
&\quad \times f(\phi^{-1}(y'+\phi(x)))\frac{dy'}
{\phi'(\phi^{-1}(y'+\phi(x)))}
\end{align*}
which shows that $x$-derivatives hit only terms that behave like symbols.
\end{remark}

\subsection{A fundamental system near the center}
\label{sec:rsm}
Next, we consider Eq.~\eqref{eq:spechom} for $0<r\leq 1$.
In this case we move the term $r^2 v(r)$ to the right-hand side and treat
it perturbatively, i.e., we rewrite Eq.~\eqref{eq:spechom2} as
\begin{equation}
\label{eq:rsm}
v''(r)-\frac{\nu^2-\frac14}{r^2}v(r)-\mu v(r)
=O_\C(\langle r\rangle^2)v(r).
\end{equation}
We consider the ``homogeneous'' version
\begin{equation}
\label{eq:rsmhom}
v''(r)-\frac{\nu^2-\frac14}{r^2}v(r)-\mu v(r)=0
\end{equation}
and rescale by introducing $v(r)=w(\mu^\frac12  r)$ with $\mu>0$.
This rescaling yields the modified Bessel equation
\[ w''(y)-\frac{\nu^2-\frac14}{y^2} w(y)-w(y)=0 \]
where $y=\mu^\frac12 r$. 
A fundamental system for this equation is given by
\[ \sqrt{y}J_\nu(\I y),\quad \sqrt y Y_\nu(\I y) \]
where $J_\nu$ and $Y_\nu$ are the standard Bessel functions, see e.g.~\cite{Olv97,
NIST}.
Consequently, Eq.~\eqref{eq:rsmhom} has the fundamental system
\[ \sqrt r J_\nu(\I  \mu^\frac12 r),\quad \sqrt r Y_\nu(\I  \mu^\frac12 r). \]

\begin{lemma}
\label{lem:fsrsm}
Let $\nu_0>0$ and $\mu=b+\I \omega$ where $b\in \R$ is fixed.
Furthermore, fix $c\geq 1$.
Then Eq.~\eqref{eq:rsm} has a fundamental system $\{v_0,v_1\}$
of the form
\begin{align*}
v_0(r,\omega)&=\sqrt r J_\nu(\I \mu^\frac12 r)[1+O_\C(r^0 \omega^{-\frac12})] \\
v_1(r,\omega)&=\sqrt r [Y_\nu(\I \mu^\frac12 r)+O_\C(\omega^0)J_\nu(\I\mu^\frac12 r)]
[1+O_\C(r^0 \omega^{-\frac12})]
\end{align*}
for all $r\in (0, c\omega^{-\frac12}]$, $\omega \gg c^2$, and $\nu\in [0,\nu_0]$.
\end{lemma}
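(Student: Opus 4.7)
The plan is to perform a standard perturbative (Volterra) construction off of the fundamental system of the unperturbed modified Bessel equation \eqref{eq:rsmhom}, namely
\[ \phi(r):=\sqrt{r}J_\nu(\I\mu^{\frac12}r),\qquad \psi(r):=\sqrt{r}Y_\nu(\I\mu^{\frac12}r). \]
A direct calculation using $W(J_\nu,Y_\nu)(z)=\frac{2}{\pi z}$ together with the chain rule (which picks up a factor $\I\mu^{\frac12}$ and a factor $r$ from the $\sqrt r$-prefactors) yields $W(\phi,\psi)=\frac{2}{\pi}$, independent of $\nu,\mu$. On the relevant range $r\in(0,c\omega^{-\frac12}]$, $\omega\gg c^2$, $\nu\in[0,\nu_0]$, the argument $\I\mu^{\frac12}r$ has bounded modulus, so classical small-argument asymptotics for $J_\nu$ and $Y_\nu$ (uniform for $\nu$ in a compact set, with a logarithmic correction at $\nu=0$) give the two-sided bounds $|\phi(r)|\simeq|\mu|^{\nu/2}r^{\nu+\frac12}$ and $|\psi(r)|\lesssim|\mu|^{-\nu/2}r^{-\nu+\frac12}$ (together with a $\log$-correction when $\nu=0$). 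The right-hand side of \eqref{eq:rsm} is $F(r)v(r)$ with $F(r)=O_\C(\langle r\rangle^2)$, and on our domain $|F(r)|\lesssim 1$.

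For $v_0$, I would set $v_0=\phi\cdot h_0$ and derive the Volterra equation
\[ h_0(r,\omega)=1+\int_0^r K_0(r,s,\omega)h_0(s,\omega)\,ds,\qquad K_0(r,s,\omega)=\tfrac{\pi}{2}\Big[\tfrac{\phi(s)^2\psi(r)}{\phi(r)}-\phi(s)\psi(s)\Big]F(s), \]
coming from variation of constants with lower limit $0$. Using the bounds above, both bracketed terms are controlled for $0<s\le r$ by $|\phi(s)\psi(s)|\lesssim s$ (modulo a harmless logarithm at $\nu=0$), so $|K_0(r,s,\omega)|\lesssim s(1+|\log s|)$ and
\[ \int_0^r\sup_{r'\in(s,r)}|K_0(r',s,\omega)|\,ds\lesssim r^2(1+|\log r|)\lesssim\omega^{-1}\log\omega\ll \omega^{-\frac12}. \]
The standard Volterra theorem (Remark \ref{rem:Volterra}) then produces $h_0$ with $|h_0(r,\omega)-1|\lesssim\omega^{-\frac12}$, and the symbol-type bound on derivatives required by the $O_\C$-notation follows because $F$, $\phi^{\pm 1}$, and $\psi$ behave like symbols in $r$ on our domain.

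For $v_1$ I would use reduction of order with a carefully chosen integration constant. Setting
\[ v_1(r,\omega):=\tfrac{2}{\pi}v_0(r,\omega)\int_{c\omega^{-1/2}}^{r}v_0(s,\omega)^{-2}\,ds, \]
so that $W(v_0,v_1)=\frac{2}{\pi}=W(\phi,\psi)$, the decomposition $v_0^{-2}=\phi^{-2}+\phi^{-2}[(1+(h_0-1))^{-2}-1]$ yields a main term plus an $O_\C(\omega^{-\frac12})$ correction. The main term is computed in closed form via the Bessel identity
\[ \tfrac{d}{dr}\Big[(Y_\nu/J_\nu)(\I\mu^{\frac12}r)\Big]=\tfrac{2}{\pi r J_\nu(\I\mu^{\frac12}r)^2}=\tfrac{2}{\pi}\phi(r)^{-2}, \]
so $\int_{c\omega^{-1/2}}^{r}\phi(s)^{-2}\,ds=\frac{\pi}{2}[(Y_\nu/J_\nu)(\I\mu^{\frac12}r)-(Y_\nu/J_\nu)(\I\mu^{\frac12}c\omega^{-\frac12})]$, and multiplying by $\frac{2}{\pi}v_0=\frac{2}{\pi}\phi(1+O_\C(\omega^{-\frac12}))$ produces exactly $\sqrt r\,[Y_\nu(\I\mu^{\frac12}r)+A(\omega,\nu)J_\nu(\I\mu^{\frac12}r)](1+O_\C(\omega^{-\frac12}))$ with $A(\omega,\nu)=-(Y_\nu/J_\nu)(\I\mu^{\frac12}c\omega^{-\frac12})=O_\C(\omega^0)$. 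The correction piece is bounded in absolute value by $\omega^{-\frac12}|v_0(r)\int_{c\omega^{-1/2}}^{r}\phi(s)^{-2}\,ds|\lesssim\omega^{-\frac12}|\psi(r)|$, which is absorbed into the multiplicative $1+O_\C(\omega^{-\frac12})$ factor.

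The main obstacle, in my view, is the uniformity across $\nu\in[0,\nu_0]$: the asymptotic constants in $J_\nu$ and $Y_\nu$ involve $\Gamma(\nu)$, which is singular at $\nu=0$, and for $\nu=0$ the function $Y_0$ has a logarithmic rather than power-law singularity at the origin. These must be handled by keeping $\nu$-independent bounds on the products $\phi\psi$ and $\phi^{-2}$ (which do stay uniform in $\nu$, since the apparent $1/\nu$ factor from $\Gamma(\nu)$ is killed by $(\I\mu^{\frac12}r)^{2\nu}-1=O(\nu\log r)$ when one subtracts $\phi(r)\psi(s)$ from $\phi(s)\psi(r)$, matching the $\nu=0$ log in the limit). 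The derivative bounds encoded by the $O_\C$-notation are the other minor point, but they follow from the symbol structure preserved by Volterra iteration (Remark \ref{rem:Volterra}).
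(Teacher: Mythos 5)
Your proposal is correct and follows essentially the same route as the paper: a Volterra iteration off the fundamental system $\{\sqrt{r}J_\nu(\I\mu^{1/2}r),\sqrt{r}Y_\nu(\I\mu^{1/2}r)\}$ for $v_0$, reduction of order with base point $c\omega^{-1/2}$ for $v_1$, and the same small-argument Bessel bounds to control the kernel (with the same integer-$\nu$/logarithmic caveat that the paper relegates to a footnote). The only cosmetic difference is that the paper identifies the homogeneous reduction-of-order solution $\psi_1$ in terms of $Y_\nu$ and $J_\nu$ by evaluating connection Wronskians at $r=c\omega^{-1/2}$, whereas you integrate $\phi^{-2}$ in closed form via $\tfrac{d}{dr}(Y_\nu/J_\nu)(\I\mu^{1/2}r)=\tfrac{2}{\pi}\phi(r)^{-2}$ — these are the same computation in disguise.
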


\begin{proof}
The Bessel function $J_\nu$ is holomorphic in $\C \backslash (-\infty,0]$ and hence,
by analytic continuation,  
\[ \psi_0(r,\omega):=\sqrt r J_\nu(\I \mu^\frac12 r) \]
is a solution to Eq.~\eqref{eq:rsmhom} with $\mu=b+\I \omega$.
Recall that all zeros of $J_\nu$ are real (\cite{Olv97}, p.~245, Theorem 6.2) and therefore,
$|\psi_0(r,\omega)|>0$ for all $r>0$.
Thus, we may set
\[ \psi_1(r,\omega):=-\psi_0(r,\omega)\int_r^{c\omega^{-1/2}}\psi_0(s,\omega)^{-2}ds \]
and this yields another solution to Eq.~\eqref{eq:rsmhom}. 
For the Wronskian of $\psi_0$ and $\psi_1$ we have
$W(\psi_0(\cdot,\omega),\psi_1(\cdot,\omega))=1$ which shows that
$\{\psi_0,\psi_1\}$ is a fundamental system for Eq.~\eqref{eq:rsmhom}.
For notational convenience we set $\tilde \psi_1(r,\omega):=\sqrt{r}Y_\nu(\I \mu^\frac12 r)$.
From $W(J_\nu,Y_\nu)(z)=\frac{2}{\pi z}$ we infer $W(\psi_0(\cdot,\omega),
\tilde \psi_1(\cdot,\omega))=\frac{2}{\pi}$ and thus, $\{\psi_0,\tilde \psi_1\}$ is
another fundamental system for Eq.~\eqref{eq:rsmhom}.
Consequently, we have the connection formula
\[ \psi_1(r,\omega)=\frac{W(\psi_1(\cdot,\omega),\tilde \psi_1(\cdot,\omega))}
{W(\psi_0(\cdot,\omega),\tilde \psi_1(\cdot,\omega))}\psi_0(r,\omega)
+\frac{W(\psi_1(\cdot,\omega),\psi_0(\cdot,\omega))}
{W(\tilde \psi_1(\cdot,\omega), \psi_0(\cdot,\omega))}\tilde \psi_1(r,\omega) \]
and by evaluation at $r=c\omega^{-\frac12}$ we find
\begin{align*} 
W(\psi_1(\cdot,\omega),\tilde \psi_1(\cdot,\omega))&=
-\frac{\tilde \psi_1(c\omega^{-\frac12},\omega)}{\psi_0(c\omega^{-\frac12},\omega)}
=-\frac{Y_\nu(\I c\mu^\frac12 \omega^{-\frac12})}{J_\nu(\I c\mu^\frac12 \omega^{-\frac12})}
=O_\C(\omega^0).
\end{align*}
This yields the representation
\[ \psi_1(r,\omega)=\tfrac{\pi}{2}\sqrt{r}Y_\nu(\I \mu^\frac12 r)
+O_\C(\omega^0)\sqrt{r}J_\nu(\I \mu^\frac12 r). \]

In order to construct $v_0$, 
we have to look for a solution of the integral equation
\begin{align*} 
v_0(r,\omega)=&\psi_0(r,\omega)-\psi_0(r,\omega)\int_0^r \psi_1(s,\omega)O_\C(\langle s\rangle^2)
v_0(s,\omega)ds \\
&+\psi_1(r,\omega)\int_0^r \psi_0(s,\omega)O_\C(\langle s\rangle^2)v_0(s,\omega)ds.
\end{align*}
Consequently, upon setting $v_0=\psi_0 h_0$, 
we obtain the Volterra equation
\begin{equation}
\label{eq:Volterrah0} h_0(r,\omega)=1+\int_0^r K(r,s,\omega)h_0(s,\omega)ds 
\end{equation}
with the kernel
\[ K(r,s,\omega)=\left [\frac{\psi_1(r,\omega)}{\psi_0(r,\omega)}\psi_0(s,\omega)^2
-\psi_0(s,\omega)\psi_1(s,\omega)\right ]O_\C(\langle s\rangle^2). \]
We have $|\psi_0(r,\omega)|\simeq \omega^{\frac{\nu}{2}}r^{\frac12+\nu}$ 
and\footnote{We assume
here for simplicity that $d$ is odd, i.e., $\nu=\frac{d}{2}+\ell-1$ is not an integer. 
In the case $d$ even one may encounter
a logarithmic loss but this does not affect the final result.}
$|\psi_1(r,\omega)|\simeq \omega^{-\frac{\nu}{2}}r^{\frac12-\nu}$
for $0<r\leq c\omega^{-\frac12}$ and $\omega \gg c^2$ and thus,
$|K(r,s,\omega)|\lesssim s$ for $0\leq s\leq r\leq c\omega^{-\frac12}$.
This implies
\[ \int_0^{c\omega^{-1/2}}
\sup_{r\in [s,c\omega^{-1/2}]}|K(r,s,\omega)|ds\lesssim \omega^{-\frac12} \]
and the standard result on Volterra equations yields a solution of Eq.~\eqref{eq:Volterrah0}
with the bound $|h_0(r,\omega)-1|\lesssim \omega^{-\frac12}$.
Since $|v_0(r,\omega)|>0$ for all $r \in (0,c\omega^{-\frac12}]$ provided $\omega$
is sufficiently large, a second solution $v_1$ is given by
\[ v_1(r,\omega)=-\tfrac{2}{\pi}v_0(r,\omega)\int_r^{c\omega^{-1/2}}v_0(s,\omega)^{-2}ds  \]
and it is straightforward to verify that it is indeed of the stated form.
\end{proof}

Unfortunately, we cannot directly glue together the fundamental systems from Proposition
\ref{prop:fsrbig} and Lemma \ref{lem:fsrsm}.
As a consequence, we still require another fundamental system which allows us to bridge
the gap $r\in [c\omega^{-\frac12},1]$.
The latter is obtained by perturbing Hankel functions $H^\pm_\nu=J_\nu\pm \I Y_\nu$.

\begin{lemma}
\label{lem:fsrsm2}
Let $\nu_0>0$ and $\mu=b+\I\omega$ where $b\in \R$ is fixed.
Then there exists a fundamental system $\{\tilde v_-,\tilde v_+\}$ for Eq.~\eqref{eq:rsm}
of the form
\[ \tilde v_\pm(r,\omega)=\sqrt r H_\nu^{\mp}(\I\mu^\frac12 r)[1+O_\C(r^0\omega^{-\frac12})] \]
for all $r\in [c\omega^{-\frac12},1]$, $\omega \gg c^2$, and $\nu\in [0,\nu_0]$,
provided $c\geq 1$ is sufficiently large.
\end{lemma}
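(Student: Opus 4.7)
The plan is to mimic the construction in Lemma \ref{lem:fsrsm}, but with the Bessel fundamental system replaced by the Hankel one, which is tailored to \emph{large} values of the argument $\I\mu^{\frac12}r$. On the range $r\in[c\omega^{-\frac12},1]$ one has $|\I\mu^{\frac12}r|\simeq \omega^{\frac12}r\geq c$, so the large-argument asymptotics of $H_\nu^\pm$ apply uniformly for $\omega\gg c^2$. I set
\[
\psi_\pm(r,\omega):=\sqrt{r}\,H_\nu^{\mp}(\I\mu^{\frac12}r),
\]
which by analytic continuation in $\mu$ solve the homogeneous equation \eqref{eq:rsmhom}. Using the standard identity $W(H_\nu^+,H_\nu^-)(z)=-\tfrac{4\I}{\pi z}$ together with the chain-rule computation $W(\sqrt{\cdot}\,A(\alpha\,\cdot),\sqrt{\cdot}\,B(\alpha\,\cdot))(r)=r\alpha\,W(A,B)(\alpha r)$ yields the constant Wronskian $W(\psi_-,\psi_+)=-\tfrac{4\I}{\pi}$.

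The next step is to obtain pointwise bounds on $\psi_\pm$ uniform in $\nu\in[0,\nu_0]$. Since $\mu^{\frac12}=\sqrt{\omega/2}\,(1+\I)+O(\omega^{-\frac12})$ for $\omega\gg 1$, the argument $z=\I\mu^{\frac12}r$ lies in the open upper half-plane with $\Im z\simeq r\omega^{\frac12}$ and $|z|\geq c$. The classical asymptotic expansion of $H_\nu^\pm$ in this regime (see e.g.\ \cite{Olv97,NIST}) gives
\[
|\psi_\pm(r,\omega)|\simeq \omega^{-\frac14}e^{\pm r\Re\sqrt{\mu}}, \qquad |\psi_-(s,\omega)\psi_+(s,\omega)|\simeq \omega^{-\frac12},
\]
together with the symbol-type derivative bounds needed to invoke Remark \ref{rem:Volterra}. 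The decisive consequence is
\[
\left|\tfrac{\psi_\mp(r,\omega)}{\psi_\pm(r,\omega)}\psi_\pm(s,\omega)^2\right|\simeq \omega^{-\frac12}\,e^{\mp 2(s-r)\Re\sqrt{\mu}},
\]
in which the exponential factor must be tamed by the direction of the Volterra iteration.

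I then write $\tilde v_\pm=\psi_\pm h_\pm$ and use variation of constants to reduce Eq.\ \eqref{eq:rsm} to two Volterra equations
\[
h_\pm(r,\omega)=1+\int_{a_\pm(r)}^{b_\pm(r)}K_\pm(r,s,\omega)\,h_\pm(s,\omega)\,ds,
\]
where for $h_-$ I integrate over $s\in[r,1]$ (so $s\geq r$) and for $h_+$ over $s\in[c\omega^{-\frac12},r]$ (so $s\leq r$). With these orientations the exponential factor above is bounded by $1$, and combined with $|\psi_-\psi_+|\simeq \omega^{-\frac12}$ one obtains the uniform bound $|K_\pm(r,s,\omega)|\lesssim \omega^{-\frac12}\langle s\rangle^2\lesssim \omega^{-\frac12}$ on the relevant triangles, so that
\[
\int_{c\omega^{-1/2}}^{1}\sup_{r}|K_\pm(r,s,\omega)|\,ds\lesssim \omega^{-\frac12}.
\]
The standard Volterra result (Remark \ref{rem:Volterra}) then yields $h_\pm$ with $|h_\pm(r,\omega)-1|\lesssim \omega^{-\frac12}$, from which the claimed representation of $\tilde v_\pm$ follows.

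The principal obstacle I anticipate is ensuring the Hankel asymptotics hold with sufficiently small remainders at the lower endpoint $r=c\omega^{-\frac12}$, where $|\I\mu^{\frac12}r|\simeq c$ is only as large as the chosen constant. This is exactly what forces the hypothesis that $c$ be taken sufficiently large: one selects $c$ so that the remainders in the asymptotic expansions of $H_\nu^\pm$ are uniformly small for $\nu\in[0,\nu_0]$, which simultaneously secures the pointwise bounds on $\psi_\pm$ and the smallness of the Volterra kernel needed above.
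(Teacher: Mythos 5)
Your proposal is correct and follows essentially the same route as the paper: one perturbs the Hankel fundamental system $\sqrt{r}\,H_\nu^{\mp}(\I\mu^{1/2}r)$ by a Volterra iteration on $[c\omega^{-1/2},1]$, using the large-argument Hankel asymptotics (available since $|\I\mu^{1/2}r|\geq c$ with $c$ large) to bound the kernel by $\omega^{-1/2}$. The only inessential differences are that the paper normalizes $\psi_\pm$ by constants $c_\nu^\pm$ so that $W(\psi_-,\psi_+)=1$, and obtains $\tilde v_+$ from $\tilde v_-$ via the reduction-of-order ansatz rather than by a second Volterra iteration anchored at the lower endpoint; both variants work.
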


\begin{proof}
We set
\[ \psi_\pm(r,\omega):=c_\nu^\pm \sqrt r H_\nu^\mp (\I \mu^\frac12 r),\quad \mu=b+\I\omega \]
where $c_\nu^\pm$ are constants which will be chosen below.
By analytic continuation, $\psi_\pm(\cdot,\omega)$ are solutions to Eq.~\eqref{eq:rsmhom}.
Furthermore, from the standard Hankel asymptotics 
\[ H_\nu^\pm(z)=\sqrt{\frac{2}{\pi z}}e^{\pm \I (z-\frac12 \nu\pi-\frac14 \pi)}[1+O_\C(z^{-1})] \]
we see that $c_\nu^\pm$ can be chosen in such a way that
\[ \psi_\pm(r,\omega)=\tfrac{1}{\sqrt 2}\mu^{-\frac14}e^{\pm \mu^{1/2}r}[1+O_\C(r^{-1}\omega^{-\frac12})] \]
provided $|\mu^\frac12 r|\geq 1$ and $\omega \gg 1$.
It follows that $W(\psi_-(\cdot,\omega),\psi_+(\cdot,\omega))=1$ and thus,
$\{\psi_\pm(\cdot,\omega)\}$ is a fundamental system for Eq.~\eqref{eq:rsmhom}.
Consequently, we intend to construct a solution of the integral equation
\begin{align*} \tilde v_-(r,\omega)=\psi_-(r,\omega)&+\psi_-(r,\omega)\int_r^1 
\psi_+(s,\omega)O_\C(\langle s\rangle^2)
\tilde v_-(s,\omega)ds  \\
& -\psi_+(r,\omega)\int_r^1 \psi_-(s,\omega)O_\C(\langle s\rangle^2)\tilde v_-(s,\omega)ds.
\end{align*}
The functions $\psi_\pm(\cdot,\omega)$ do not have zeros on $[c\omega^{-\frac12},\infty)$
if $c\geq 1$ is sufficiently large and thus, we may set
$\tilde v_-=\psi_- h_-$ and derive the Volterra equation
\[ h_-(r,\omega)=1+\int_r^1 K(r,s,\omega)h_-(s,\omega)ds \]
for the function $h_-$ where
\[ K(r,s,\omega)=\left [\psi_-(s,\omega)\psi_+(s,\omega)
-\frac{\psi_+(r,\omega)}{\psi_-(r,\omega)}\psi_-(s,\omega)^2 \right ]O_\C(\langle s\rangle^2). \]
We derive the bound
\[ |K(r,s,\omega)|\lesssim |\mu|^{-\frac12}+\left |\mu^{-\frac12}e^{-2\mu^{1/2}(s-r)}\right |
\lesssim \omega^{-\frac12} \]
provided $c\omega^{-\frac12}\leq r\leq s\leq 1$ and thus,
\[ \int_{c\omega^{-1/2}}^1 \sup_{r\in [c\omega^{-1/2},s]}|K(r,s,\omega)|ds \lesssim \omega^{-\frac12}.\]
Consequently, a standard Volterra iteration yields a solution $h_-$ with the bound
$|h_-(r,\omega)-1|\lesssim \omega^{-\frac12}$ for $r\in [c\omega^{-\frac12},1]$.
Dividing by $c_\nu^-$ yields the stated form of $\tilde v_-$.
The solution $\tilde v_+$ follows from the reduction ansatz, cf.~the proof of Proposition
\ref{prop:fsrbig}.
\end{proof}

Thanks to the global representation $H^\pm_\nu=J_\nu \pm \I Y_\nu$ it is easy to glue
together the fundamental systems from Lemmas \ref{lem:fsrsm} and \ref{lem:fsrsm2}.

\begin{corollary}
\label{cor:rsm}
Let $\nu_0>0$. Then
we have the representations
\begin{align*}
\tilde v_\pm(r,\omega)&=O_\C(\omega^0)v_0(r,\omega)\mp [\I+O_\C(\omega^{-\frac12})]
v_1(r,\omega) \\
v_0(r,\omega)&=[\tilde \beta_-+O_\C(\omega^{-\frac12})]\tilde v_-(r,\omega)
+[\tilde \beta_++O_\C(\omega^{-\frac12})]\tilde v_+(r,\omega)
\end{align*}
for all $r\in (0,1]$, $\omega \gg 1$, and $\nu\in [0,\nu_0]$ where
$\tilde \beta_\pm \in \C\backslash \{0\}$.
\end{corollary}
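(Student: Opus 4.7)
The plan is to exploit the fact that Lemmas \ref{lem:fsrsm} and \ref{lem:fsrsm2} describe four solutions of the same linear ODE \eqref{eq:rsm}. Although each representation is valid only on a sub-interval, each of $v_0, v_1, \tilde v_-, \tilde v_+$ extends uniquely by ODE theory to the full interval $(0, 1]$. Since $\{v_0, v_1\}$ is a fundamental system, there exist constants $a_\pm = a_\pm(\omega, \nu)$ and $b_\pm = b_\pm(\omega, \nu)$ with
\[ \tilde v_\pm(r, \omega) = a_\pm v_0(r, \omega) + b_\pm v_1(r, \omega) \qquad \text{for all } r \in (0, 1]. \]
The whole task then reduces to identifying $a_\pm, b_\pm$ to precision $O_\C(\omega^{-\frac12})$ and showing that the resulting $2 \times 2$ matrix is uniformly invertible for $\omega \gg 1$.

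To extract the coefficients I would use Wronskians. Since all four functions solve the same second-order linear ODE, their pairwise Wronskians are constant in $r$, and the reduction-ansatz construction in Lemma \ref{lem:fsrsm} yields $W(v_0, v_1) = \tfrac{2}{\pi}$. Hence
\[ a_\pm = \tfrac{\pi}{2} W(\tilde v_\pm, v_1), \qquad b_\pm = -\tfrac{\pi}{2} W(\tilde v_\pm, v_0), \]
which I would evaluate at the matching point $r_0 := c\omega^{-1/2}$, where all four representations are simultaneously available. Setting $z_0 := \I \mu^{1/2} r_0$, one has $z_0 \to c e^{3\I\pi/4}$ as $\omega \to \infty$, so $z_0$ remains in a compact subset of the open second quadrant on which $J_\nu(z_0), Y_\nu(z_0)$ and their derivatives are uniformly bounded for $\nu \in [0, \nu_0]$, while the Bessel Wronskian $W(J_\nu, Y_\nu)(z_0) = 2/(\pi z_0)$ stays bounded away from zero. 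Inserting the leading-order representations, using $H_\nu^\mp = J_\nu \mp \I Y_\nu$, and collecting the Bessel-Wronskian cancellations gives, after straightforward algebra,
\[ a_\pm = 1 \pm \I \gamma + O_\C(\omega^{-\tfrac12}) = O_\C(\omega^0), \qquad b_\pm = \mp \I + O_\C(\omega^{-\tfrac12}), \]
where $\gamma = \gamma(\omega, \nu) = O_\C(\omega^0)$ denotes the implicit coefficient of $J_\nu$ in the representation of $v_1$. This is exactly the first stated identity.

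For the second identity I would invert the change of basis. A short computation gives
\[ a_- b_+ - a_+ b_- = -2\I + O_\C(\omega^{-\tfrac12}), \]
in which the potentially large $\gamma$-contributions cancel exactly at leading order; the determinant is therefore bounded away from zero for $\omega \gg 1$, uniformly in $\nu \in [0, \nu_0]$. Solving then yields
\[ v_0 = \bigl[\tfrac{1}{2} + O_\C(\omega^{-\tfrac12})\bigr] \tilde v_- + \bigl[\tfrac{1}{2} + O_\C(\omega^{-\tfrac12})\bigr] \tilde v_+, \]
which is the claimed form with $\tilde \beta_\pm = \tfrac{1}{2} \neq 0$.

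The main technical obstacle will be the careful tracking of $O_\C(\omega^{-\frac12})$ errors through the two Wronskian computations at the matching point and verifying that the leading terms do not degenerate. The crucial structural point is that, although $\gamma$ may a priori be a large $O_\C(\omega^0)$ constant, it cancels from the leading part of the determinant; without this cancellation the inversion could fail and $\tilde \beta_\pm$ could depend nontrivially on $\nu$. A minor but necessary check is that $z_0$ stays away from the real zeros of $J_\nu$ and from the singularity of $Y_\nu$ at the origin, both of which are immediate since $z_0$ remains in a fixed compact subset of the open second quadrant.
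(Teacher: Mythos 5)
Your argument is correct and follows essentially the same route as the paper: both express the connection coefficients as constant Wronskians, evaluate them at the matching radius $r=c\omega^{-\frac12}$ using the Bessel/Hankel Wronskian identities, and observe that the $O_\C(\omega^0)$ admixture of $J_\nu$ in $v_1$ drops out of the relevant leading terms. The only (inessential) difference is that for the second identity you invert the $2\times 2$ connection matrix, whereas the paper writes $v_0$ directly via Wronskians with $\tilde v_\pm$; since your determinant is just $W(\tilde v_-,\tilde v_+)/W(v_0,v_1)$, the two computations coincide.
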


\begin{proof}
Since $\{v_0, v_1\}$ is a fundamental system for Eq.~\eqref{eq:rsm} and $\tilde v_-$
is a solution to that equation, there must exist connection coefficients $\alpha_j(\omega)$
such that
\[ \tilde v_-(r,\omega)=\alpha_0(\omega)v_0(r,\omega)+\alpha_1(\omega)v_1(r,\omega). \]
The coefficients are given by the Wronskian expressions
\[ \alpha_0(\omega)=\frac{W(\tilde v_-(\cdot,\omega),v_1(\cdot,\omega))}
{W(v_0(\cdot,\omega),v_1(\cdot,\omega))},\qquad
\alpha_1(\omega)=\frac{W(\tilde v_-(\cdot,\omega),v_0(\cdot,\omega))}
{W(v_1(\cdot,\omega),v_0(\cdot,\omega))}. \]
For the following computations it is useful to recall the formulae 
\begin{align*} W(pf,qg)&=W(p,q)fg+pqW(f,g) \\
W(f\circ \varphi,g\circ \varphi)&=[W(f,g)\circ \varphi]\varphi' .
\end{align*}
We have
\begin{align*} W(v_0(\cdot,\omega),v_1(\cdot,\omega))&=\I\mu^\frac12 rW(J_\nu,Y_\nu)(\I\mu^\frac12 r)
[1+O_\C(\omega^{-\frac12})] \\
&=\tfrac{2}{\pi}+O_\C(\omega^{-\frac12}).
\end{align*}
Furthermore, by evaluating the Wronskians at $r=c\omega^{-\frac12}$ we find
\begin{align*} 
W(\tilde v_-(\cdot,\omega),v_1(\cdot,\omega))&=\I\mu^\frac12 rW(H_\nu^+,
Y_\nu+O_\C(\omega^0)J_\nu)(\I\mu^\frac12 r)+O_\C(\omega^{-\frac12}) \\
&=O_\C(\omega^0) 
\end{align*}
and
\begin{align*} 
W(\tilde v_-(\cdot,\omega),v_0(\cdot,\omega))&=\I\mu^\frac12 rW(H_\nu^+,J_\nu)(\I\mu^\frac12 r)
+O_\C(\omega^{-\frac12}) \\
&=\I\mu^\frac12 rW(J_\nu+\I Y_\nu,J_\nu)(\I\mu^\frac12 r)
+O_\C(\omega^{-\frac12}) \\
&=-\tfrac{2\I}{\pi}+O_\C(\omega^{-\frac12}).
\end{align*}
Consequently, we infer $\alpha_0(\omega)=O_\C(\omega^0)$ and $\alpha_1(\omega)
=\I+O_\C(\omega^{-\frac12})$ as claimed.
The proof for $\tilde v_+$ is analogous.
For the representation of $v_0$ we use
\[ v_0(r,\omega)=\frac{W(v_0(\cdot,\omega),\tilde v_+(\cdot,\omega))}
{W(\tilde v_-(\cdot,\omega),\tilde v_+(\cdot,\omega))}\tilde v_-(r,\omega)
+\frac{W(v_0(\cdot,\omega),\tilde v_-(\cdot,\omega))}{W(\tilde v_+(\cdot,\omega),
\tilde v_-(\cdot,\omega))}\tilde v_+(r,\omega). \]
\end{proof}

\subsection{A global fundamental system for small angular momenta}
In order to obtain a global fundamental system, it suffices to derive
a representation of $v_-$ in terms of the basis $\{\tilde v_-,\tilde v_+\}$.

\begin{lemma}
\label{lem:fsrbig2}
Let $\nu_0>0$ and $\mu=b+\I\omega$ where $b\in \R$ is fixed.
Then we have the representation
\[ v_-(r,\omega)=\tilde \alpha_-e^{\mu^{1/2}}[1+O_\C(\omega^{-\frac12})]\tilde v_-(r,\omega)
+e^{-\mu^{1/2}}O_\C(\omega^{-\frac12})\tilde v_+(r,\omega) \]
for all $r\geq c\omega^{-\frac12}$, $\omega \gg c^2$, and $\nu\in [0,\nu_0]$, where 
$\tilde \alpha_-\in \C\backslash \{0\}$ and $c\geq 1$ is sufficiently large.
\end{lemma}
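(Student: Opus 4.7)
The plan is to exploit the fact that $v_-$ from Proposition \ref{prop:fsrbig} and $\{\tilde v_-,\tilde v_+\}$ from Lemma \ref{lem:fsrsm2} are all solutions of the same ODE \eqref{eq:spechom}, since \eqref{eq:rbig} and \eqref{eq:rsm} are merely rearrangements of this single normal-form equation. Since $\{\tilde v_\pm\}$ is a fundamental system, there exist unique constants such that
\[
v_-(r,\omega)=\tilde\alpha_-(\omega)\tilde v_-(r,\omega)+\tilde\alpha_+(\omega)\tilde v_+(r,\omega)
\]
holds wherever all three functions are defined (by ODE uniqueness, $\tilde v_\pm$ extend beyond the interval $[c\omega^{-\frac12},1]$ of Lemma \ref{lem:fsrsm2} to all $r\geq c\omega^{-\frac12}$). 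Because the ODE has no $v'$-term, the Wronskian of any two solutions is constant in $r$, giving
\[
\tilde\alpha_-(\omega)=\frac{W(v_-,\tilde v_+)}{W(\tilde v_-,\tilde v_+)},\qquad \tilde\alpha_+(\omega)=-\frac{W(v_-,\tilde v_-)}{W(\tilde v_-,\tilde v_+)},
\]
and I would compute all three Wronskians by evaluating them at $r=1$, the natural meeting point of the two fundamental systems.

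At $r=1$ one has $\xi(\mu^{-\frac12})=0$ directly from the definition of $\xi$, and $\xi'(\mu^{-\frac12})=(1+\mu^{-1}+\nu^2/\mu)^{\frac12}=1+O_\C(\omega^{-1})$ for $\nu\leq\nu_0$. Combined with the chain-rule expression for $\partial_r v_0^-$ from the proof of Proposition \ref{prop:fsrbig}, this yields
\[
v_-(1,\omega)=\tfrac{1}{\sqrt 2}\mu^{-\frac14}[1+O_\C(\omega^{-\frac12})],\qquad \partial_r v_-(1,\omega)=-\tfrac{1}{\sqrt 2}\mu^{\frac14}[1+O_\C(\omega^{-\frac12})].
\]
From the Hankel-exponential asymptotic form used inside the proof of Lemma \ref{lem:fsrsm2} (valid since $|\mu^{\frac12}|\geq\omega^{\frac12}\gg 1$),
\[
\tilde v_\pm(1,\omega)=\tfrac{1}{c_\nu^\pm\sqrt 2}\mu^{-\frac14}e^{\pm\mu^{1/2}}[1+O_\C(\omega^{-\frac12})],\quad \partial_r\tilde v_\pm(1,\omega)=\pm\tfrac{1}{c_\nu^\pm\sqrt 2}\mu^{\frac14}e^{\pm\mu^{1/2}}[1+O_\C(\omega^{-\frac12})].
\]
The two contributions to $W(v_-,\tilde v_+)(1)$ then add coherently, giving $\tfrac{1}{c_\nu^+}e^{\mu^{1/2}}[1+O_\C(\omega^{-\frac12})]$. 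The denominator $W(\tilde v_-,\tilde v_+)$ is most efficiently computed from $\tilde v_\pm=\psi_\pm h_\pm/c_\nu^\pm$ and the identity $W(\psi_-,\psi_+)=1$ already established in Lemma \ref{lem:fsrsm2}: expanding via $W(\psi_-h_-,\psi_+h_+)=W(h_-,h_+)\psi_-\psi_++h_-h_+W(\psi_-,\psi_+)$ together with $h_\pm=1+O_\C(\omega^{-\frac12})$ and $|\psi_-\psi_+|\lesssim\omega^{-\frac12}$ produces $W(\tilde v_-,\tilde v_+)=(c_\nu^- c_\nu^+)^{-1}[1+O_\C(\omega^{-1})]$. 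Substituting into the formula for $\tilde\alpha_-$ yields $\tilde\alpha_-(\omega)=c_\nu^-e^{\mu^{1/2}}[1+O_\C(\omega^{-\frac12})]$, so one sets $\tilde\alpha_-:=c_\nu^-\in\C\backslash\{0\}$, which is nonvanishing by construction in Lemma \ref{lem:fsrsm2}.

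The delicate part, and the main obstacle, is $W(v_-,\tilde v_-)$. Because both $v_-$ and $\tilde v_-$ represent the decaying branch, locally near $r=1$ each behaves like $e^{-\mu^{1/2}r}$, so the two leading contributions in $v_-\partial_r\tilde v_--(\partial_rv_-)\tilde v_-$ cancel exactly, producing $-\tfrac{1}{2c_\nu^-}e^{-\mu^{1/2}}+\tfrac{1}{2c_\nu^-}e^{-\mu^{1/2}}=0$ at leading order. Writing each factor as (leading)$\cdot(1+\epsilon_j)$ with $\epsilon_j=O_\C(\omega^{-\frac12})$ and collecting the surviving cross terms yields $W(v_-,\tilde v_-)=\tfrac{1}{c_\nu^-}e^{-\mu^{1/2}}O_\C(\omega^{-\frac12})$; to carry this out rigorously one must use that the $O_\C$-notation in Proposition \ref{prop:fsrbig} and Lemma \ref{lem:fsrsm2} supplies the symbol-type derivative bounds needed when differentiating the subleading corrections, not merely pointwise bounds on the functions. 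Inserting this into the formula for $\tilde\alpha_+$ gives $\tilde\alpha_+(\omega)=e^{-\mu^{1/2}}O_\C(\omega^{-\frac12})$, completing the representation; its validity for all $r\geq c\omega^{-\frac12}$ follows from the constancy of the Wronskians together with ODE uniqueness.
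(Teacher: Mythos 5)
Your proposal is correct and follows essentially the same route as the paper: express $v_-$ in the basis $\{\tilde v_-,\tilde v_+\}$ via Wronskian quotients, evaluate all Wronskians at $r=1$ using $\xi(\mu^{-1/2})=0$ for $v_-$ and the Hankel asymptotics for $\tilde v_\pm$, and observe that the two leading terms add in $W(v_-,\tilde v_+)$ but cancel in $W(v_-,\tilde v_-)$, leaving $e^{-\mu^{1/2}}O_\C(\omega^{-1/2})$. The only differences are cosmetic bookkeeping of the constants $c_\nu^\pm$ and your (unneeded, and not fully justified) sharpening of the error in $W(\tilde v_-,\tilde v_+)$ to $O_\C(\omega^{-1})$ where $O_\C(\omega^{-1/2})$ suffices.
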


\begin{proof}
We have 
\[ v_-(r,\omega)=\tilde \alpha_-(\omega)\tilde v_-(r,\omega)+\tilde \alpha_+(\omega)\tilde v_+(r,\omega) \]
where 
\[ \tilde \alpha_-(\omega)=\frac{W(v_-(\cdot,\omega),\tilde v_+(\cdot,\omega))}{W(\tilde v_-(\cdot,\omega),
\tilde v_+(\cdot,\omega))},\qquad
\tilde \alpha_+(\omega)=\frac{W(v_-(\cdot,\omega),\tilde v_-(\cdot,\omega))}{W(\tilde v_+(\cdot,\omega),
\tilde v_-(\cdot,\omega))}. \]
We will calculate these Wronskians by evaluation at $r=1$.
From Proposition \ref{prop:fsrbig} and $\xi(\mu^{-\frac12})=0$, see Eq.\ \eqref{eq:xi}, we infer
\begin{align*} 
v_-(1,\omega)&=\tfrac{1}{\sqrt 2}\mu^{-\frac14}[1+O_\C(\omega^{-\frac12})] \\
v_-'(1,\omega)&=-\tfrac{1}{\sqrt 2}\mu^\frac14 [1+O_\C(\omega^{-\frac12})]. 
\end{align*}
Furthermore, the Hankel asymptotics imply
\begin{align*} 
\tilde v_\pm(1,\omega)&=c_\nu^\pm \mu^{-\frac14}e^{\pm \mu^{1/2}}[1+O_\C(\omega^{-\frac12})] \\
\tilde v_\pm'(1,\omega)&=\pm c_\nu^\pm \mu^{\frac14}e^{\pm \mu^{1/2}}[1+O_\C(\omega^{-\frac12})]
\end{align*}
for suitable constants $c_\nu^\pm\in \C\backslash \{0\}$.
This yields 
\[ W(\tilde v_-(\cdot,\omega),\tilde v_+(\cdot,\omega))=2c_\nu^-c_\nu^++O_\C(\omega^{-\frac12}) \]
and
\begin{align*}
W(v_-(\cdot,\omega),\tilde v_+(\cdot,\omega))&=\sqrt 2 c_\nu^+e^{\mu^{1/2}}[1+O_\C(\omega^{-\frac12})] \\
W(v_-(\cdot,\omega),\tilde v_-(\cdot,\omega))&=e^{-\mu^{1/2}}O_\C(\omega^{-\frac12})
\end{align*}
which implies the claim.
\end{proof}

It is now a simple matter to obtain global representations for $v_0$ and $v_-$.
\begin{lemma}
\label{lem:fs}
Let $\nu_0>0$ and $\mu=b+\I\omega$ where $b\in \R$ is fixed. 
Then we have the representations
\begin{align*} v_-(r,\omega)&=e^{\mu^{1/2}}O_\C(\omega^0)v_0(r,\omega) 
+ \alpha_1 e^{\mu^{1/2}}[1+O_\C(\omega^{-\frac12})]
v_1(r,\omega) \\
v_0(r,\omega)&=e^{\mu^{1/2}}O_\C(\omega^{-\frac12})v_-(r,\omega) 
+\alpha_+ e^{\mu^{1/2}}[1+O_\C(\omega^{-\frac12})]v_+(r,\omega)
\end{align*}
for all $r>0$, $\omega \gg 1$, and $\nu\in [0,\nu_0]$, where $\alpha_1, \alpha_+ \in \C\backslash\{0\}$.
\end{lemma}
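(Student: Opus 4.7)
The strategy is to combine Lemma \ref{lem:fsrbig2} and Corollary \ref{cor:rsm} (together with an analogue of the former for $v_+$) by direct substitution, exploiting the fact that for $\mu = b+\I\omega$ with $\omega \gg 1$ one has $\Re\mu^{1/2} \simeq \sqrt{\omega}$, so that $|e^{-2\mu^{1/2}}| = e^{-2\Re\mu^{1/2}}$ decays faster than any $\omega^{-N}$. Consequently every term of the form $e^{-\mu^{1/2}} O_\C(\omega^\gamma)$ may be rewritten as $e^{\mu^{1/2}}\cdot e^{-2\mu^{1/2}} O_\C(\omega^\gamma)$ and absorbed into an error of type $e^{\mu^{1/2}} O_\C(\omega^{-N})$ for any $N$.

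For the first identity I would plug the expressions for $\tilde v_\pm$ from Corollary \ref{cor:rsm} into Lemma \ref{lem:fsrbig2}. The coefficient of $v_0$ becomes $e^{\mu^{1/2}} O_\C(\omega^0) + e^{-\mu^{1/2}} O_\C(\omega^{-1/2})$, which collapses to $e^{\mu^{1/2}} O_\C(\omega^0)$ by the above remark. The coefficient of $v_1$ reduces to $-\I\tilde\alpha_- e^{\mu^{1/2}}[1+O_\C(\omega^{-1/2})]$, so setting $\alpha_1 := -\I\tilde\alpha_- \in \C\backslash\{0\}$ yields the claim.

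For the second identity I first derive a connection formula for $v_+$ paralleling Lemma \ref{lem:fsrbig2}. Using $\xi(\mu^{-1/2}) = 0$, Proposition \ref{prop:fsrbig} gives $v_+(1,\omega) = \tfrac{1}{\sqrt 2}\mu^{-1/4}[1+O_\C(\omega^{-1/2})]$ and $v_+'(1,\omega) = \tfrac{1}{\sqrt 2}\mu^{1/4}[1+O_\C(\omega^{-1/2})]$; combining with the Hankel-type asymptotics used in the proof of Lemma \ref{lem:fsrbig2} and evaluating $W(v_+,\tilde v_\mp)$ at $r=1$ one obtains
\[ v_+(r,\omega) = e^{\mu^{1/2}} O_\C(\omega^{-1/2})\,\tilde v_-(r,\omega) + \tilde\alpha_+ e^{-\mu^{1/2}}[1+O_\C(\omega^{-1/2})]\,\tilde v_+(r,\omega) \]
with $\tilde\alpha_+ \in \C\backslash\{0\}$; the leading terms in $W(v_+,\tilde v_+)|_{r=1}$ cancel to produce the $O_\C(\omega^{-1/2})$ factor in the first coefficient. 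The $2\times 2$ matrix relating $(v_-,v_+)$ to $(\tilde v_-,\tilde v_+)$ then has determinant $\tilde\alpha_-\tilde\alpha_+[1+O_\C(\omega^{-1/2})] \neq 0$, so inverting it yields
\begin{align*}
\tilde v_-(r,\omega) &= \tfrac{1}{\tilde\alpha_-}e^{-\mu^{1/2}}[1+O_\C(\omega^{-1/2})]\,v_-(r,\omega) + e^{-\mu^{1/2}}O_\C(\omega^{-1/2})\,v_+(r,\omega), \\
\tilde v_+(r,\omega) &= e^{\mu^{1/2}}O_\C(\omega^{-1/2})\,v_-(r,\omega) + \tfrac{1}{\tilde\alpha_+}e^{\mu^{1/2}}[1+O_\C(\omega^{-1/2})]\,v_+(r,\omega).
\end{align*}
Substituting into the expansion of $v_0$ from Corollary \ref{cor:rsm} and once more absorbing every $e^{-\mu^{1/2}}$ prefactor via $e^{-\mu^{1/2}} = e^{\mu^{1/2}} e^{-2\mu^{1/2}}$, the coefficient of $v_-$ becomes $e^{\mu^{1/2}} O_\C(\omega^{-1/2})$ while the coefficient of $v_+$ becomes $\alpha_+ e^{\mu^{1/2}}[1+O_\C(\omega^{-1/2})]$ with $\alpha_+ = \tilde\beta_+/\tilde\alpha_+ \in \C\backslash\{0\}$.

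The only conceptually nontrivial step is the auxiliary connection formula for $v_+$: the cancellation in $W(v_+,\tilde v_+)|_{r=1}$ is exactly what produces the subleading $\omega^{-1/2}$ factor that governs the first term of the second identity. The rest is bookkeeping of $O_\C$-expressions, kept clean throughout by the super-exponential smallness of $e^{-2\mu^{1/2}}$.
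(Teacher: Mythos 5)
Your proposal is correct and follows essentially the same route as the paper: the first identity by substituting Corollary \ref{cor:rsm} into Lemma \ref{lem:fsrbig2}, and the second by evaluating the Wronskians of $v_\pm$ against $\tilde v_\pm$ at $r=1$ (where the leading terms in $W(v_+,\tilde v_+)$ indeed cancel to give the $O_\C(\omega^{-\frac12})$ factor) and then feeding the resulting expressions for $\tilde v_\pm$ into Corollary \ref{cor:rsm}, with $|e^{-2\mu^{1/2}}|=e^{-2\Re\mu^{1/2}}\lesssim e^{-c\sqrt\omega}$ absorbing all recessive cross terms. The paper packages the second step as a direct Cramer-type change of basis rather than your explicit $2\times 2$ matrix inversion, but the computations are identical.
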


\begin{proof}
The claimed representation of $v_-$ is a consequence of Corollary \ref{cor:rsm} and Lemma
\ref{lem:fsrbig2}.
For the representation of $v_0$ we note that
\[ \tilde v_\pm(r,\omega)=\frac{W(\tilde v_\pm(\cdot,\omega),v_+(\cdot,\omega))}
{W(v_-(\cdot,\omega),v_+(\cdot,\omega))}v_-(r,\omega)
+\frac{W(\tilde v_\pm(\cdot,\omega),v_-(\cdot,\omega))}
{W(v_+(\cdot,\omega),v_-(\cdot,\omega))}v_+(r,\omega)
\]
and from Proposition \ref{prop:fsrbig} we infer
\[ W(v_-(\cdot,\omega),v_+(\cdot,\omega))=1+O_\C(\omega^{-\frac12}). \]
Furthermore, as in the proof of Lemma \ref{lem:fsrbig2} we obtain
\begin{align*}
W(v_-(\cdot,\omega),\tilde v_+(\cdot,\omega))&=\sqrt 2 c_\nu^+ e^{\mu^{1/2}}[1+O_\C(\omega^{-\frac12})] \\
W(v_-(\cdot,\omega),\tilde v_-(\cdot,\omega))&=e^{-\mu^{1/2}}O_\C(\omega^{-\frac12}) \\
W(\tilde v_-(\cdot,\omega),v_+(\cdot,\omega))&=\sqrt 2 c_\nu^-e^{-\mu^{1/2}}[1+O_\C(\omega^{-\frac12})] \\
W(\tilde v_+(\cdot,\omega), v_+(\cdot,\omega))&=e^{\mu^{1/2}}O_\C(\omega^{-\frac12})
\end{align*}
and the claim follows from Corollary \ref{cor:rsm}.
\end{proof}

\subsection{A fundamental system near the center for large angular momenta}
The fundamental systems constructed in Section \ref{sec:rsm} are not useful as $\nu\to\infty$
since the error terms are not controlled in this limit.
Thus, we need yet another construction which covers the case of large angular momenta.
We rewrite Eq.~\eqref{eq:spechom2} as\footnote{It is a well known ``trick'' from the asymptotic
theory of Bessel functions to leave the singular term $-\frac{1}{4r^2}v(r)$ on the right-hand side,
see \cite{Olv97}.}
\begin{equation}
\label{eq:rsmnlg1}
v''(r)-\tfrac{\nu^2}{r^2}v(r)-\mu v(r)
=-\tfrac{1}{4r^2}v(r)+O_\C(\langle r\rangle^2)v(r)
\end{equation}
and consider the ``homogeneous'' version
\begin{equation}
\label{eq:rsmnlghom1}
v''(r)-\tfrac{\nu^2}{r^2}v(r)-\mu v(r)=0.
\end{equation}
We rescale by introducing $v(r)=w(\nu^{-1}\mu^\frac12  r)$ with $\mu>0$, $\nu\geq 1$,
which yields
\[ w''(y)-\nu^2(1+\tfrac{1}{y^2}) w(y)=0 \]
where $y=\nu^{-1}\mu^\frac12 r$. 
The Liouville-Green transform $\tilde w(\zeta(y))=|\zeta'(y)|^\frac12 w(y)$ leads to
\[ \tilde w''(\zeta(y))-\nu^2 \frac{1+\frac{1}{y^2}}{\zeta'(y)^2}\tilde w(\zeta(y))
-\frac{q(y)}{\zeta'(y)^2}\tilde w(\zeta(y))=0 \]
with
\[ q(y)=\frac12 \frac{\zeta'''(y)}{\zeta'(y)}-\frac34 \frac{\zeta''(y)^2}{\zeta'(y)^2}. \]
Consequently, we would like to have
\[ \zeta'(y)=\sqrt{1+\tfrac{1}{y^2}} \]
and
\begin{equation}
\label{def:zeta} \zeta(y)=\sqrt{1+y^2}+\log\frac{y}{1+\sqrt{1+y^2}}
+\gamma 
\end{equation}
does the job, where $\gamma\in \C$ is a free constant which we will choose in a moment.
With this choice of $\zeta$ we obtain
\[ q(y)=\frac{1+6y^2}{4y^2(1+y^2)^2} \]
and the equation
\[ w''(y)-\nu^2(1+\tfrac{1}{y^2})w(y)+q(y)w(y)=0 \]
has the fundamental system $\zeta'(y)^{-\frac12}e^{\pm \nu\zeta(y)}$.
Consequently, the equation
\begin{equation}
\label{eq:rsmnlghom}
 v''(r)-\tfrac{\nu^2}{r^2}v(r)+\alpha^2 q(\alpha r)v(r)-\mu v(r)=0,
\qquad \alpha=\nu^{-1}\mu^{\frac12} 
\end{equation}
has the fundamental system $\zeta'(\alpha r)^{-\frac12}e^{\pm \nu\zeta(\alpha r)}$
and by choosing $\gamma$ in Eq.~\eqref{def:zeta} accordingly, we may normalize
such that\footnote{Again, one should write $\zeta(\alpha r;\alpha)$ but for
brevity we use the sloppier $\zeta(\alpha r)$.} $\zeta(\alpha)=0$.
This suggests to rewrite Eq.~\eqref{eq:rsmnlg1} as
\begin{align}
\label{eq:rsmnlg}
v''(r)&-\tfrac{\nu^2}{r^2}v(r)+\alpha^2 q(\alpha r)v(r)-\mu v(r)
=\alpha^2 \tilde q(\alpha r)v(r)+O_\C(\langle r\rangle^2)v(r)
\end{align}
with
\begin{align*} 
\alpha^2 \tilde q(\alpha r)&=\alpha^2 q(\alpha r)-\tfrac{1}{4r^2} 
=\alpha^2 \frac{4-\alpha^2 r^2}{4(1+\alpha^2 r^2)^2}.
\end{align*}
We emphasize that the function $\alpha^2 \tilde q(\alpha r)$ is regular at $r=0$ which
is crucial for the following.
This is the reason why one has to leave the term $-\frac{1}{4r^2}v(r)$ on the right-hand side
of Eq.~\eqref{eq:rsmnlg1}.

\subsubsection{Analysis of $\zeta$ and $\tilde q$}
As before, we need the analytic continuations of $\zeta(\alpha r)$ and $\alpha^2 \tilde q(\alpha r)$
for $\alpha=\nu^{-1}\mu^\frac12$ with $\mu=b+\I\omega$.
The analytic continuation of $\tilde q$ is manifest since it is a rational function.
Furthermore, the arguments of the square root and the logarithm in $\zeta(\alpha r)$
stay in $\C\backslash (-\infty,0]$ for all $r\geq 0$, $\omega \gg 1$, and $\nu\geq 1$.
Consequently, the desired analytic continuation is obtained by using principal branches.

\begin{lemma}
\label{lem:tildeq}
Let $\mu=b+\I\omega$ and $\alpha=\nu^{-1}\mu^\frac12$ where $b\in \R$ is fixed.
Then we have the bound
\[ |\alpha^2 \tilde q(\alpha r)|\lesssim |\alpha|^2 \langle \alpha r\rangle^{-2} \]
for all $r\geq 0$, $\omega \gg 1$, and $\nu\geq 1$.
\end{lemma}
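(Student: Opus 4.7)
The plan is to reduce the stated inequality to a purely algebraic estimate for a single complex variable. Writing $z := \alpha^2 r^2 = \mu (r/\nu)^2$, one has $|z| = |\alpha|^2 r^2 = |\alpha r|^2$, so $\langle \alpha r\rangle^2 = 1 + |z|$. The explicit formula
\[
\alpha^2 \tilde q(\alpha r) = \frac{\alpha^2 (4 - z)}{4(1+z)^2}
\]
then shows that the claim is equivalent to
\[
\frac{|4-z|}{|1+z|^2} \lesssim \frac{1}{1 + |z|},
\]
uniformly in the range of $z$ that arises. Since the numerator is trivially controlled by $|4 - z| \leq 4(1 + |z|)$, everything hinges on a uniform lower bound for the denominator of the form $|1+z|^2 \gtrsim (1+|z|)^2$.

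The next step is to establish this denominator bound. Note that $z = \mu x$ with $x = r^2/\nu^2 \in [0,\infty)$, independent of $\nu \geq 1$; thus it suffices to prove that
\[
|1 + \mu x|^2 \geq c\,(1 + |\mu| x)^2, \qquad x \geq 0,
\]
for some $c > 0$, provided $\omega \gg 1$. This is an elementary computation of exactly the type already carried out in \eqref{eq:estx}: expanding $|1+\mu x|^2 = (1+bx)^2 + \omega^2 x^2 = 1 + 2bx + |\mu|^2 x^2$, one finds that for any $c \in (0,1)$ the quadratic
\[
|1 + \mu x|^2 - c(1 + |\mu| x)^2 = (1 - c) - 2(c|\mu| - b) x + (1-c)|\mu|^2 x^2
\]
has nonnegative discriminant on $[0,\infty)$ as soon as $|b| \leq (1 - 2c)|\mu|$, which holds with $c = \tfrac14$ once $\omega$ is sufficiently large. (For $b \geq 0$ no smallness condition is needed, since then even $|1 + \mu x|^2 \geq 1 + |\mu|^2 x^2 \geq \tfrac12 (1 + |\mu| x)^2$.)

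Combining the two ingredients,
\[
|\alpha^2 \tilde q(\alpha r)| \;=\; \frac{|\alpha|^2}{4}\,\frac{|4 - z|}{|1+z|^2}
\;\lesssim\; |\alpha|^2 \,\frac{1 + |z|}{(1+|z|)^2}
\;=\; \frac{|\alpha|^2}{\langle \alpha r\rangle^2},
\]
which is the claimed bound. There is no real obstacle here: the lemma is essentially a restatement of \eqref{eq:estx} applied with the positive real parameter $x = r^2/\nu^2$, and the uniformity in $\nu \geq 1$ and $r \geq 0$ is automatic because the auxiliary bound on $|1 + \mu x|$ is uniform in $x \in [0,\infty)$. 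The only subtlety worth flagging is that the threshold $\omega \gg 1$ depends on $b$ but not on $\nu$ or $r$, which is precisely what the statement requires.
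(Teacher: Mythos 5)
Your proof is correct and follows essentially the same route as the paper: both reduce the claim to the elementary lower bound $|1+\mu x|^2\gtrsim 1+|\mu|^2x^2\simeq(1+|\mu|x)^2$ with $x=r^2/\nu^2\geq 0$, exactly as in \eqref{eq:estx}, and then bound the numerator trivially. (Minor wording slip: you want the quadratic $(1-c)-2(c|\mu|-b)x+(1-c)|\mu|^2x^2$ to be \emph{nonnegative} on $[0,\infty)$, which for the nontrivial sign of the linear coefficient requires a \emph{nonpositive} discriminant; the substance of your computation is right.)
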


\begin{proof}
The statement follows from 
\begin{align*} 
|1+\mu x|^2&=1+2\Re \mu x+|\mu|^2 x^2=1+\tfrac{2b}{|\mu|}|\mu|x+|\mu|^2 x^2 \\
&\geq 1-\frac{2b^2}{b^2+\omega^2}+\tfrac12 |\mu|^2 x^2 \\
&\geq \tfrac12 (1+|\mu|^2 x^2)
\end{align*}
which is true for all $x\in \R$ and $\omega \gg 1$.
\end{proof}

The only information on $\zeta$ we are going to use is the following monotonicity property.

\begin{lemma}
\label{lem:zeta}
Let $\mu=b+\I\omega$ and $\alpha=\nu^{-1}\mu^\frac12$ where $b\geq 0$ is fixed. Then we have
\[ \partial_r \Re \zeta(\alpha r)\geq 0 \]
for all $r>0$, $\omega \gg 1$, and $\nu\geq 1$.
\end{lemma}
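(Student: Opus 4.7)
The plan is to reduce the monotonicity to the elementary fact that the principal square root has non-negative real part on $\C\setminus(-\infty,0]$, by compressing $\partial_r\zeta(\alpha r)$ into a single principal root.

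First I would apply the chain rule together with $\zeta'(y)=\sqrt{1+1/y^2}$ from the construction of $\zeta$ to obtain
\[ \partial_r\zeta(\alpha r)=\alpha\sqrt{1+\tfrac{\nu^2}{\mu r^2}}. \]
Next I would use the multiplicativity formula $\sqrt{z}\sqrt{w}=\sqrt{zw}$ (valid when $\Re z\ge 0$ and $\Re w>0$, as recalled after \eqref{eq:sqrt}) to pull $\alpha$ inside the square root. Here $z=\alpha^2=\mu/\nu^2$ has $\Re z=b/\nu^2\ge 0$, while $w=1+\nu^2/(\mu r^2)$ has $\Re w\ge 1$ because $\Re(\nu^2/\mu)=\nu^2 b/|\mu|^2\ge 0$. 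Since $\Re \mu^{1/2}>0$ for $\omega\gg 1$, one has $\Re\alpha>0$ and therefore $\sqrt{\alpha^2}=\alpha$. Applying the identity twice (once to fold $\alpha$ inside and once to factor the positive real constant $\nu r$ out) yields
\[ \partial_r\zeta(\alpha r)=\sqrt{\alpha^2+\tfrac{1}{r^2}}=\frac{1}{\nu r}\sqrt{\mu r^2+\nu^2}. \]

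The conclusion then follows immediately from \eqref{eq:sqrt} applied to $z=\mu r^2+\nu^2$:
\[ \partial_r\Re\zeta(\alpha r)=\frac{1}{\nu r}\cdot\frac{1}{\sqrt 2}\sqrt{|\mu r^2+\nu^2|+br^2+\nu^2}\ge 0, \]
since $b\ge 0$ makes $\Re(\mu r^2+\nu^2)=br^2+\nu^2$ non-negative.

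I expect no serious obstacle beyond the bookkeeping with the principal branch: at each step one must verify that the relevant arguments lie in the region where $\sqrt{z}\sqrt{w}=\sqrt{zw}$ is legitimate and that $\sqrt{\alpha^2}$ really equals $\alpha$. The hypothesis $b\ge 0$ enters precisely to keep these real parts non-negative, in exact parallel with its role in the proof of Lemma \ref{lem:xi} (where only the case $b\ge 0$ is treated as well).
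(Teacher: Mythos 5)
Your proposal is correct, but it follows a genuinely different route from the paper. The paper works with the explicit antiderivative \eqref{def:zeta}: it writes $\Re\zeta(\alpha r)$ as the sum of $\Re\sqrt{1+\alpha^2r^2}$ and $\Re\log\frac{\alpha r}{1+\sqrt{1+\alpha^2r^2}}$ (plus a constant) and shows each summand is non-decreasing in $r$ — the square-root part via the explicit real-part formula \eqref{eq:sqrt} together with the monotonicity of $|1+\alpha^2r^2|^2=1+2\nu^{-2}br^2+|\alpha|^4r^4$ for $b\geq 0$, and the logarithm part by differentiating to get $\Re\frac{1}{r\sqrt{1+\alpha^2r^2}}\geq 0$. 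You instead differentiate first, using $\zeta'(y)=\sqrt{1+1/y^2}$ (which the paper itself relies on in the proof of Lemma \ref{lem:fsrsmnlg}), and then use the branch bookkeeping $\sqrt{\alpha^2}=\alpha$ and $\sqrt{z}\sqrt{w}=\sqrt{zw}$ to compress $\partial_r\zeta(\alpha r)$ into the single principal root $\frac{1}{\nu r}\sqrt{\mu r^2+\nu^2}$, whose real part is non-negative by \eqref{eq:sqrt}. Your verifications of the hypotheses for the multiplicativity formula ($\Re\alpha^2=b/\nu^2\geq 0$, $\Re(1+\nu^2/(\mu r^2))\geq 1$, $\Re\alpha>0$) are all correct, and the hypothesis $b\geq 0$ enters in exactly the right places. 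In fact your argument is the precise analogue of the paper's own proof of Lemma \ref{lem:xi}, so it is arguably the more uniform choice; what the paper's version buys instead is that it avoids any appeal to multiplicativity of the principal branch, at the cost of handling the square-root and logarithm terms separately.
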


\begin{proof}
We have
\begin{align*} 
\Re \zeta(\alpha r)&=\Re \sqrt{1+\alpha^2 r^2}+\Re \log \frac{\alpha r}{1+\sqrt{1+\alpha^2 r^2}}
+c_\alpha \\
&=\tfrac{1}{\sqrt 2}\sqrt{|1+\alpha^2 r^2|+1+\nu^{-2}b r^2}+\Re \log \frac{\alpha r}{1+\sqrt{1+\alpha^2 r^2}}+c_\alpha
\end{align*}
where $c_\alpha$ is independent of $r$.
Since $|1+\alpha^2 r^2|^2=1+2\nu^{-2}br^2+|\alpha|^4 r^4$ and $b\geq 0$, it is evident
that the square root is monotonically increasing.
Thus, it suffices to consider the logarithm.
We have
\begin{align*}
  \partial_r \Re \log \frac{\alpha r}{1+\sqrt{1+\alpha^2 r^2}}&=
 \Re \partial_r \log \frac{\alpha r}{1+\sqrt{1+\alpha^2 r^2}}  \\
 &=\Re \frac{1}{r\sqrt{1+\alpha^2 r^2}} \\
 &\geq 0.
  \end{align*}
\end{proof}

\subsubsection{A fundamental system for Eq.~\eqref{eq:rsmnlg}}
The homogeneous equation \eqref{eq:rsmnlghom} has the fundamental system
\[ \tfrac{1}{\sqrt 2}\mu^{-\frac14}\zeta'(\alpha r)^{-\frac12}e^{\pm \nu\zeta(\alpha r)},
\qquad \alpha=\nu^{-1}\mu^\frac12. \]
Now we construct a perturbative fundamental system for Eq.~\eqref{eq:rsmnlg}.

\begin{lemma}
\label{lem:fsrsmnlg}
Let $\mu=b+\I\omega$ and $\alpha=\nu^{-1}\mu^\frac12$ where $b\geq 0$ is fixed.
Then Eq.~\eqref{eq:rsmnlg} has a fundamental system $\{\hat v_0,\hat v_1\}$ of the form
\begin{align*}
\hat v_0(r,\omega)&=\tfrac{1}{\sqrt 2}\mu^{-\frac14}\zeta'(\alpha r)^{-\frac12}e^{\nu \zeta(\alpha r)}
[1+O_\C(r^0 \omega^{-\frac12}+\nu^{-1})] \\
\hat v_1(r,\omega)&=\tfrac{1}{\sqrt 2}\mu^{-\frac14}\zeta'(\alpha r)^{-\frac12}e^{-\nu \zeta(\alpha r)}
[1+O_\C(r^0 \omega^{-\frac12}+\nu^{-1})] 
\end{align*}
for all $r\in (0,1]$, $\omega \gg 1$, and $\nu\geq 1$.
\end{lemma}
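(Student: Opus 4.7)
My plan is to construct $\hat v_0$ and $\hat v_1$ as Volterra perturbations of the Liouville--Green fundamental system
\[
\psi_\pm(r,\omega) := \tfrac{1}{\sqrt 2}\mu^{-\frac14}\zeta'(\alpha r)^{-\frac12} e^{\pm \nu \zeta(\alpha r)}
\]
for the homogeneous equation \eqref{eq:rsmnlghom}, closely following the template of Proposition \ref{prop:fsrbig}. A direct chain-rule computation using $\partial_r\zeta(\alpha r)=\alpha\zeta'(\alpha r)$ together with the relation $\nu\alpha=\mu^{1/2}$ gives $W(\psi_-,\psi_+)=1$, which is the point of the normalization factor $\tfrac{1}{\sqrt 2}\mu^{-\frac14}$. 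Because the logarithm in \eqref{def:zeta} forces $\Re\zeta(\alpha r)\to -\infty$ as $r\to 0^+$, $\psi_+$ is the distinguished small solution at the origin while $\psi_-$ blows up there; I will therefore set up $\hat v_0$ by a Volterra iteration anchored at $r=0$ and $\hat v_1$ by an analogous one anchored at $r=1$.

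For $\hat v_0$ I write $\hat v_0=\psi_+ h_0$ and apply variation of parameters with the forcing $F(s)\hat v_0(s)$, where $F(s):=\alpha^2\tilde q(\alpha s)+O_\C(\langle s\rangle^2)$ is the right-hand side of \eqref{eq:rsmnlg}. This produces the Volterra problem
\[
h_0(r,\omega)=1+\int_0^r K(r,s,\omega) h_0(s,\omega)\,ds,\qquad
K(r,s,\omega)=\bigl[\psi_-\psi_+(s)-\tfrac{\psi_-(r)}{\psi_+(r)}\psi_+(s)^2\bigr] F(s).
\]
Monotonicity of $\Re\zeta$ (Lemma \ref{lem:zeta}) yields $\Re\zeta(\alpha s)\leq\Re\zeta(\alpha r)$ for $s\leq r$, so both bracketed terms in $K$ are dominated by $|\psi_-\psi_+(s)|\lesssim|\mu|^{-\frac12}|\zeta'(\alpha s)|^{-1}$. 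Combined with Lemma \ref{lem:tildeq} and the elementary bound $|\zeta'(\alpha s)|^{-1}\lesssim\min(1,|\alpha s|)$ (derived from $\zeta'(y)=\sqrt{1+y^{-2}}$ and the same $|1+x/\mu|^2\geq\tfrac12(1+x^2/|\mu|^2)$ inequality used in the proof of Lemma \ref{lem:tildeq}), one arrives at the pointwise estimate
\[
|K(r,s,\omega)|\lesssim|\mu|^{-\frac12}\min(1,|\alpha s|)\bigl[|\alpha|^2\langle\alpha s\rangle^{-2}+1\bigr].
\]
A case split on the regimes $|\alpha s|\lessgtr 1$ and $|\alpha|\lessgtr 1$, exploiting the crucial identity $|\mu|^{-1/2}|\alpha|=\nu^{-1}$, then yields $\int_0^1\sup_{r\in[s,1]}|K(r,s,\omega)|\,ds\lesssim\omega^{-1/2}+\nu^{-1}$, and the Volterra theorem recalled in Remark \ref{rem:Volterra} delivers $h_0$ with $|h_0(r,\omega)-1|\lesssim\omega^{-1/2}+\nu^{-1}$, as required.

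For $\hat v_1=\psi_- h_1$ I would pose the same variation-of-parameters problem with initial condition $h_1(1)=1$, obtaining the backward Volterra equation $h_1(r,\omega)=1+\int_r^1 \tilde K(r,s,\omega) h_1(s,\omega)\,ds$ whose kernel $\tilde K(r,s,\omega)=[\psi_-\psi_+(s)-\tfrac{\psi_+(r)}{\psi_-(r)}\psi_-(s)^2]F(s)$ satisfies exactly the same bound, again thanks to Lemma \ref{lem:zeta}, now in the form $\Re\zeta(\alpha r)\leq\Re\zeta(\alpha s)$ for $r\leq s$. Linear independence of $\{\hat v_0,\hat v_1\}$ is read off by evaluating $W(\hat v_0,\hat v_1)$ at $r=1$ and comparing with $W(\psi_+,\psi_-)=-1$. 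The main obstacle is the uniform kernel estimate: both $|\zeta'(\alpha s)|^{-1}$ and the Bessel-type piece $|\alpha|^2\langle\alpha s\rangle^{-2}$ of $F$ change behavior across $|\alpha s|\sim 1$, and the parameter $\alpha=\nu^{-1}\mu^{1/2}$ can itself be small or large; one has to verify that in each regime the integrated contribution is absorbed either by the prefactor $|\mu|^{-1/2}\sim\omega^{-1/2}$ or by the cancellation $|\mu|^{-1/2}|\alpha|=\nu^{-1}$, with no logarithmic losses at the transition.
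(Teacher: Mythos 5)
Your construction of $\hat v_0$ is exactly the paper's: same normalized Liouville--Green system $\psi_\pm$ with $W(\psi_-,\psi_+)=1$ via $\nu\alpha=\mu^{1/2}$, same forward Volterra equation anchored at $r=0$ with the same kernel, the exponential controlled by the monotonicity of $r\mapsto\Re\zeta(\alpha r)$ (Lemma \ref{lem:zeta}), the potential by Lemma \ref{lem:tildeq}, and the key cancellation $|\mu|^{-1/2}|\alpha|\simeq\nu^{-1}$ giving $\int_0^1\sup_r|K|\,ds\lesssim\omega^{-1/2}+\nu^{-1}$; your sharper bound $|\zeta'(\alpha s)|^{-1}\lesssim\min(1,|\alpha s|)$ is correct but not needed (the paper only uses $\lesssim 1$), and no logarithmic loss occurs, as you verify. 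The one genuine difference is $\hat v_1$: you run a second, backward Volterra iteration anchored at $r=1$, whereas the paper obtains $\hat v_1$ from $\hat v_0$ by the reduction (d'Alembert) ansatz, as in Proposition \ref{prop:fsrbig}. Your kernel bound for the backward equation is right (monotonicity now used in the form $\Re\zeta(\alpha r)\leq\Re\zeta(\alpha s)$ for $r\leq s$), and your route has the advantage of delivering the stated asymptotic form of $\hat v_1$ directly, without the integration-by-parts step hidden in the reduction ansatz. What it gives up is automatic linear independence: the reduction ansatz produces a Wronskian that is exactly a nonzero constant, while your check $W(\hat v_0,\hat v_1)=-1+O_\C(\omega^{-\frac12}+\nu^{-1})$ only guarantees nonvanishing when \emph{both} $\omega$ and $\nu$ are large, since the implied constants are absolute but not small; for $\nu$ of order one (say $\nu\in[1,\nu_0]$) the error term need not be below $1$, so strictly speaking the ``fundamental system for all $\nu\geq 1$'' claim is not fully closed by your argument. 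This caveat is harmless for the paper's application (the lemma is only invoked for large angular momenta, where $\nu\geq\nu_0$ is large, and the small-$\nu$ regime is covered by Lemmas \ref{lem:fsrsm} and \ref{lem:fsrsm2}), but you should either restrict your independence claim to $\nu$ large or construct the second solution by reduction from $\hat v_0$ as the paper does.
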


\begin{proof}
We set 
\[ \psi_\pm(r,\omega):=\tfrac{1}{\sqrt 2}\mu^{-\frac14}\zeta'(\alpha r)^{-\frac12}e^{\pm \nu\zeta(\alpha r)} \]
and note that
\[ \partial_r \psi_\pm(r,\omega)=\tfrac{1}{\sqrt 2}\mu^{-\frac14}
\left [\partial_r [\zeta'(\alpha r)^{-\frac12}]\pm \nu\alpha 
\zeta'(\alpha r)^\frac12 \right ]e^{\pm \nu\zeta(\alpha r)} \]
which yields $W(\psi_-(\cdot,\omega),\psi_+(\cdot,\omega))=1$ since $\nu\alpha=\mu^\frac12$.
Consequently, our goal is to solve the integral equation
\begin{align*} 
\hat v_0(r,\omega)&=\psi_+(r,\omega)\\
&\quad +\psi_+(r,\omega)\int_0^r \psi_-(s,\omega)
[\alpha^2 \tilde q(\alpha s)+O_\C(\langle s\rangle^2)]\hat v_0(s,\omega)ds  \\
&\quad -\psi_-(r,\omega)\int_0^r \psi_+(s,\omega)
[\alpha^2 \tilde q(\alpha s)+O_\C(\langle s\rangle^2)]\hat v_0(s,\omega)ds. 
\end{align*}
The function $\psi_+(\cdot,\omega)$ does not have zeros on $(0,\infty)$ and hence, we may
set $h_0(r,\omega):=\frac{\hat v_0(r,\omega)}{\psi_+(r,\omega)}$ which leads to the Volterra
equation
\[ h_0(r,\omega)=1+\int_0^r K(r,s,\omega)h_0(s,\omega)ds \]
for $h_0$ with the kernel
\begin{align*} 
K(r,s,\omega)&=\left [\psi_-(s,\omega)\psi_+(s,\omega)
-\frac{\psi_-(r,\omega)}{\psi_+(r,\omega)}\psi_+(s,\omega)^2 \right ] \\
&\quad \times [\alpha^2 \tilde q(\alpha s)+O_\C(\langle s\rangle^2)]. 
\end{align*}
Now we use Lemma \ref{lem:zeta} and 
$|\zeta(\alpha r)|^{-\frac12}\lesssim 1$
to obtain
\begin{align*}
|K(r,s,\omega)|&\lesssim \omega^{-\frac12}
\left [1+e^{-2\nu[\Re \zeta(\alpha r)-\Re \zeta(\alpha s)]}\right ][|\alpha^2 \tilde q(\alpha s)|
+\langle s\rangle^{-2}] \\
&\lesssim \omega^{-\frac12}|\alpha|^2 \langle \alpha s\rangle^{-2}+\omega^{-\frac12}
\langle s\rangle^{-2}
\end{align*}
for all $0<s\leq r$
where the last estimate follows from Lemma \ref{lem:tildeq}.
Consequently, we infer
\begin{align*}
\int_0^1 \sup_{r\in (s,1)}|K(r,s,\omega)|ds&\lesssim \omega^{-\frac12}
|\alpha|^2 \int_0^1 \langle \alpha s\rangle^{-2}ds+\omega^{-\frac12}\int_0^1 \langle s\rangle^{-2}ds \\
&\lesssim \omega^{-\frac12}|\alpha| \int_0^{|\alpha|}\langle s\rangle^{-2}ds
+\omega^{-\frac12} \\
&\lesssim \nu^{-1}+\omega^{-\frac12}
\end{align*}
since $\omega^{-\frac12}|\alpha|\simeq \nu^{-1}$ and a Volterra iteration yields the existence of $h_0$ with 
$|h_0(r,\omega)|\lesssim 1$ for all $r\in [0,1]$, $\omega \gg 1$, and $\nu\geq 1$.
By re-inserting this estimate into the Volterra equation for $h_0$, we find the desired bound
$|h_0(r,\omega)-1|\lesssim \nu^{-1}+\omega^{-\frac12}$.
The solution $\hat v_1$ is constructed by using the reduction ansatz.
\end{proof}

\subsection{A global fundamental system for large angular momenta}
Now we glue together the fundamental systems from Proposition \ref{prop:fsrbig}
and Lemma \ref{lem:fsrsmnlg} in order to obtain a global fundamental system
for large $\nu$.
This time there is no need for an intermediate regime since the corresponding Wronskians
can be evaluated at $r=1$.

\begin{lemma}
\label{lem:fsnlg}
Let $\mu=b+\I\omega$ where $b\geq 0$ is fixed. Then we have the representations
\begin{align*}
v_-(r,\omega)&=O_\C(\omega^{-\frac12}+\nu^{-1})\hat v_0(r,\omega) 
+[1+O_\C(\omega^{-\frac12}+\nu^{-1})]\hat v_1(r,\omega) \\
\hat v_0(r,\omega)&= O_\C(\omega^{-\frac12}+\nu^{-1})v_-(r,\omega) 
+[1+O_\C(\omega^{-\frac12}+\nu^{-1})]v_+(r,\omega) 
\end{align*}
for all $r>0$, $\omega \gg 1$, and $\nu\geq 1$.
\end{lemma}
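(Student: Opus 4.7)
The plan is as follows. Since Eq.~\eqref{eq:rbig} and Eq.~\eqref{eq:rsmnlg} are merely two ways of rewriting the same homogeneous equation \eqref{eq:spechom2} (moving different ``large'' terms to the right–hand side and absorbing the $\frac{1}{4r^2}$ term into $\alpha^2 q(\alpha r)$, using $\alpha^2 q(\alpha r)-\alpha^2\tilde q(\alpha r)=\frac{1}{4r^2}$), both pairs $\{v_-,v_+\}$ and $\{\hat v_0,\hat v_1\}$ are fundamental systems of one and the same second-order ODE with no first-order term. Consequently, there exist connection coefficients $A_0,A_1,B_-,B_+\in\C$ with
\[ v_-(r,\omega)=A_0\hat v_0(r,\omega)+A_1\hat v_1(r,\omega),\quad \hat v_0(r,\omega)=B_-v_-(r,\omega)+B_+v_+(r,\omega), \]
and these are given by the usual Wronskian quotients. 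Moreover $W(v_-,v_+)$ and $W(\hat v_0,\hat v_1)$ are constants (Liouville normal form), whose values have already been identified, up to the $O_\C$-errors, as $1$ and $-1$, respectively, in Proposition \ref{prop:fsrbig} and Lemma \ref{lem:fsrsmnlg}.

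The next step is to compute the four remaining Wronskians $W(v_-,\hat v_1)$, $W(v_-,\hat v_0)$, $W(\hat v_0,v_+)$, $W(\hat v_0,v_-)$ by evaluation at the convenient point $r=1$. Here we use our normalizations $\xi(\mu^{-1/2})=0$ (from \eqref{eq:xi}) and $\zeta(\alpha)=0$ (by choice of $\gamma$), which kill all the exponential factors. With the short-hand $\xi':=\xi'(\mu^{-1/2})$, $\zeta':=\zeta'(\alpha)$, the explicit formulas in Proposition \ref{prop:fsrbig} and Lemma \ref{lem:fsrsmnlg}, together with $\nu\alpha=\mu^{1/2}$, give
\[ v_\pm(1,\omega)=\tfrac{1}{\sqrt 2}\mu^{-\frac14}(\xi')^{-\frac12}[1+O_\C(\omega^{-\frac12})],\quad v_\pm'(1,\omega)=\pm\tfrac{1}{\sqrt 2}\mu^{\frac14}(\xi')^{\frac12}[1+O_\C(\omega^{-\frac12})], \]
and analogously for $\hat v_{0,1}(1,\omega)$ and $\hat v_{0,1}'(1,\omega)$ with $\xi'$ replaced by $\zeta'$ and errors $O_\C(\omega^{-\frac12}+\nu^{-1})$.

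Carrying out the algebra, the ``diagonal'' pairings produce Wronskians of the form $\pm\tfrac12\bigl[(\xi')^{-1/2}(\zeta')^{1/2}+(\xi')^{1/2}(\zeta')^{-1/2}\bigr]+O_\C(\omega^{-\frac12}+\nu^{-1})=\pm 1+O_\C(\omega^{-\frac12}+\nu^{-1})$, while the ``off-diagonal'' pairings produce $\pm\tfrac12\bigl[(\xi')^{-1/2}(\zeta')^{1/2}-(\xi')^{1/2}(\zeta')^{-1/2}\bigr]+O_\C(\omega^{-\frac12}+\nu^{-1})$. Dividing by the constant Wronskians $W(v_-,v_+)$ and $W(\hat v_0,\hat v_1)$ then yields the claimed shapes of the connection coefficients.

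The only nontrivial point is to verify that the off-diagonal leading expression is small. From the definitions, $(\xi')^2=1+\mu^{-1}+\nu^2\mu^{-1}$ and $(\zeta')^2=1+\nu^2\mu^{-1}$, so $(\xi')^2-(\zeta')^2=\mu^{-1}$, whence
\[ \frac{\zeta'}{\xi'}-1=\frac{-\mu^{-1}}{\zeta'(\xi'+\zeta')}=O(\omega^{-1}+\nu^{-2}), \]
using $|\zeta'|\simeq 1+\nu|\mu|^{-1/2}$ and the bound $|1+x/\mu|^2\gtrsim 1+x^2/|\mu|^2$ from Lemma \ref{lem:estQ}. Thus $(\xi')^{-1/2}(\zeta')^{1/2}-(\xi')^{1/2}(\zeta')^{-1/2}=O(\omega^{-1}+\nu^{-2})$, which is absorbed by the $O_\C(\omega^{-\frac12}+\nu^{-1})$ error that already arises from the correction factors in $v_\pm$ and $\hat v_{0,1}$. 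I expect the main technical nuisance to be exactly this bookkeeping: tracking how the ``diagonal'' Wronskians produce the leading $1$'s and how all smaller terms combine into the single error size $O_\C(\omega^{-\frac12}+\nu^{-1})$, uniformly in $\nu\geq 1$ and $\omega\gg 1$.
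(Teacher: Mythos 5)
Your proposal is correct and follows essentially the same route as the paper: express the connection coefficients as Wronskian quotients, evaluate at $r=1$ where $\xi(\mu^{-\frac12})=0=\zeta(\alpha)$ kill the exponentials, and use the closeness of $\xi'(\mu^{-\frac12})$ and $\zeta'(\alpha)$ (the paper writes the ratio as $1+\tfrac{1}{\mu+\nu^2}$, equivalent to your $(\xi')^2-(\zeta')^2=\mu^{-1}$) to see that the off-diagonal Wronskians are small and the diagonal ones are $\pm1$ up to $O_\C(\omega^{-\frac12}+\nu^{-1})$. (Minor slip: $\frac{\zeta'}{\xi'}-1=\frac{-\mu^{-1}}{\xi'(\xi'+\zeta')}$ rather than $\frac{-\mu^{-1}}{\zeta'(\xi'+\zeta')}$, which is immaterial since $|\xi'|\simeq|\zeta'|$.)
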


\begin{proof}
We have
\[ v_-(r,\omega)=\frac{W(v_-(\cdot,\omega),\hat v_1(\cdot,\omega))}{W(\hat v_0(\cdot,\omega),
\hat v_1(\cdot,\omega))}\hat v_0(r,\omega)
+\frac{W(v_-(\cdot,\omega),\hat v_0(\cdot,\omega))}{W(\hat v_1(\cdot,\omega),\hat v_0(\cdot,\omega))}
\hat v_1(r,\omega)
\]
and from Lemma \ref{lem:fsrsmnlg} we obtain
\[ W(\hat v_0(\cdot,\omega),\hat v_1(\cdot,\omega))=-1+O_\C(\omega^{-\frac12}+\nu^{-1}). \]
Furthermore, since $\zeta(\alpha)=0$ with $\alpha=\nu^{-1}\mu^\frac12$, we obtain from
Lemma \ref{lem:fsrsmnlg} the expressions
\begin{align*}
\hat v_0(1,\omega)&=\tfrac{1}{\sqrt 2}\mu^{-\frac14}\zeta'(\alpha)^{-\frac12}
[1+O_\C(\omega^{-\frac12}+\nu^{-1})] \\
\hat v_0'(1,\omega)&=\tfrac{1}{\sqrt 2}\mu^{\frac14}\zeta'(\alpha)^\frac12
[1+O_\C(\omega^{-\frac12}+\nu^{-1})] \\
\hat v_1(1,\omega)&=\tfrac{1}{\sqrt 2}\mu^{-\frac14}\zeta'(\alpha)^{-\frac12}
[1+O_\C(\omega^{-\frac12}+\nu^{-1})] \\
\hat v_1'(1,\omega)&=-\tfrac{1}{\sqrt 2}\mu^{\frac14}\zeta'(\alpha)^\frac12
[1+O_\C(\omega^{-\frac12}+\nu^{-1})]
\end{align*}
and from Proposition \ref{prop:fsrbig} we infer
\begin{align*}
v_\pm(1,\omega)&=\tfrac{1}{\sqrt 2}\mu^{-\frac14}\xi'(\mu^{-\frac12})^{-\frac12}
[1+O_\C(\omega^{-\frac12}\nu^0)] \\
v_\pm'(1,\omega)&=\pm \tfrac{1}{\sqrt 2}\mu^\frac14 \xi'(\mu^{-\frac12})^\frac12
[1+O_\C(\omega^{-\frac12}\nu^0)].
\end{align*}
Now observe that
\begin{align*} \frac{\xi'(\mu^{-\frac12})^2}{\zeta'(\alpha)^2}&=
\frac{1+\frac{1}{\mu}+\frac{\nu^2}{\mu}}
{1+\frac{\nu^2}{\mu}}=1+\frac{1}{\mu+\nu^2}  \\
&=1+O_\C(\omega^{-1}\nu^{-2})
\end{align*}
and we infer
\begin{align*}
W(v_-(\cdot,\omega),\hat v_1(\cdot,\omega))&=O_\C(\omega^{-\frac12}+\nu^{-1}) \\
W(v_-(\cdot,\omega),\hat v_0(\cdot,\omega))&=1+O_\C(\omega^{-\frac12}+\nu^{-1})
\end{align*}
which yields the claimed representation for $v_-$. The proof for $\hat v_0$ is analogous.
\end{proof}

\section{The reduced resolvent}
\noindent We are now in a position to construct the reduced resolvent 
$R_{\ell,m}(\lambda)$, i.e., the solution
operator to Eq.~\eqref{eq:specrad}.
\subsection{Small angular momenta} 
We start with the solution operator to Eq.~\eqref{eq:specnorm} in the case $\ell\leq \ell_0$
for some fixed $\ell_0>0$.
In this case the functions $v_-$ and $v_0$ from Proposition \ref{prop:fsrbig}
and Lemma \ref{lem:fsrsm}, respectively, are relevant.
By Lemma \ref{lem:fs} we have
\begin{align*} W(\omega):&=W(v_-(\cdot,\omega),v_0(\cdot,\omega)) \\
&=\alpha_+e^{\mu^{1/2}}[1 + O_\C(\omega^{-\frac12})]W(v_-(\cdot,\omega),v_+(\cdot,\omega)) \\
&=\alpha_+ e^{\mu^{1/2}}[1+O_\C(\omega^{-\frac12})] 
\end{align*}
with $\alpha_+ \in \C\backslash \{0\}$ and $\mu=b+\I\omega$, $b\in \R$ fixed (the last equality follows from the representation in
Proposition \ref{prop:fsrbig}). 
In particular, this implies that $\{v_0,v_-\}$ is a fundamental system for the homogeneous version
of Eq.~\eqref{eq:specnorm} (provided $\omega$ is sufficiently large).
Furthermore, $v_0(r,\omega)$ and $v_-(r,\omega)$ are recessive as $r\to 0+$ 
and $r\to \infty$, respectively. 
Thus, the variation of constants formula yields a solution $v$ of
Eq.~\eqref{eq:specnorm} given by
\begin{align*} 
v(r)&=\frac{v_0(r,\omega)}{W(\omega)}\int_r^\infty v_-(s,\omega)
s^{\frac{d-1}{2}}e^{-s^2/2}f(s)ds \\
&\quad +\frac{v_-(r,\omega)}{W(\omega)}\int_0^r v_0(s,\omega)
s^{\frac{d-1}{2}}e^{-s^2/2}f(s)ds.
\end{align*}
We define an operator $\tilde R_\ell(\lambda)$ by
\[ \tilde R_\ell(\lambda)\tilde f(r)
:=\int_0^\infty \tilde G_\ell(r,s,\omega)\tilde f(s)ds \]
where $\lambda=d+b+\I\omega$ and
\begin{equation}
\label{def:G} \tilde G_\ell(r,s,\omega):=\frac{e^{\frac{1}{2}(r^2-s^2)}}{W(\omega)}\left 
\{\begin{array}{l}
v_0(r,\omega)v_-(s,\omega)\quad r\leq s \\
v_-(r,\omega)v_0(s,\omega)\quad r\geq s \end{array} \right . .
\end{equation}
With this notation it follows that 
\[ v(r)=e^{-\frac{1}{2}r^2}\tilde R_\ell(\lambda)\left (|\cdot|^{\frac{d-1}{2}}f\right )(r) \]
and thus, the corresponding solution $u$ to Eq.~\eqref{eq:specrad} is given by
\[ u(r)=R_{\ell,m}(\lambda)f(r)=r^{-\frac{d-1}{2}}e^{\frac{1}{2}r^2}v(r)
=r^{-\frac{d-1}{2}}
\tilde R_\ell(\lambda)\left (|\cdot|^\frac{d-1}{2}f \right)(r). \]
From this equation it also follows that $R_{\ell,m}$ is in fact independent of $m$.
Our goal is to show that
\[ \|R_{\ell,m}(\lambda)f\|_{L^2_\mathrm{rad}(\R^d)}\leq C \|f\|_{L^2_{\mathrm{rad}}(\R^d)},
\qquad \lambda=d+b+\I\omega \]
for all $\omega\gg 1$ and $\ell\leq \ell_0$.
By the above, this is equivalent to the bound
\begin{equation}
\label{eq:bdres}
\|\tilde R_\ell(\lambda)\tilde f\|_{L^2(\R_+)}\leq C \|\tilde f\|_{L^2(\R_+)}
\end{equation}
and a proof of \eqref{eq:bdres} is the goal of this section.

\subsubsection{Kernel bounds}

The desired $L^2$-boundedness of $\tilde R_\ell(\lambda)$ will be a consequence of the
following estimate.

\begin{lemma}
\label{lem:estG}
Let $\ell_0>0$.
Then the kernel $\tilde G_\ell$ defined in \eqref{def:G} satisfies the bound
\[ |\tilde G_\ell(r,s,\omega)|\lesssim \omega^{-\frac12}\left \{\begin{array}{l} 
\langle \omega^{-\frac12}r\rangle^{-\frac12+\frac{b}{2}}
\langle \omega^{-\frac12}s\rangle^{-\frac12-\frac{b}{2}}\quad r\leq s \\
\langle \omega^{-\frac12}r\rangle^{-\frac12-\frac{b}{2}}
\langle \omega^{-\frac12}s\rangle^{-\frac12+\frac{b}{2}} \quad r\geq s \end{array}
\right. \]
for all $r,s>0$, $\omega \gg 1$, and $\ell\leq \ell_0$.
\end{lemma}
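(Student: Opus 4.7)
The plan is to bound $|\tilde G_\ell(r,s,\omega)|$ by combining all the fundamental-system representations developed so far. By the $r \leftrightarrow s$ symmetry built into the definition of $\tilde G_\ell$, it suffices to consider $r \leq s$, where
\[ |\tilde G_\ell(r,s,\omega)| = \frac{e^{(r^2-s^2)/2}}{|W(\omega)|}\,|v_0(r,\omega)\,v_-(s,\omega)|. \]
From Lemma \ref{lem:fs} we have $W(\omega) = \alpha_+ e^{\mu^{1/2}}[1+O_\C(\omega^{-1/2})]$ with $\alpha_+ \neq 0$, hence $|W(\omega)| \simeq e^{\Re \mu^{1/2}}$ for $\omega \gg 1$. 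The bound $\ell \leq \ell_0$ places us in the small-$\nu$ regime throughout, so I can use Proposition \ref{prop:fsrbig}, Lemma \ref{lem:fsrsm}, Lemma \ref{lem:fsrsm2}, Corollary \ref{cor:rsm}, Lemma \ref{lem:fsrbig2}, and Lemma \ref{lem:fs} without restriction.

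I would then split $(r,s)$ into the three zones $(0,c\omega^{-1/2}]$, $[c\omega^{-1/2},1]$, and $[1,\infty)$. For the model case $1 \leq r \leq s$, Lemma \ref{lem:fs} expresses $v_0(r,\omega) = \alpha_+ e^{\mu^{1/2}}[1+O_\C(\omega^{-1/2})]v_+(r,\omega) + e^{\mu^{1/2}}O_\C(\omega^{-1/2})v_-(r,\omega)$, so after division by $W(\omega)$ the factor $e^{\mu^{1/2}}$ is neutralized. Proposition \ref{prop:fsrbig} combined with Lemma \ref{lem:xi} then yields
\[ |v_\pm(\rho,\omega)| \lesssim \omega^{-1/4}\langle\omega^{-1/2}\rho\rangle^{-1/2\pm b/2}e^{\pm \rho^2/2}e^{\pm \varphi(\rho;\omega,\nu)}, \]
with $\varphi$ nondecreasing. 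Multiplying $|v_+(r)|\cdot|v_-(s)|$ and combining with $e^{(r^2-s^2)/2}$ collapses the exponentials to $e^{r^2-s^2}e^{\varphi(r)-\varphi(s)} \leq 1$ since $r \leq s$, giving exactly $\omega^{-1/2}\langle\omega^{-1/2}r\rangle^{-1/2+b/2}\langle\omega^{-1/2}s\rangle^{-1/2-b/2}$. The companion term involving $v_-(r,\omega)$ is treated identically and is strictly smaller.

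For the remaining zones I would use Lemma \ref{lem:fsrbig2} to represent $v_-(s,\omega)$ when $s \in [c\omega^{-1/2},1]$ in the basis $\{\tilde v_-,\tilde v_+\}$, the leading coefficient $\tilde\alpha_- e^{\mu^{1/2}}$ again cancelling $|W(\omega)|$; the Hankel asymptotics of $\tilde v_\mp(s,\omega) \sim \tfrac{1}{\sqrt 2}\mu^{-1/4}e^{\mp\mu^{1/2}s}$ from Lemma \ref{lem:fsrsm2} then produce the correct size, and the logarithmic contribution to $\Re\mu^{1/2}s$ generates the required $\langle\omega^{-1/2}s\rangle^{-b/2}$ weight via Lemma \ref{lem:xi} at the matching point $s=1$. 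On the innermost zone $r$ or $s \in (0,c\omega^{-1/2}]$, Lemma \ref{lem:fsrsm} (for $v_0$) and Corollary \ref{cor:rsm} together with Lemma \ref{lem:fsrbig2} (for $v_-$) reduce the integrand to bounded Bessel/Hankel values, since $|\mu^{1/2}\rho| \leq c$; the weights $\langle\omega^{-1/2}\rho\rangle^{\pm b/2}$ are trivially $\simeq 1$ there, and the target collapses to $|\tilde G_\ell| \lesssim \omega^{-1/2}$.

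The main technical hurdle is the bookkeeping of exponentials at the transition scales $\omega^{-1/2}$ and $1$: in each of the at most six subcases for $r \leq s$, one must verify that the cumulative factor $e^{\pm\mu^{1/2}\,\cdot}$ appearing through the various connection coefficients (Lemmas \ref{lem:fs}, \ref{lem:fsrbig2}, Corollary \ref{cor:rsm}) is exactly compensated by $|W(\omega)|^{-1}\simeq e^{-\Re\mu^{1/2}}$ together with $e^{(r^2-s^2)/2}$, leaving only the admissible factors $e^{r^2-s^2}$ and $e^{\varphi(r)-\varphi(s)}$, both $\leq 1$ by Lemma \ref{lem:xi}. The structural reason this always succeeds is the normalization $\xi(\mu^{-1/2}) = 0$, which aligns the phase of $v_\pm$ at $r=1$ with that of $e^{\pm\mu^{1/2}}$ in the Hankel asymptotics. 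Beyond this matching, no new analytic input is required.
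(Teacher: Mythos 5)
Your proposal is correct and follows essentially the same strategy as the paper: reduce by symmetry to one ordering of $r,s$, split into the Bessel/Hankel/Weber zones at the scales $c\omega^{-1/2}$ and $1$ (giving the same six subcases), convert $v_0$ and $v_-$ into whichever local basis is adapted to each zone via the connection formulas of Lemma \ref{lem:fs}, Lemma \ref{lem:fsrbig2}, and Corollary \ref{cor:rsm}, and cancel the $e^{\mu^{1/2}}$ factors against $|W(\omega)|\simeq e^{\Re\mu^{1/2}}$ while controlling the remaining exponentials by the monotonicity of $\varphi$ from Lemma \ref{lem:xi}. The only cosmetic difference is that in the intermediate and inner zones the weights $\langle\omega^{-1/2}\rho\rangle^{\pm b/2}$ are simply $\simeq 1$ rather than being ``generated'' by anything, which is exactly how the paper handles those cases.
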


\begin{proof}
By symmetry it suffices to consider $r\geq s$. 
Furthermore, we make frequent use of the estimate
\[ |\xi'(\mu^{-\frac12}r)|^{-1}\lesssim \langle \omega^{-\frac12}r\rangle^{-1} \]
which is a consequence of Eq.\ \eqref{eq:estx}.
We distinguish
different cases.
In the following the constant $c\geq 1$ is assumed to be so large that 
Lemma \ref{lem:fsrsm2} holds.
\begin{enumerate}
\item Bessel-Bessel: $0<s\leq r\leq c\omega^{-\frac12}$.
We use Lemma \ref{lem:fs} to obtain the bound
\[ |v_-(r,\omega)|\lesssim e^{\Re(\mu^{1/2})}\left [|v_0(r,\omega)|+|v_1(r,\omega)| \right ]\]
and Lemma \ref{lem:fsrsm} yields, with $\nu=\frac{d}{2}+\ell-1$,
\begin{align*} 
|v_0(s,\omega)v_-(r,\omega)|&\lesssim e^{\Re(\mu^{1/2})}s^\frac12 |\mu^{\frac12} s|^\nu r^\frac12 \left (
|\mu^{\frac12}r|^\nu+|\mu^{\frac12}r|^{-\nu}\right ) \\
&\lesssim e^{\Re(\mu^{1/2})}\omega^{-\frac12} \left (|\omega^\frac12 r|^{2\nu}+1\right ) \\
&\lesssim e^{\Re(\mu^{1/2})}\omega^{-\frac12}.
\end{align*}
This implies
$|\tilde G_\ell(r,s,\omega)|\lesssim \omega^{-\frac12}$.

\item Bessel-Hankel: $0<s\leq c\omega^{-\frac12}\leq r\leq 1$.
From Lemmas \ref{lem:fsrbig2}, \ref{lem:fsrsm2} and the Hankel asymptotics we infer
\begin{align*} |v_-(r,\omega)|&\lesssim e^{\Re(\mu^{1/2})}\omega^{-\frac14}e^{-\Re(\mu^{1/2})r} \\
&\quad +\omega^{-\frac34}e^{-\Re(\mu^{1/2})}e^{\Re(\mu^{1/2})r} \\
&\lesssim \omega^{-\frac14}e^{\Re(\mu^{1/2})}
\end{align*}
and thus,
\begin{align*}
|v_0(s,\omega)v_-(r,\omega)|\lesssim \omega^{-\frac12}e^{\Re(\mu^{1/2})}
\end{align*}
which yields the desired $|\tilde G_\ell(r,s,\omega)|\lesssim \omega^{-\frac12}$.

\item Bessel-Weber: $0<s\leq c\omega^{-\frac12}\leq 1\leq r$.
Here we use Proposition \ref{prop:fsrbig} and Lemma \ref{lem:xi} to obtain
\begin{align*} 
|v_-(r,\omega)|&\lesssim \omega^{-\frac14}\langle \omega^{-\frac12}r\rangle^{-\frac12}
e^{-\Re[\mu\xi(\mu^{-1/2}r)]} \\
&\lesssim \omega^{-\frac14}\langle \omega^{-\frac12}r\rangle^{-\frac12-\frac{b}{2}}
e^{-\frac{1}{2}r^2}e^{-\varphi(r;\omega,\nu)} \\
&\lesssim \omega^{-\frac14}\langle \omega^{-\frac12}r\rangle^{-\frac12-\frac{b}{2}}
e^{-\frac{1}{2}r^2}
\end{align*}
since $\varphi(\cdot;\omega,\nu)$ is monotonically increasing and $|\varphi(1;\omega,\nu)|\lesssim 1$.
Consequently, with $|v_0(s,\omega)|\lesssim \omega^{-\frac14}$ we infer
\[ |v_0(s,\omega)v_-(r,\omega)|\lesssim \omega^{-\frac12}
\langle \omega^{-\frac12}r\rangle^{-\frac12-\frac{b}{2}}
e^{-\frac{1}{2}r^2} \]
which implies $|\tilde G_\ell(r,s,\omega)|\lesssim \omega^{-\frac12}\langle \omega^{-\frac12}
r\rangle^{-\frac12-\frac{b}{2}}$.

\item Hankel-Hankel: $c\omega^{-\frac12}\leq s\leq r\leq 1$.
From Corollary \ref{cor:rsm} and the Hankel asymptotics we have
\begin{align*} 
|v_0(s,\omega)|&\lesssim |\tilde v_-(s,\omega)|+|\tilde v_+(s,\omega)| \\
&\lesssim \omega^{-\frac14}e^{\Re(\mu^{1/2})s}
\end{align*}
and from the Bessel-Hankel-case we recall the estimate
\[ |v_-(r,\omega)|\lesssim \omega^{-\frac14}e^{\Re(\mu^{1/2})}e^{-\Re(\mu^{1/2})r}
+\omega^{-\frac34}. \]
This yields
\begin{align*} 
|v_0(s,\omega)v_-(r,\omega)|&\lesssim \omega^{-\frac12}e^{\Re(\mu^{1/2})}e^{-\Re(\mu^{1/2})(r-s)} \\
&\quad +\omega^{-1}e^{\Re(\mu^{1/2})s} \\
&\lesssim \omega^{-\frac12}e^{\Re(\mu^{1/2})}
\end{align*}
since $\Re(\mu^{1/2})\geq 0$ and we obtain $|\tilde G_\ell(r,s,\omega)|\lesssim \omega^{-\frac12}$.

\item Hankel-Weber: $c\omega^{-\frac12}\leq s\leq 1\leq r$.
From Hankel-Hankel and Bessel-Weber we infer
\[ |v_0(s,\omega)v_-(r,\omega)|\lesssim \omega^{-\frac12}e^{\Re(\mu^{1/2})}
\langle \omega^{-\frac12}r\rangle^{-\frac12-\frac{b}{2}}
e^{-\frac{1}{2}r^2}
\]
which yields $|\tilde G_\ell(r,s,\omega)|\lesssim \omega^{-\frac12}
\langle \omega^{-\frac12}r\rangle^{-\frac12-\frac{b}{2}}$.

\item Weber-Weber: $1\leq s\leq r$. From Lemma \ref{lem:fs} and Proposition
\ref{prop:fsrbig} we infer
\begin{align*} 
|v_0(s,\omega)|&\lesssim \omega^{-\frac34}e^{\Re(\mu^{1/2})}
\langle \omega^{-\frac12}s\rangle^{-\frac12-\frac{b}{2}}
e^{-\frac{1}{2}s^2}e^{-\varphi(s;\omega,\nu)} \\
&\quad +\omega^{-\frac14}e^{\Re(\mu^{1/2})}
\langle \omega^{-\frac12}s\rangle^{-\frac12+\frac{b}{2}}
e^{\frac{1}{2}s^2}e^{\varphi(s;\omega,\nu)}
\end{align*}
which yields
\begin{align*} 
|v_0(s,\omega)v_-(r,\omega)|&\lesssim \omega^{-\frac12}e^{\Re(\mu^{1/2})}
\langle \omega^{-\frac12}r\rangle^{-\frac12-\frac{b}{2}}\langle \omega^{-\frac12}
s\rangle^{-\frac12+\frac{b}{2}} \\
&\quad \times \big [
\omega^{-\frac12}e^{-\frac{1}{2}r^2-\frac{1}{2}s^2}e^{-\varphi(s;\omega,\nu)}e^{-\varphi(r;\omega,\nu)} \\
&\qquad +e^{-\frac{1}{2}(r^2-s^2)}e^{-[\varphi(r;\omega,\nu)-\varphi(s;\omega,\nu)]}\big ] \\
&\lesssim \omega^{-\frac12}e^{\Re(\mu^{1/2})}
\langle \omega^{-\frac12}r\rangle^{-\frac12-\frac{b}{2}}\langle \omega^{-\frac12}
s\rangle^{-\frac12+\frac{b}{2}} \\
&\quad \times e^{-\frac{1}{2}(r^2-s^2)}
\end{align*}
since $\varphi(\cdot;\omega,\nu)$ is monotonically increasing and $|\varphi(1;\omega,\nu)|\lesssim 1$.
Consequently, we infer 
\[ |\tilde G_\ell(r,s,\omega)|\lesssim \omega^{-\frac12}
\langle \omega^{-\frac12}r\rangle^{-\frac12-\frac{b}{2}}\langle \omega^{-\frac12}
s\rangle^{-\frac12+\frac{b}{2}}. \]
\end{enumerate}
\end{proof}

\subsection{Large angular momenta}
For large angular momenta $\ell\geq\ell_0$ we use the functions
$v_-$ and $\hat v_0$ from Proposition \ref{prop:fsrbig}
and Lemma \ref{lem:fsrsmnlg}, respectively, to construct the operator $\tilde R_\ell(\lambda)$.
For the Wronskian of $v_-$ and $\hat v_0$ we obtain
\begin{equation}
\label{eq:What}
\hat W(\omega):=W(v_-(\cdot,\omega),\hat v_0(\cdot,\omega)) 
=1+O_\C(\omega^{-\frac12}+\nu^{-1})
\end{equation}
where we have used the global representation from Lemma \ref{lem:fsnlg}
and $W(v_-(\cdot,\omega),v_+(\cdot,\omega))=1$ which follows from 
Proposition \ref{prop:fsrbig}.
Eq.~\eqref{eq:What} shows that $\{\hat v_0,v_-\}$ is a fundamental system
for Eq.~\eqref{eq:spechom} provided $\omega$ \emph{and} $\ell$ are sufficiently large
(recall that $\nu=\frac{d}{2}+\ell-1$).
Furthermore, $\hat v_0(r,\omega)$ is recessive as $r\to 0+$ and $v_-(r,\omega)$ is recessive
as $r\to\infty$.
Thus, we set 
\begin{equation}
\label{def:Ghat} \hat G_\ell(r,s,\omega):=\frac{e^{\frac{1}{2}(r^2-s^2)}}{\hat W(\omega)}
\left \{ \begin{array}{ll}
\hat v_0(r,\omega)v_-(s,\omega) & r\leq s \\
v_-(r,\omega)\hat v_0(s,\omega) & r\geq s \end{array} \right .
\end{equation}
and obtain
\[ \tilde R_\ell(\lambda)f(r)=\int_0^\infty \hat G_\ell(r,s,\omega)f(s)ds \]
in the case $\ell \gg 1$.

\subsubsection{Kernel bounds}
Now we show that $\hat G_\ell$ satisfies the same bounds as $\tilde G_\ell$.

\begin{lemma}
\label{lem:estGhat}
The kernel $\hat G_\ell$ defined in \eqref{def:Ghat} satisfies the bound
\[ |\hat G_\ell(r,s,\omega)|\lesssim \omega^{-\frac12}\left \{\begin{array}{l} 
\langle \omega^{-\frac12}r\rangle^{-\frac12+\frac{b}{2}}
\langle \omega^{-\frac12}s\rangle^{-\frac12-\frac{b}{2}}\quad r\leq s \\
\langle \omega^{-\frac12}r\rangle^{-\frac12-\frac{b}{2}}
\langle \omega^{-\frac12}s\rangle^{-\frac12+\frac{b}{2}} \quad r\geq s \end{array}
\right. \]
for all $r,s>0$, $\omega \gg 1$, and $\ell\gg 1$.
\end{lemma}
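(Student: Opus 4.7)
The plan is to imitate the case analysis in the proof of Lemma \ref{lem:estG}, but in a simplified form: for $\ell \gg 1$ the Liouville-Green Bessel representation of Lemma \ref{lem:fsrsmnlg} covers the whole interval $(0,1]$, so there is no intermediate Hankel regime, and by the symmetry $\hat G_\ell(r,s,\omega)=\hat G_\ell(s,r,\omega)$ (after swapping the two branches of the piecewise definition) I may restrict throughout to $r \geq s$. The three regimes to be treated are (a) $0 < s \leq r \leq 1$, (b) $0 < s \leq 1 \leq r$, and (c) $1 \leq s \leq r$. The principal ingredients are the global connection formulas of Lemma \ref{lem:fsnlg}; the monotonicity of $\Re\zeta(\alpha\cdot)$ from Lemma \ref{lem:zeta} together with the normalization $\zeta(\alpha)=0$; the monotonicity of $\varphi$ from Lemma \ref{lem:xi} together with the bound $|\varphi(1;\omega,\nu)| \lesssim 1$ (since $\xi(\mu^{-1/2})=0$); and the uniform estimates $|\zeta'(\alpha r)|^{-1/2} \lesssim 1$, which follows from $|\zeta'(\alpha r)|^{-2}=|\mu|r^2/|\mu r^2+\nu^2|\leq |\mu|/\omega \lesssim 1$, and $|\xi'(\mu^{-1/2}r)|^{-1} \lesssim \langle\omega^{-1/2}r\rangle^{-1}$.

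In regime (a) I expand $v_-(r,\omega)=[1+O_\C(\omega^{-1/2}+\nu^{-1})]\hat v_1(r,\omega)+O_\C(\omega^{-1/2}+\nu^{-1})\hat v_0(r,\omega)$ via Lemma \ref{lem:fsnlg}. The pointwise bounds $|\hat v_0(x,\omega)|,|\hat v_1(x,\omega)|\lesssim \omega^{-1/4}e^{\pm\nu\Re\zeta(\alpha x)}$ from Lemma \ref{lem:fsrsmnlg} give $|\hat v_1(r)\hat v_0(s)|\lesssim \omega^{-1/2}e^{\nu[\Re\zeta(\alpha s)-\Re\zeta(\alpha r)]}\lesssim \omega^{-1/2}$ because $s\leq r$ and $\Re\zeta(\alpha\cdot)$ is monotonically increasing, while $|\hat v_0(r)\hat v_0(s)|\lesssim \omega^{-1/2}e^{\nu[\Re\zeta(\alpha r)+\Re\zeta(\alpha s)]}\lesssim \omega^{-1/2}$ since $\Re\zeta(\alpha r),\Re\zeta(\alpha s)\leq \Re\zeta(\alpha)=0$ on $(0,1]$. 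With $e^{(r^2-s^2)/2}\leq 1$ and $\langle\omega^{-1/2}r\rangle,\langle\omega^{-1/2}s\rangle\simeq 1$, the desired bound follows.

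In regime (b) I bound $|\hat v_0(s,\omega)|\lesssim \omega^{-1/4}$ directly and combine with the Weber estimate $|v_-(r,\omega)|\lesssim \omega^{-1/4}\langle\omega^{-1/2}r\rangle^{-1/2-b/2}e^{-r^2/2}$ obtained from Proposition \ref{prop:fsrbig} and Lemma \ref{lem:xi}, after absorbing $e^{-\varphi(r)}\lesssim 1$. Multiplication by $e^{(r^2-s^2)/2}$ leaves an extra $e^{-s^2/2}\leq 1$, and $\langle\omega^{-1/2}s\rangle\simeq 1$ gives the claim. In regime (c), I use the second identity of Lemma \ref{lem:fsnlg} to write $\hat v_0(s,\omega)=O_\C(\omega^{-1/2}+\nu^{-1})v_-(s,\omega)+[1+O_\C(\omega^{-1/2}+\nu^{-1})]v_+(s,\omega)$, and then apply Proposition \ref{prop:fsrbig} and Lemma \ref{lem:xi} to obtain $|v_\pm(x,\omega)|\lesssim \omega^{-1/4}\langle\omega^{-1/2}x\rangle^{-1/2\pm b/2}e^{\pm x^2/2}e^{\pm\varphi(x)}$. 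The dominant $v_-(r)v_+(s)$ contribution yields the required weights $\langle\omega^{-1/2}r\rangle^{-1/2-b/2}\langle\omega^{-1/2}s\rangle^{-1/2+b/2}$ after the Gaussians cancel against $e^{(r^2-s^2)/2}$ and the nonnegative difference $\varphi(r)-\varphi(s)$. The subdominant $v_-(r)v_-(s)$ contribution carries the small prefactor $O_\C(\omega^{-1/2}+\nu^{-1})$ together with an extra Gaussian $e^{-s^2}$ and $e^{-\varphi(r)-\varphi(s)}\lesssim 1$; since $b\geq 0$ (as required by Lemma \ref{lem:zeta} and hence implicit in this regime) one has $\langle\omega^{-1/2}s\rangle^{-b/2}\leq \langle\omega^{-1/2}s\rangle^{+b/2}$, and this term absorbs into the claimed bound.

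The main obstacle is the uniformity in $\nu$ in regime (a), where the individual functions $\hat v_0,\hat v_1$ carry uncontrolled factors $e^{\pm\nu\Re\zeta(\alpha x)}$. The critical structural observation is that only products of the form $e^{\nu[\Re\zeta(\alpha r)\pm\Re\zeta(\alpha s)]}$ appear in the kernel, and the combination of $s\leq r\leq 1$, the monotonicity of $\Re\zeta(\alpha\cdot)$, and the normalization $\zeta(\alpha)=0$ forces both exponents to be nonpositive; no explicit estimate on the magnitude of $\Re\zeta$ is ever required, which is precisely what delivers the bound uniformly in both large parameters $\nu$ and $\omega$.
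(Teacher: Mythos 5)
Your proposal is correct and follows essentially the same route as the paper's proof: the same three-regime decomposition (Bessel--Bessel, Bessel--Weber, Weber--Weber), the same use of Lemmas \ref{lem:fsnlg} and \ref{lem:fsrsmnlg} together with the monotonicity of $\Re\zeta(\alpha\cdot)$ and $\varphi$ and the normalizations $\zeta(\alpha)=0$, $\xi(\mu^{-\frac12})=0$. The only blemish is the assertion $e^{(r^2-s^2)/2}\leq 1$ in regime (a), which for $r\geq s$ should read $e^{(r^2-s^2)/2}\leq e^{1/2}\lesssim 1$; this does not affect the argument.
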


\begin{proof}
By symmetry it suffices to consider $r\geq s$ and 
we distinguish three cases. As before we set $\alpha=\nu^{-1}\mu^\frac12$
and recall that $\nu=\frac{d}{2}+\ell-1$.
We will use the estimates $|\zeta'(\alpha r)|^{-1}\lesssim 1$
and $|\xi'(\mu^{-\frac12}r)|^{-1}\lesssim \langle \omega^{-\frac12}r\rangle^{-1}$
which follow from Eq.~\eqref{eq:estx}.

\begin{enumerate}
\item Bessel-Bessel: $0<s\leq r\leq 1$. 
From Lemma \ref{lem:fsrsmnlg} we obtain 
\[ |\hat v_0(s,\omega)|\lesssim \omega^{-\frac14}e^{\nu\Re\zeta(\alpha s)} \]
and Lemma \ref{lem:fsnlg} yields
\begin{align*}
|v_-(r,\omega)|&\lesssim |\hat v_0(r,\omega)|+|\hat v_1(r,\omega)| \\
&\lesssim \omega^{-\frac14}e^{\nu\Re \zeta(\alpha r)}+\omega^{-\frac14}e^{-\nu\Re\zeta(\alpha r)}.
\end{align*}
Thus, we infer
\begin{align*} |\hat v_0(s,\omega)v_-(r,\omega)|&\lesssim \omega^{-\frac12}
e^{\nu\Re\zeta(\alpha s)+\nu\Re\zeta(\alpha r)} \\
&\quad +\omega^{-\frac12}e^{-\nu[\Re \zeta(\alpha r)-\Re\zeta(\alpha s)]} \\
&\lesssim \omega^{-\frac12}e^{2\nu\Re\zeta(\alpha)}+\omega^{-\frac12}\lesssim \omega^{-\frac12}
\end{align*}
since $r\mapsto \Re\zeta(\alpha r)$ is monotonically increasing by Lemma \ref{lem:zeta}
and $\zeta(\alpha)=0$.
This yields $|\hat G_\ell(r,s,\omega)|\lesssim \omega^{-\frac12}$.

\item Bessel-Weber: $0<s\leq 1\leq r$.
From Lemma \ref{lem:fsrsmnlg} and Proposition \ref{prop:fsrbig} we infer
\begin{align*} |\hat v_0(s,\omega)v_-(r,\omega)|&\lesssim \omega^{-\frac12}
\langle \omega^{-\frac12}r\rangle^{-\frac12-\frac{b}{2}}e^{-\frac{1}{2}r^2}
e^{\nu \Re\zeta(\alpha s)}e^{-\varphi(r;\omega,\nu)} \\
&\lesssim \omega^{-\frac12}\langle \omega^{-\frac12}r\rangle^{-\frac12-\frac{b}{2}}
e^{-\frac{1}{2}r^2}
\end{align*}
where we have used the fact that $s\mapsto \Re \zeta(\alpha s)$ and $\varphi(\cdot;\omega,\nu)$
are monotonically increasing (Lemmas \ref{lem:zeta}, \ref{lem:xi}) as well as
$\zeta(\alpha)=0$ and $|\varphi(1;\omega,\nu)|\lesssim 1$.
Consequently, we obtain the desired
\[ |\hat G_\ell(r,s,\omega)|\lesssim \omega^{-\frac12}
\langle \omega^{-\frac12}r\rangle^{-\frac12-\frac{b}{2}}. \]
\item Weber-Weber: $1\leq s\leq r$.
Here we use the representation from Lemma \ref{lem:fsnlg}, Proposition \ref{prop:fsrbig}, 
and Lemma \ref{lem:xi}
to obtain
\begin{align*}
|\hat v_0(s,\omega)|&\lesssim |v_-(s,\omega)|+|v_+(s,\omega)| \\
&\lesssim \omega^{-\frac14}\langle \omega^{-\frac12}s\rangle^{-\frac12-\frac{b}{2}}
e^{-\frac{1}{2}s^2}e^{-\varphi(s;\omega,\nu)} \\
&\quad +\omega^{-\frac14}\langle \omega^{-\frac12}s\rangle^{-\frac12+\frac{b}{2}}
e^{\frac{1}{2}s^2}e^{\varphi(s;\omega,\nu)} \\
&\lesssim \omega^{-\frac14}\langle \omega^{-\frac12}s\rangle^{-\frac12+\frac{b}{2}}
e^{\frac{1}{2}s^2}\left [e^{-\varphi(1;\omega,\nu)}+e^{\varphi(s;\omega,\nu)}\right ] 
\end{align*}
and this implies
\begin{align*}
|\hat v_0(s,\omega)v_-(r,\omega)|&\lesssim \omega^{-\frac12}
\langle \omega^{-\frac12}r\rangle^{-\frac12-\frac{b}{2}}
\langle \omega^{-\frac12}s\rangle^{-\frac12+\frac{b}{2}}
e^{-\frac{1}{2}(r^2-s^2)} \\
&\quad \times \left [e^{-2\varphi(1;\omega,\nu)}
+e^{-[\varphi(r;\omega,\nu)-\varphi(s;\omega,\nu)]}
 \right ] \\
 &\lesssim \omega^{-\frac12}
\langle \omega^{-\frac12}r\rangle^{-\frac12-\frac{b}{2}}
\langle \omega^{-\frac12}s\rangle^{-\frac12+\frac{b}{2}}
e^{-\frac{1}{2}(r^2-s^2)}
\end{align*}
since $\varphi(\cdot;\omega,\nu)$ is monotonically increasing and $|\varphi(1;\omega,\nu)|
\lesssim 1$.
\end{enumerate}
\end{proof}

\subsection{Boundedness of the reduced resolvent}

We can now conclude the desired $L^2$-boundedness of $\tilde R_\ell(\lambda)$ for
all angular momenta. This concludes the proof of Theorem \ref{thm:main2}.

\begin{lemma}
\label{lem:L2Rl}
Let $b>0$. Then we have the bound
\[ \|\tilde R_\ell(d+b+\I\omega)\|_{L^2(\R_+)}\leq C \]
for all $\omega\gg 1$ and all $\ell \in \N_0$.
\end{lemma}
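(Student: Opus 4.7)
The plan is to combine Lemmas \ref{lem:estG} and \ref{lem:estGhat} into a single pointwise bound on the integral kernel of $\tilde R_\ell(d+b+\I\omega)$ valid for all $\ell\in\N_0$, all $\omega\gg 1$, and all $r,s>0$: by choosing the small-$\ell$ threshold $\ell_0$ of Lemma \ref{lem:estG} large enough to cover the complement of the large-$\ell$ regime of Lemma \ref{lem:estGhat}, the two bounds glue together. The resulting majorant is symmetric under the swap $(r,s)\leftrightarrow(s,r)$, so the question of $L^2$-boundedness reduces to Schur's test for positive kernels.

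For Schur I would try a power weight $w(s):=\langle\omega^{-\frac12}s\rangle^{\alpha}$ and optimize $\alpha$. Substituting $t=\omega^{-\frac12}s$ and writing $\rho=\omega^{-\frac12}r$ absorbs the $\omega^{-\frac12}$ prefactor into the Jacobian, producing the $\omega$-independent expression
\[
\langle\rho\rangle^{-\frac12-\frac{b}{2}}\int_0^\rho\langle t\rangle^{-\frac12+\frac{b}{2}+\alpha}\,dt
+\langle\rho\rangle^{-\frac12+\frac{b}{2}}\int_\rho^\infty\langle t\rangle^{-\frac12-\frac{b}{2}+\alpha}\,dt,
\]
which must be bounded by $C\langle\rho\rangle^\alpha$. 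The symmetric choice $\alpha=-\tfrac12$ is natural: the inner integral then grows like $\langle\rho\rangle^{b/2}$ and the outer tail decays like $\langle\rho\rangle^{-b/2}$, so both contributions combine to give exactly $\langle\rho\rangle^{-1/2}=w(r)$. Convergence of the tail integral is precisely the inequality $-\tfrac12-\tfrac{b}{2}+\alpha<-1$, which for $\alpha=-\tfrac12$ is equivalent to the standing assumption $b>0$.

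By the pointwise symmetry of the kernel bound, the conjugate Schur inequality $\int_0^\infty|G_\ell(r,s,\omega)|\,w(r)\,dr\lesssim w(s)$ follows from the same computation with $\rho$ and $t$ interchanged, and Schur's lemma then delivers the uniform operator-norm bound asserted in the statement. The main conceptual work has already been carried out in the kernel estimates of Lemmas \ref{lem:estG} and \ref{lem:estGhat}; the only genuine subtlety here is the selection of a weight exponent lying inside the narrow admissible window $-\tfrac12-\tfrac{b}{2}<\alpha<-\tfrac12+\tfrac{b}{2}$, which exists precisely when $b>0$. This is reassuring: the same positivity of $b$ that places $\lambda=d+b+\I\omega$ strictly to the right of $\sigma(L_0)$ is exactly what produces the integrability needed to close the Schur estimate.
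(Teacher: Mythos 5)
Your proposal is correct and follows essentially the same route as the paper: there the kernel bounds of Lemmas \ref{lem:estG} and \ref{lem:estGhat} are rescaled via $r\mapsto\omega^{1/2}r$, $s\mapsto\omega^{1/2}s$ to produce an $\omega$-independent majorant, and the $L^2$-boundedness of the resulting operator is then quoted from a lemma in \cite{DonKri13} whose content is precisely a Schur-type test for such power-weight kernels. You simply carry out that Schur test explicitly with the weight $\langle\omega^{-\frac12}s\rangle^{-\frac12}$, absorbing the rescaling into the substitution $t=\omega^{-\frac12}s$; the admissibility window $-\tfrac12-\tfrac b2<\alpha<-\tfrac12+\tfrac b2$ and the role of $b>0$ are identified correctly.
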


\begin{proof}
We choose $\ell_0>0$ so large that Lemma \ref{lem:estGhat} applies for all $\ell\geq \ell_0$.
For $\ell\leq \ell_0$ we define
an auxiliary operator $T_\ell(\omega)$ by
\[ T_\ell(\omega)f(r):=\int_0^\infty \omega^\frac12\tilde G_\ell(\omega^\frac12 r,
\omega^\frac12 s, \omega)f(s)ds. \]
From Lemma \ref{lem:estG} we have the bound
\[ |\omega^\frac12 \tilde G_\ell(\omega^\frac12 r,\omega^\frac12 s,\omega)|
\lesssim \left \{\begin{array}{l} 
\langle r\rangle^{-\frac12+\frac{b}{2}}
\langle s\rangle^{-\frac12-\frac{b}{2}}\quad r\leq s \\
\langle r\rangle^{-\frac12-\frac{b}{2}}
\langle s\rangle^{-\frac12+\frac{b}{2}} \quad r\geq s \end{array}
\right. \] 
and this implies $\|T_\ell(\omega)\|_{L^2(\R_+)}\leq C$ for all $\omega\gg 1$
(see e.g.\ \cite{DonKri13}, Lemma 5.5).
For $a>0$ we write $f_a(r):=f(\frac{r}{a})$ and by scaling we obtain
\begin{align*}
\|\tilde R_\ell(\lambda) f\|_{L^2(\R_+)}
&=\omega^\frac14 \|[\tilde R_\ell(\lambda)f]_{\omega^{-1/2}}\|_{L^2(\R_+)} \\
&=\omega^\frac14 \|T_\ell(\omega)f_{\omega^{-1/2}}\|_{L^2(\R_+)} \\
&\leq C\, \omega^\frac14 \|f_{\omega^{-1/2}}\|_{L^2(\R_+)} \\
&=C \|f\|_{L^2(\R_+)}
\end{align*}
where $\lambda=d+b+\I\omega$.
This proves the statement for all $\ell\leq \ell_0$.
In the case $\ell\geq \ell_0$ we replace $\tilde G_\ell$ by $\hat G_\ell$ and
use Lemma \ref{lem:estGhat}.
\end{proof}

\appendix

\section{Applicability of the abstract theory}
\label{sec:apx}

\noindent In this appendix we discuss the applicability of 
the abstract theory 
to deduce
Theorem \ref{thm:main}.
The most general results related to spectral mapping are developed in
\cite{BreNagPol00}.
To be more precise, the paper \cite{BreNagPol00} deals with the following problem.
Suppose we are given an ``unperturbed semigroup'' $T_0(t)$ on a Banach space $X$
with generator 
$A$ and a ``perturbed semigroup'' $T(t)$ with generator $A+B$ where for our purposes
$B$ may be assumed bounded (the theory in \cite{BreNagPol00} is more general).
Under what assumptions on $T_0$, $B$, and/or $T$ does spectral mapping for $T$ hold?
The paper \cite{BreNagPol00} derives various sufficient criteria based on norm continuity
properties of the remainders in the Dyson-Phillips expansion.
Unfortunately, many of the criteria 
involve the perturbed semigroup $T$ itself or an infinite series of convolutions
of the operators $T_0$ and $B$ which
makes them hard to check. However, there is a set of criteria that involve the unperturbed
semigroup $T_0$ and the perturbing operator $B$ only.
Since there exists an explicit representation of the 
unperturbed Ornstein-Uhlenbeck semigroup $S_0$, 
one might hope to deduce Theorem \ref{thm:main} from the abstract theory.
Let us recall the precise statement.

\begin{theorem}[Brendle-Nagel-Poland \cite{BreNagPol00}]
\label{thm:abstract}
Let $T_0$, $A$, $B$, and $T$ be as above. For $k\in \N_0$ define 
$\tilde T_k: [0,\infty)\to \mc B(X)$ recursively by
\[ \tilde T_{k}(t)f:=\int_0^t \tilde T_{k-1}(t-s)BT_0(s)fds,\qquad \tilde T_0(t):=T_0(t). \]
If there exists a $k\in \N$ such that $\tilde T_k$ is norm continuous on $[0,\infty)$ then
\[ \sigma(T(t))\backslash \{0\}\subset e^{t\sigma(A+B)}\cup \{\lambda: |\lambda|\leq r_{\mathrm{crit}}(T_0(t))\}. \]
Furthermore, if $\tilde T_1$ is norm continuous on $[0,\infty)$ then
\[ \sigma(T(t))\backslash \{0\}=e^{t\sigma(A+B)}\cup \sigma_{\mathrm{crit}}(T_0(t))\backslash \{0\}. \]   
\end{theorem}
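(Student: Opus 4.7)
The natural starting point is the Dyson-Phillips expansion
\[ T(t) = \sum_{k=0}^\infty \tilde T_k(t), \]
obtained by iterating the variation-of-constants identity $T(t)f = T_0(t)f + \int_0^t T(t-s) B T_0(s) f \, ds$. I would first verify that this series converges in operator norm uniformly on compact time intervals via the standard bound $\|\tilde T_k(t)\| \leq M \|B\|^k t^k / k!$, with $M$ controlling $T_0$ on the interval. The key observation is that norm continuity propagates up the recursion: if some $\tilde T_k$ is norm continuous on $[0,\infty)$, then the integrand in the definition of $\tilde T_{k+1}$ is norm continuous in its time parameter and uniformly bounded, so $\tilde T_{k+1}$ is norm continuous. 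Inductively, the entire tail $R_k(t) := \sum_{n \geq k} \tilde T_n(t)$ is norm continuous, and we obtain the decomposition
\[ T(t) = \sum_{n=0}^{k-1} \tilde T_n(t) + R_k(t), \]
where $R_k$ is norm continuous while the finite sum carries the rough part of the dynamics.

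The core of the argument is to convert this decomposition into the claimed spectral inclusion. For a norm-continuous $C_0$-semigroup the spectral mapping theorem is classical, but here $T$ itself is in general not norm continuous; norm continuity lives only in the Dyson-Phillips tail. I would therefore employ a Fredholm/essential-spectrum argument: for $\lambda \in \sigma(T(t)) \setminus \{0\}$ with $|\lambda| > r_{\mathrm{crit}}(T_0(t))$, the operator $\lambda - T(t)$ differs from $\lambda - \sum_{n < k} \tilde T_n(t)$ by the norm-continuous (hence essentially small, in the appropriate quotient) term $R_k(t)$. One then passes to the essential Calkin-type quotient to show that $\lambda$ must lie in the spectrum of a structured operator whose resolvent is explicitly controlled by $(\mu - (A+B))^{-1}$ through the Laplace transform of the Dyson-Phillips series, forcing $\lambda \in e^{t \sigma(A+B)}$.

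For the sharper statement when $k=1$, only $\tilde T_0(t) = T_0(t)$ appears as a ``bad'' summand, so the exceptional set is exactly the critical spectrum $\sigma_{\mathrm{crit}}(T_0(t))$ of $T_0(t)$, yielding equality rather than inclusion. The main obstacle I anticipate is quantitatively matching the correct critical radius $r_{\mathrm{crit}}(T_0(t))$ that appears in the statement; this requires a careful identification of the essential spectrum with the image under the perturbation of the critical part of $T_0(t)$, together with a precise treatment of how $B$ interacts with the spectral-radius-reducing effect of norm continuity on the tail. Since the result is cited from \cite{BreNagPol00}, I would ultimately defer to that reference for the detailed execution of this program.
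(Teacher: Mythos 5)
First, note that the paper does not prove this statement at all: Theorem \ref{thm:abstract} is imported verbatim from \cite{BreNagPol00} and appears only in the appendix, where the authors argue that its hypotheses \emph{fail} for the Ornstein--Uhlenbeck problem. There is therefore no in-paper proof to compare against, and your decision to defer to the reference for the detailed execution is exactly what the authors themselves do.

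As a sketch of the external proof, your outline gets the global architecture right (Dyson--Phillips expansion, propagation of norm continuity up the recursion, norm continuity of the tail $R_k$), but the central mechanism is misidentified. The set $\sigma_{\mathrm{crit}}(T_0(t))$ and the radius $r_{\mathrm{crit}}(T_0(t))$ are \emph{not} the essential spectrum and essential spectral radius in a Calkin-type quotient of $\mc B(X)$, and a Fredholm perturbation argument cannot work here: the norm-continuous remainder $R_k(t)$ is in general not compact, and conversely a compact perturbation argument would say nothing about the critical spectrum. The critical spectrum of Nagel--Poland is defined by lifting the semigroup to $\ell^\infty(\N,X)$ (or $\ell^\infty([0,\infty),X)$) and passing to the quotient by the closed subspace on which the lifted semigroup acts strongly continuously; $\sigma_{\mathrm{crit}}(T(t))$ is the spectrum of the induced operator on that quotient. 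The two lemmas that actually drive the proof are (i) the spectral mapping theorem modulo critical spectrum, $\sigma(T(t))\backslash\{0\}= e^{t\sigma(A+B)}\cup\sigma_{\mathrm{crit}}(T(t))\backslash\{0\}$, valid for \emph{every} $C_0$-semigroup, and (ii) the invariance statement that norm continuity of a Dyson--Phillips remainder forces $\sigma_{\mathrm{crit}}(T(t))\subset\sigma_{\mathrm{crit}}(T_0(t))$ (for $k=1$ one gets equality, for general $k$ only the bound $r_{\mathrm{crit}}(T(t))\leq r_{\mathrm{crit}}(T_0(t))$, which explains why the general case yields only an inclusion into a disc). Without introducing the quotient-space calculus, step (ii) --- which is where the hypothesis on $\tilde T_k$ is actually consumed --- has no home in your argument, so as written the sketch has a genuine gap at its core even though the endpoints are correct.
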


\noindent The most accessible criterion is of course the case $k=1$.
In fact, this is also the property that is tested in Example 5.2 in \cite{BreNagPol00}.
In what follows we discuss this criterion for the Ornstein-Uhlenbeck operator $L$.

The free Ornstein-Uhlenbeck semigroup $S_0(t)$ has the explicit representation 
\begin{align} 
\label{eq:OU}
[S_0(t)f](x)&=\frac{1}{[\pi \alpha(t)]^{d/2}}\int_{\R^d} e^{-\frac{|y|^2}{\alpha(t)}}
f(e^{-2t}x-y)dy \nonumber \\
&=:K_t * f(e^{-2t}x)
\end{align}
where $\alpha(t)=1-e^{-4t}$.
Consequently,
$S_0(t)$ consists of a ``heat part'' and a ``dilation part''. 
While the former is very well behaved, the latter is responsible for the difficulties of
the problem at hand. 
Thus, it makes sense to start the discussion by considering the dilation semigroup alone.
To this end, let $[T_0(t)f](x)=f(e^{-2t}x)$ be a dilation semigroup on $L^2(\R^d)$ 
and consider the perturbing operator $Bf(x)=V(x)f(x)$ for some nonzero potential 
$V$, e.g.~$V \in C^\infty_c(\R^d)$.
We are interested in the operator
\[ \tilde T_1(t)f=\int_0^t T_0(t-s)BT_0(s)f ds. \]
Explicitly, we have
\begin{align*} 
[T_0(t-s)BT_0(s)f](x)&=V(e^{-2(t-s)}x)f(e^{-2t}x) \\
 &=[T_0(t-s)V](x)[T_0(t)f](x)
\end{align*}
and this yields
\[ \tilde T_1(t)f=T_0(t)f\int_0^t T_0(t-s)V ds \]
which is certainly \emph{not} norm continuous for all $t\geq 0$ since $t\mapsto T_0(t)$ is
not norm continuous at \emph{any} $t\geq 0$.
Similarly, one sees that the same is true for $\tilde T_k$ for any $k\in \N$.
Consequently, Theorem \ref{thm:abstract} does not apply to perturbed dilation operators, 
not even
if the perturbing potential is as ``nice'' as possible.

The situation for the Ornstein-Uhlenbeck semigroup is similar.
In this case we have
\begin{align*} 
[S_0&(t-s)BS_0(s)f](x) \\
&=\int_{\R^d}K_{t-s}(e^{-2(t-s)}x-y)V(y)[S_0(s)f](y)dy \\
&=\int_{\R^d}K_{t-s}(e^{-2(t-s)}x-y)V(y)\int_{\R^d}K_s(e^{-2s}y-y')f(y')dy' dy \\
&=e^{-2dt}\int_{\R^d}K_{t-s}(e^{-2(t-s)}(x-y))V(e^{-2(t-s)}y) \\
&\qquad \times \int_{\R^d}K_s(e^{-2t}(y-y'))f(e^{-2t}y')dy'
\end{align*}
and thus, for $\tilde S_1(t)f=\int_0^t S_0(t-s)BS_0(s)fds$, we find
\[ \tilde S_1(t)f =e^{-2dt}\int_0^t [T_0(t-s)K_{t-s} * T_0(t-s)V]
[T_0(t)K_s * T_0(t)f]ds. \]
One may now estimate the convolutions using Young's inequality.
However, the presence of the term $T_0(t)f$ leads to 
the same kind of norm discontinuity we encountered in the above discussion of the
dilation operator.
Consequently, we do not see how to apply Theorem \ref{thm:abstract} to the Ornstein-Uhlenbeck
operator $L$. This justifies our approach via explicit construction of the reduced resolvent.

\section{On the spectral inclusion $\sigma(L_0)\subset \sigma(L)$}
\label{sec:specL0}

\noindent By a perturbative argument one can in fact show that the addition of the
potential $V$ does not at all alter the spectrum in the left half-plane
$\{z\in \C: \Re z\leq d\}$.
The point is that the structure of the spectrum of $L_0$ is a consequence of 
the asymptotics of eigenfunctions as $r\to \infty$.
The requirements on the potential, on the other hand, imply 
that the addition of the potential does not change the asymptotic behavior
of eigenfunctions. 
Consequently, one has $\sigma(L_0)\subset \sigma(L)$.
The precise argument is as follows.

\subsection{Proof of Lemma \ref{lem:specL0}}
We use the adjoint operator
\[ L_0^* u(x)=\Delta u(x)+2 x \cdot \nabla u(x)+2d u(x) \]
with domain given in \cite{Met01} and $L^*=L_0^* u+\overline{V}u$.
It suffices to show that
\[ \{z\in \C: \Re z < d\} \subset \sigma_p(L^*). \]
In radial symmetry, the spectral equation $(\lambda-L^*)u=0$ reads
\begin{equation}
\label{eq:u}
 u''(r)+\tfrac{d-1}{r}u'(r)+2 r u'(r)-(\lambda-2d)u(r)=-\overline{V}(r)u(r). 
 \end{equation}
Upon setting $u(r)=r^{-\frac{d-1}{2}}e^{-r^2/2}v(r)$ we find the normal form equation
\begin{equation}
\label{eq:v}
 v''(r)-r^2 v(r)-(\lambda-d)v(r)=[O(r^{-2})-\overline{V}(r)]v(r). 
 \end{equation}
The ``homogeneous'' equation $ v''(r)-r^2 v(r)-(\lambda-d)v(r)=0$ has the fundamental system
\[ U(\tfrac{\lambda-d}{2},\sqrt 2 r),\qquad V(\tfrac{\lambda-d}{2},\sqrt 2 r) \]
where $U$ and $V$ are the standard parabolic cylinder functions (see \cite{NIST}).
We have the bounds
\[ |U(\tfrac{\lambda-d}{2}, \sqrt 2 r)|\simeq e^{-r^2/2}r^{-\frac{\Re\lambda-d+1}{2}},\quad
|V(\tfrac{\lambda-d}{2},\sqrt 2 r)|\simeq e^{r^2/2}r^{\frac{\Re\lambda-d-1}{2}} \]
for $r \gg 1$.
Now one may easily set up a Volterra iteration, see Section \ref{sec:out_Volt},
to treat the right-hand side in Eq.~\eqref{eq:v} perturbatively.
This yields a fundamental system $\{v_0,v_1\}$ of Eq.~\eqref{eq:v} which satisfies the same
bounds as $U(\frac{\lambda-d}{2},\sqrt 2 r)$ and $V(\frac{\lambda-d}{2},\sqrt 2 r)$.
As a consequence, one infers a fundamental system $\{u_0,u_1\}$ of Eq.~\eqref{eq:u}
with
\[ |u_0(r)|\simeq e^{-r^2}r^{-\frac{\Re \lambda}{2}},
\qquad |u_1(r)|\simeq r^{\frac{\Re \lambda}{2}-d} \]
for $r \gg 1$.
This shows that \emph{any} solution of Eq.~\eqref{eq:u} belongs to $L^2_{\mathrm{rad}}(\R^d)$
near infinity provided $\Re \lambda< d$.
Thus, by taking the solution of Eq.~\eqref{eq:u}
which is smooth at $r=0$, we obtain a radial eigenfunction of $L^*$ for any
$\lambda$ with $\Re \lambda<d$.
This proves $\{z\in \C: \Re z<d\}\subset \sigma_p(L^*)$, as desired.

\bibliography{ornuhl}

\def\cprime{$'$}
\begin{thebibliography}{10}

\bibitem{AreMetPal06}
Wolfgang Arendt, Giorgio Metafune, and Diego Pallara.
\newblock Schr\"odinger operators with unbounded drift.
\newblock {\em J. Operator Theory}, 55(1):185--211, 2006.

\bibitem{BatLiaXia05}
Charles J.~K. Batty, Jin Liang, and Ti-Jun Xiao.
\newblock On the spectral and growth bound of semigroups associated with
  hyperbolic equations.
\newblock {\em Adv. Math.}, 191(1):1--10, 2005.

\bibitem{BreNagPol00}
Simon Brendle, Rainer Nagel, and Jan Poland.
\newblock On the spectral mapping theorem for perturbed strongly continuous
  semigroups.
\newblock {\em Arch. Math. (Basel)}, 74(5):365--378, 2000.

\bibitem{DonKri13}
Roland Donninger and Joachim Krieger.
\newblock A vector field method on the distorted {F}ourier side and decay for
  wave equations with potentials.
\newblock {\em Preprint arXiv:1307.2392}, 2013.

\bibitem{Foi73}
Ciprian Foia{\c{s}}.
\newblock Sur une question de {M}. {R}eghi\c s.
\newblock {\em An. Univ. Timi\c soara Ser. \c Sti. Mat.}, 11:111--114, 1973.

\bibitem{ForRha13}
Simona Fornaro and Abdelaziz Rhandi.
\newblock On the {O}rnstein {U}hlenbeck operator perturbed by singular
  potentials in {$L^p$}-spaces.
\newblock {\em Discrete Contin. Dyn. Syst.}, 33(11-12):5049--5058, 2013.

\bibitem{Gear78}
Larry Gearhart.
\newblock Spectral theory for contraction semigroups on {H}ilbert space.
\newblock {\em Trans. Amer. Math. Soc.}, 236:385--394, 1978.

\bibitem{GesJonLatSta00}
F.~Gesztesy, C.~K. R.~T. Jones, Y.~Latushkin, and M.~Stanislavova.
\newblock A spectral mapping theorem and invariant manifolds for nonlinear
  {S}chr\"odinger equations.
\newblock {\em Indiana Univ. Math. J.}, 49(1):221--243, 2000.

\bibitem{HanHol10}
Harald Hanche-Olsen and Helge Holden.
\newblock The {K}olmogorov-{R}iesz compactness theorem.
\newblock {\em Expo. Math.}, 28(4):385--394, 2010.

\bibitem{HejKap86}
J.~Hejtmanek and Hans~G. Kaper.
\newblock Counterexample to the spectral mapping theorem for the exponential
  function.
\newblock {\em Proc. Amer. Math. Soc.}, 96(4):563--568, 1986.

\bibitem{Helffer2013}
Bernard Helffer.
\newblock {\em Spectral Theory and its Applications}, volume 139 of {\em
  Cambridge Studies in Advanced Mathematics}.
\newblock Cambridge University Press, 2013.

\bibitem{HieLorPruRhaSch09}
Matthias Hieber, Luca Lorenzi, Jan Pr{\"u}ss, Abdelaziz Rhandi, and Roland
  Schnaubelt.
\newblock Global properties of generalized {O}rnstein-{U}hlenbeck operators on
  {$L^p(\Bbb R^N,\Bbb R^N)$} with more than linearly growing coefficients.
\newblock {\em J. Math. Anal. Appl.}, 350(1):100--121, 2009.

\bibitem{HilPhi57}
Einar Hille and Ralph~S. Phillips.
\newblock {\em Functional analysis and semi-groups}.
\newblock American Mathematical Society Colloquium Publications, vol. 31.
  American Mathematical Society, Providence, R. I., 1957.
\newblock rev. ed.

\bibitem{KojYok10}
Takeshi Kojima and Tomomi Yokota.
\newblock Generation of analytic semigroups by generalized
  {O}rnstein-{U}hlenbeck operators with potentials.
\newblock {\em J. Math. Anal. Appl.}, 364(2):618--629, 2010.

\bibitem{Lic08}
Mark Lichtner.
\newblock Spectral mapping theorem for linear hyperbolic systems.
\newblock {\em Proc. Amer. Math. Soc.}, 136(6):2091--2101, 2008.

\bibitem{BerLor07}
Luca Lorenzi and Marcello Bertoldi.
\newblock {\em Analytical methods for {M}arkov semigroups}, volume 283 of {\em
  Pure and Applied Mathematics (Boca Raton)}.
\newblock Chapman \& Hall/CRC, Boca Raton, FL, 2007.

\bibitem{Lun97}
Alessandra Lunardi.
\newblock On the {O}rnstein-{U}hlenbeck operator in {$L^2$} spaces with respect
  to invariant measures.
\newblock {\em Trans. Amer. Math. Soc.}, 349(1):155--169, 1997.

\bibitem{Met01}
Giorgio Metafune.
\newblock {$L^p$}-spectrum of {O}rnstein-{U}hlenbeck operators.
\newblock {\em Ann. Scuola Norm. Sup. Pisa Cl. Sci. (4)}, 30(1):97--124, 2001.

\bibitem{MetOuhPal11}
Giorgio Metafune, El~Maati Ouhabaz, and Diego Pallara.
\newblock Long time behavior of heat kernels of operators with unbounded drift
  terms.
\newblock {\em J. Math. Anal. Appl.}, 377(1):170--179, 2011.

\bibitem{MetPalPri02}
Giorgio. Metafune, Diego Pallara, and Enrico Priola.
\newblock Spectrum of {O}rnstein-{U}hlenbeck operators in {$L^p$} spaces with
  respect to invariant measures.
\newblock {\em J. Funct. Anal.}, 196(1):40--60, 2002.

\bibitem{MetPruRhaSch02}
Giorgio Metafune, Jan Pr{\"u}ss, Abdelaziz Rhandi, and Roland Schnaubelt.
\newblock The domain of the {O}rnstein-{U}hlenbeck operator on an {$L^p$}-space
  with invariant measure.
\newblock {\em Ann. Sc. Norm. Super. Pisa Cl. Sci. (5)}, 1(2):471--485, 2002.

\bibitem{Olv97}
Frank W.~J. Olver.
\newblock {\em Asymptotics and special functions}.
\newblock AKP Classics. A K Peters, Ltd., Wellesley, MA, 1997.
\newblock Reprint of the 1974 original [Academic Press, New York; MR0435697 (55
  \#8655)].

\bibitem{NIST}
Frank W.~J. Olver, Daniel~W. Lozier, Ronald~F. Boisvert, and Charles~W. Clark,
  editors.
\newblock {\em N{IST} handbook of mathematical functions}.
\newblock U.S. Department of Commerce, National Institute of Standards and
  Technology, Washington, DC; Cambridge University Press, Cambridge, 2010.
\newblock With 1 CD-ROM (Windows, Macintosh and UNIX).

\bibitem{Pru84}
Jan Pr{\"u}ss.
\newblock On the spectrum of {$C_{0}$}-semigroups.
\newblock {\em Trans. Amer. Math. Soc.}, 284(2):847--857, 1984.

\bibitem{Ren94}
Michael Renardy.
\newblock On the linear stability of hyperbolic {PDE}s and viscoelastic flows.
\newblock {\em Z. Angew. Math. Phys.}, 45(6):854--865, 1994.

\bibitem{Ren96}
Michael Renardy.
\newblock Spectrally determined growth is generic.
\newblock {\em Proc. Amer. Math. Soc.}, 124(8):2451--2453, 1996.

\bibitem{SchSofSta10}
Wilhelm Schlag, Avy Soffer, and Wolfgang Staubach.
\newblock Decay for the wave and {S}chr\"odinger evolutions on manifolds with
  conical ends. {I}.
\newblock {\em Trans. Amer. Math. Soc.}, 362(1):19--52, 2010.

\bibitem{StuerzerArnold2014}
Dominik St{\"u}rzer and Anton Arnold.
\newblock Spectral analysis and long-time behaviour of a {F}okker-{P}lanck
  equation with a non-local perturbation.
\newblock {\em Rend. Mat. Acc. Lincei}, 25(1):53--89, 2014.

\bibitem{Zab75}
Jerzy Zabczyk.
\newblock A note on {$C_{0}$}-semigroups.
\newblock {\em Bull. Acad. Polon. Sci. S\'er. Sci. Math. Astronom. Phys.},
  23(8):895--898, 1975.

\end{thebibliography}
\bibliographystyle{plain}


\end{document}